\theoremstyle{thmstyleone}%
\newtheorem{theorem}{Theorem}
\newtheorem{lemma}[theorem]{Lemma}
\newtheorem{corollary}[theorem]{Corollary}
\newtheorem{proposition}[theorem]{Proposition}%
\theoremstyle{thmstyletwo}%
\newtheorem{remark}{Remark}%
\theoremstyle{thmstylethree}%
\newtheorem{definition}{Definition}%
\renewcommand{\>}{\rangle}
\begin{document}

\title[Spectral Theory of Krein-Feller Type Operators and Applications in Stochastic Fractional Elliptic and Parabolic Equations]{Spectral Theory of Krein-Feller Type Operators and Applications in Stochastic Fractional Elliptic and Parabolic Equations}

\author*[1,2]{\fnm{Kelvin} \sur{J.R. Almeida-Sousa}\orcidlink{0000-0002-9644-9819}}\email{kelvinjhonson.silva@kaust.edu.sa}

\author[1]{\fnm{Alexandre} \sur{B. Simas}\orcidlink{0000-0003-2562-2829}}\email{alexandre.simas@kaust.edu.sa}
\equalcont{These authors contributed equally to this work.}


\affil*[1]{\orgdiv{Statistics Program, Computer, Electrical and Mathematical Science and Engineering Division}, \orgname{King Abdullah University of Science and Technology (KAUST)}, \orgaddress{\city{Thuwal}, \postcode{23955-6900}, \country{Saudi Arabia}}}

\affil[2]{\orgdiv{Department of Mathematics}, \orgname{Universidade Federal da Paraíba (UFPB)}, \orgaddress{\city{João Pessoa}, \postcode{58059-900}, \state{Paraíba}, \country{Brazil}}}



\abstract{It has been shown that the space $C^{\infty}_{W,V}(\mathbb{T})$, introduced in Simas and Sousa (Potential Analysis, 2025), is the natural regularity space for solutions of the eigenvalue problem $\Delta_{W,V} u = \lambda u$ on the torus $\mathbb{T}$, where $\Delta_{W,V} = \frac{d^{+}}{dV}\frac{d^{-}}{dW}$ is the Krein Feller operator in the case where $W$ and $V$ are strictly increasing and right continuous (respectively left continuous), possibly with dense sets of discontinuities. In this work we provide conditions ensuring that every function in $C^{\infty}_{W,V}(\mathbb{T})$, which may be highly discontinuous, admits a series expansion that generalizes the classical Taylor expansion. A central feature of our approach is that all proofs are nonstandard, since classical analytical and spectral arguments cannot be adapted to this singular setting. Using these methods we characterize the eigenvectors of $\Delta_{W,V}$ in terms of generalized trigonometric functions and obtain an asymptotic lower bound for the associated eigenvalues. We also derive a sharp upper bound for the convergence exponent of these eigenvalues, and as a consequence we prove that $C^{\infty}_{W,V}(\mathbb{T})$ is a nuclear space. Further consequences include results on the asymptotic behavior of eigenvalues of compact operators and improvements in traceability. As a final application we establish existence results for generalized fractional stochastic and deterministic differential equations, as well as for parabolic stochastic partial differential equations acting on nuclear spaces.}


\keywords{Generalized derivatives,\, Sobolev spaces, \, Spectral theory, \, Stochastic partial differential equations, \, Nuclear spaces}
\pacs[MSC 2020]{47A75, 46E35, 35R11, 60H15, 47B10}


\maketitle

\section{Introduction}

The earliest practical use of generalized second-order operators is attributed to Feller~\cite{feller}. Following this seminal work, a wide range of developments have emerged in both theoretical and applied domains, leading to the widespread adoption of such operators in various fields. For example, the Krein–Feller operator $\frac{d}{dW}\left(\frac{d}{dx}\right)$ models the vibration of a string with non-uniform mass, where $W$ represents the cumulative mass distribution along the string (see, e.g., \cite{dym2008gaussian,Kats_1994}). More generally, when $W$ and $V$ induce non-atomic measures on an interval, the so-called geometric Krein–Feller operator $\frac{d}{dV}\left(\frac{d}{dW}\right)$ arises as a natural generalization of the Laplacian. This operator provides a robust framework for defining differential operators on irregular structures such as fractal sets. For instance, this occurs when $V(x) = x$ and $W$ is the distribution function of the Cantor measure on the interval $[0,1]$ (see, e.g., \cite{trig, uta, Freiberg_2005}). Geometric second-order operators of this type are also deeply connected with the generators of strongly continuous Markov processes and certain classes of inhomogeneous Markov processes (see, e.g., \cite{dynkin1965markov, keale}). The generalized derivative operator $D_{W}$ is also known in the literature as the Riemann–Stieltjes derivative, and several classical results from the theory of ordinary differential equations and real analysis have been extended to this setting. These developments highlight the important role played by first-order generalized differential operators (see, e.g., \cite{marcia, pouso}).

\subsection{A Basic Convention and the Space $C^{\infty}_{W,V}(\mathbb{T})$}

Throughout the literature, various hypotheses on $W$ have been considered in order to define $D_{W}$, and for each such choice, the local definition of $D_{W}$ may differ. However, to define this derivative operator on a broad class of functions, $D_{W}$ essentially reduces to the Radon–Nikodym derivative with respect to the measure $dW$ induced by $W$, which is fundamentally a non-local notion of derivative, despite the $dW$-almost everywhere existence of ``local'' derivatives (we use quotation marks because such derivatives may not exist at every point). The choice of a suitable local definition of $D_{W}$ and the regularity of $W$ can provide a balance between the complexity of the local structure and the theoretical tools available for the non-local version of $ D_{W} $. In this work, we adopt the local versions considered in \cite{keale}, where $W$ (resp. $ V $) is strictly increasing and right-continuous (resp. left-continuous) at every point of the domain. The local version of $D_{W}$ (resp. $D_{V}$) is denoted by $D^{-}_{W}$ (resp. $D^{+}_{V}$), the $W$-left-derivative (resp. $V$-right-derivative). For practical reasons and to emphasize certain advantages, we also use $ D^{-}_{W} $ (resp. $ D^{+}_{V} $) to denote the non-local version, referring to it as the weak $W$-left-derivative (resp. weak $V$-right-derivative).

Following this convention, the authors in \cite{keale} showed that the energetic space $H_{W,V}(\mathbb{T})$ (in the sense of Zeidler \cite{zeidler}) associated with the operator $\Delta_{W,V}:=D_{V}^{+}D_{W}^{-}$ can be viewed as a generalized Sobolev space on $\mathbb{T}$, which recovers the classical space $H^{1}(\mathbb{T})$ when $W(x)=V(x)=x$. The characterization of $H^{1}_{W,V}(\mathbb{T})$ as a generalized Sobolev space was made possible by introducing the space of test functions $C^{\infty}_{V,W}(\mathbb{T})$, which turns out to be the natural space where solutions to the problem
\begin{equation}\label{eigeneq}
	\begin{cases}
		-\Delta_{V,W}u=\lambda u;\\
		u\in \mathcal{D}_{V,W}(\mathbb{T}).
	\end{cases}
\end{equation}
belong. In (\ref{eigeneq}), the space $\mathcal{D}_{V,W}(\mathbb{T})\subset L^{2}_{V}(\mathbb{T})$ is the domain of $-D^{-}_{W}D^{+}_{V}$, which is the formal adjoint of the operator $\Delta_{W,V}$. Under the usual assumptions on $V$ and $W$, both $C^{\infty}_{W,V}(\mathbb{T})$ and $C^{\infty}_{V,W}(\mathbb{T})$ generalize the classical space $C^{\infty}(\mathbb{T})$ of test functions.

\subsection{Generalized Taylor Expansion in $C^{\infty}_{W,V}(\mathbb{T})$}

The first goal of this paper is to exhibit a class of elementary functions in $C^{\infty}_{W,V}(\mathbb{T})$ that admit a series representation which generalizes the classical Taylor expansion. More precisely, we derive sufficient conditions for a function $f\in C^{\infty}_{W,V}(\mathbb{T})$ to be represented as 
\begin{equation}\label{gen_taylor}
	f(x)=f(0)+\sum_{k=1}^{\infty} D^{(k)}_{W,V}f(0)F_{k}(x),
\end{equation}
where the series converges uniformly on $\mathbb{T}$, $D^{(k)}_{W,V}$ is the local high-order one-sided derivative operator, and $F_{k}(x):=F_{k}(x,x)$, with $F_{1}(x,s)=W(s)$ and 
\begin{equation*}
	F_{n}(x,s)=\begin{cases}
		\displaystyle\int_{[0,s)}\left[F_{n-1}(x,x)-F_{n-1}(x,\xi)\right]dV(\xi),\quad \text{n even};\\
		\displaystyle\int_{(0,s]}\left[F_{n-1}(x,x)-F_{n-1}(x,\xi)\right]dW(\xi),\quad \text{n odd}.
	\end{cases}
\end{equation*}
Note that the expansion above is centered at the origin of $\mathbb{T}$. This is intentional, as any point on $\mathbb{T}$ could serve as the origin, and thus we refer to it as a generalized Maclaurin expansion. Moreover, the choice of the torus $\mathbb{T}$ is not restrictive: the expansion can be formulated on any interval of $\mathbb{R}$. In particular, our result on the uniform convergence of the series in \eqref{gen_taylor} holds uniformly on compact subsets of $\mathbb{R}$ (see Theorem \ref{analyticalVW}). We emphasize that when $V(x) = W(x) = x$, we recover $F_{k}(x,x) = \frac{x^{k}}{k!}$, so the series reduces to the classical Maclaurin expansion. However, establishing conditions for the uniform convergence of the series in \eqref{gen_taylor} is much more subtle than in the classical case, since $W$ and $V$ may be highly discontinuous. Consequently, estimating the term $F_{k}(x,s)$ is a nontrivial problem, as there is no explicit formula available.

Beyond providing explicit examples of functions in $C^{\infty}_{W,V}(\mathbb{T})$ that can be effectively computed through ``elementary" terms $F_{k}$, this analytical representation yields several important consequences. In particular, it allows us to define the functions $C_{W,V}(\alpha, x)$, $S_{W,V}(\alpha, x)$, $C_{V,W}(\alpha, x)$, and $S_{V,W}(\alpha, x)$ for $\alpha \in \mathbb{R}$ and $x\in\mathbb{T}$ (see Section \ref{sect42}), which, for example, characterize any non-trivial solution $u$ of \eqref{eigeneq} as 
\begin{equation}\label{eigen_charact}
	u(x)=aC_{W,V}\left(\sqrt{\lambda},x\right)+\frac{b}{\sqrt{\lambda}}S_{W,V}\left(\sqrt{\lambda},x\right),
\end{equation}
for some $a,b\in\mathbb{R}$ with $(a,b)\neq (0,0)$ satisfying 
\begin{equation}\label{bnd_transposed}
	\begin{cases}
		a\left[C_{W,V}\left(\sqrt{\lambda},1\right)-1\right]+\frac{b}{\sqrt{\lambda}}S_{W,V}(\sqrt{\lambda},1)=0; \\
		\frac{a}{\sqrt{\lambda}}S_{V,W}(\sqrt{\lambda},1)-\frac{b}{\lambda}\left[C_{V,W}\left(\sqrt{\lambda},1\right)-1\right]=0,
	\end{cases}
\end{equation}
where the equality in \eqref{eigen_charact} holds up to a set of $dV$-measure zero on $\mathbb{T}$, and \eqref{bnd_transposed} reflects the cyclic boundary conditions. A basic consequence of this characterization is the recovery of regularity results for solutions of \eqref{eigeneq}.

\subsection{Spectral asymptotics of $\Delta_{W,V}$ and nuclearity of $C^{\infty}_{W,V}(\mathbb{T})$}

We have already emphasized that the space $C_{W,V}^{\infty}(\mathbb{T})$ is used as the test function space for defining weak generalized lateral derivatives. However, it is natural to ask to what extent this space plays the same foundational role as $C^{\infty}(\mathbb{T})$ in the classical theory. A core reason for the prominence of $C^{\infty}(\mathbb{T})$ is its nuclearity. If $C_{W,V}^{\infty}(\mathbb{T})$ shares this property, the two spaces may be considered theoretically indistinguishable within their respective frameworks. As we shall see in our section on applications, this property is crucial for determining the existence and uniqueness of solutions to evolution equations valued in the dual of a nuclear space. In the generalized setting, a natural approach is to construct nuclear spaces using high-order $W$-$V$ Sobolev spaces, denoted by $H^{n}_{W,V}(\mathbb{T}^d)$ for $n \in \mathbb{N}$, where $\mathbb{T}^d$ is the $d$-dimensional torus. As discussed in \cite{simasvalentim2} for the case $V(x)=x$, the intersection
\begin{equation*}
H^{\infty}_{W,V}(\mathbb{T}^d) := \bigcap_{n=1}^{\infty} H^{n}_{W,V}(\mathbb{T}^d)
\end{equation*}
forms a Fréchet space in which the regularity of the equation $L_{W,V} u = f$ is preserved; that is, $f \in H^{\infty}_{W,V}(\mathbb{T}^d)$ implies $u \in H^{\infty}_{W,V}(\mathbb{T}^d)$. In the same work, the authors modified the norms of $H^{n}_{W,V}$ to prove that $H^{\infty}_{W,V}$ becomes nuclear under a specific topology.

Our second objective in this work is to show that nuclearity holds for $H^{\infty}_{W,V}(\mathbb{T})$ with respect to the original norms of the high-order Sobolev spaces, without requiring the modifications introduced in \cite{simasvalentim2}. Furthermore, in the one-dimensional case, we have the identity $H^{\infty}_{W,V}(\mathbb{T}) = C_{W,V}^{\infty}(\mathbb{T})$. In short, our strategy to show that $C_{W,V}^{\infty}(\mathbb{T})$ is nuclear relies on establishing the existence of $0<\rho$ such that for every $s>\rho$,
\begin{equation}\label{s-series_conv}
    \sum_{n\in\mathbb{N}} \frac{1}{\lambda_{n}^{s}}<\infty,
\end{equation}
where $\{\lambda_n\}$ are the eigenvalues of the operator. For this purpose, it suffices to show that for some $s>0$, the series in \eqref{s-series_conv} converges. As we shall see, the relation \eqref{s-series_conv} arises as a consequence of the spectral asymptotics obtained for $\{\lambda_{n}\}_{n\in\mathbb{N}}$. More precisely, after characterizing the eigenvalues of \eqref{eigeneq} as roots of the entire function 
$$f(z)=\sum_{n\geq 1}(-1)^{n}z^{n}(F_{2n}(1,1)+G_{2n}(1,1))$$
we verify that for $0<\rho\le \frac{1}{2}$ the following asymptotic estimate holds:
\begin{equation}\label{assymtop_esti_proved}
    Cn^{\frac{1}{\rho}}\le \lambda_{n}.
\end{equation}
The upper bound for $\rho$, namely the value $\frac{1}{2}$ is sharp in the sense that it is achieved when $W(x)=V(x)=x$. It is important to emphasize that, as we will see, such sharp upper bound is a consequence of the following asymptotically sharp estimation:
\begin{equation}\label{tight_est}
	|F_{2n}(1,1)+G_{2n}(1,1)|\le\dfrac{C^{n}}{(n!)^2}.
\end{equation}

It is important to point out that we have not explicitly specified the value of $\rho$ in \eqref{assymtop_esti_proved}. However, as discussed in \cite{Freiberg_2005}, for the case where $W$ and $V$ are self-similar measures on $[0,1]$ (which differs from the main setting considered in this work), it is possible to explicitly determine a value of $\rho$ that is strictly less than $\frac{1}{2}$ for which \eqref{assymtop_esti_proved} holds for the operator $\Delta_{W,V}$ with Dirichlet (or Neumann) boundary conditions. While the result in \cite{Freiberg_2005} relies on the self-similarity property, our proof remains valid for self-similar measures under Dirichlet (or Neumann) boundary conditions. This is essentially due to the fact that the eigenvalues of $\Delta_{W,V}$ can be characterized as the roots of an entire function, as established in \cite[Proposition 3.7]{trig} and our tight estimation given in \eqref{tight_est}.

We emphasize that our asymptotic results were obtained under the assumption that $(\Delta_{W,V})^{-1}$ is compact, without assuming any stronger properties. As a consequence, before establishing the convergence of \eqref{s-series_conv}, which holds in particular for $s > \frac{1}{2}$, there is no information available regarding the traceability of $(\Delta_{W,V})^{-1}$. To highlight this point and motivated by the approach taken in our proof, we present two abstract results: the first addresses the spectral asymptotics of compact operators whose eigenvalues are roots of entire functions, while the second concerns the improvement of traceability for trace-class operators.

Another reason why our asymptotic result in \eqref{assymtop_esti_proved} is nontrivial is related to other findings also presented in this work. As we shall see, when considering the generalized Laplacian with Dirichlet boundary conditions, $\Delta_{W,V,\mathcal{D}}$, it is not necessary to use \eqref{assymtop_esti_proved} to find some $s>0$ such that \eqref{s-series_conv} holds and, consequently, to show that $C^{\infty}_{W,V,\mathcal{D}}(\mathbb{T}):=\bigcap_{n=1}^{\infty} H^{n}_{W,V,\mathcal{D}}(\mathbb{T})\subset C^\infty_{W,V}(\mathbb{T})$ is a nuclear space. More precisely, we show that $(-\Delta_{W,V,\mathcal{D}})^{-1}$ is a trace-class operator in $L^{2}_{V}(\mathbb{T})$ with kernel
\begin{equation}\label{kernel_dirich}
   \varrho_{W,0}(s,t)=W(t\wedge s) - \frac{W(t)W(s)}{W(1)}.
\end{equation}
As a consequence, in this case, the relation \eqref{s-series_conv} holds for the eigenvalues of $-\Delta_{W,V,\mathcal{D}}$ when $s=1$. However, this is not sufficient to conclude that the eigenvalues of $\Delta_{W,V}$ satisfy \eqref{s-series_conv} with $s=1$. Indeed, by the Courant min-max principle, if $\lambda_{i,\mathcal{D}}$ denotes the eigenvalues of $\Delta_{W,V,\mathcal{D}}$, we only have $\lambda_{i}\le \lambda_{i,\mathcal{D}}$. In particular, we cannot use the trace-class property in the Dirichlet case to deduce the nuclearity of $C^\infty_{W,V}(\mathbb{T})$. This demonstrates that the relations \eqref{s-series_conv} and \eqref{assymtop_esti_proved} are relevant. Moreover, as a consequence of $\lambda_{i}\le \lambda_{i,\mathcal{D}}$ and \eqref{assymtop_esti_proved}, we have $Cn^{\frac{1}{\rho}}<\lambda_{i,\mathcal{D}}$, which shows that $(-\Delta_{W,V,\mathcal{D}})^{-1}$ and its $s$-fractional power $(-\Delta_{W,V,\mathcal{D}})^{-s}$, defined via the classical Borel functional calculus for every $s>\rho$, are trace-class operators. This fact will be important in our applications section, where we determine the range of parameters for which generalized Whittle–Matérn type equations have well-defined solutions in $L^{2}(\Omega;L^{2}_{V}(\mathbb{T}))$ and also establish the regularity of their sample paths.

\subsection{Applications}
After introducing the higher-order and fractional $W$-$V$-Sobolev spaces $H^{s}_{W,V}(\mathbb{T})$ for $s>0$, we prove Sobolev-type embeddings as well as density of the space $C^{\infty}_{W,V}(\mathbb{T})$ in these spaces. Using our asymptotic results, we establish the nuclearity of $C^{\infty}_{W,V}(\mathbb{T})$. We then present a regularity result for the equation
\begin{equation}\label{eq:reg_whittle_matern}
	(I-\Delta_{W,V})^{s}u = f,
\end{equation}
showing in particular that, if the source term $f$ belongs to $C^{\infty}_{W,V}(\mathbb{T})$, then the solution $u$ lies in $C^{\infty}_{W,V}(\mathbb{T})$.

Additionally, we provide a regularity result for the stochastic Whittle–Matérn-type equation
\begin{equation}\label{eq:reg_whittle_matern_stoch}
	(\kappa^2 I - \Delta_{W,V})^\beta u = \dot{B}_V,
\end{equation}
where $\dot{B}_V$ denotes Gaussian white noise on $L^{2}_{V}(\mathbb{T})$ and $\kappa$ is a bounded and bounded away from zero function on $\mathbb{T}$. Specifically, we show that \eqref{eq:reg_whittle_matern_stoch} is well-posed as a process with sample paths in $L^{2}_{V}(\mathbb{T})$ if and only if $2\beta > \rho$. This highlights the significance of the upper bound for $\rho$ established in \eqref{assymtop_esti_proved}.

Finally, and importantly, we apply the results of Perez-Abreu and Kallianpur \cite{kallianpur_1987} to state a result on the existence and uniqueness of solutions to time-dependent stochastic partial differential equations valued in the dual of a nuclear space. More precisely, we discuss the existence and uniqueness of solutions to the equation
$$dY_t = \alpha \Delta_{W,V}' Y_t \, dt + \beta \, dN_t,$$
where $N_t$ is a mean-zero martingale taking values in the new space of distributions $D^{\prime}_{W,V}(\mathbb{T}) := \left(C^{\infty}_{W,V}(\mathbb{T})\right)^{\prime}$, and $\alpha$ and $\beta$ are positive constants. At this point, the nuclearity of $C^{\infty}_{W,V}(\mathbb{T})$ plays a crucial role in the discussion.

\subsection{Organization of the Manuscript}

The remainder of the paper is organized as follows. After this introduction, Section~\ref{sec:review} provides a review of $W$-$V$-Sobolev spaces and the generalized Laplacian. Section~\ref{sec:maclaurin} develops the generalized Maclaurin expansion and explores its consequences for the spectral theory of the $W$-$V$-Laplacian. Section~\ref{sect42} characterizes the eigenvectors of the $W$-$V$-Laplacian using generalized trigonometric functions. Section~\ref{sec:trace} presents preliminary results and investigates the traceability of the generalized Laplacian under Dirichlet boundary conditions. Section~\ref{sec:asymptotic_eigenvalues} establishes sharp asymptotic lower bounds for the eigenvalues. Section~\ref{sec:applications} discusses several applications of the theoretical results, including higher-order $W$-$V$-Sobolev spaces, the nuclearity of the associated function spaces, regularity of solutions of fractional stochastic differential equations, and stochastic partial differential equations in this generalized framework. Finally, Appendix \ref{app:spde_kallianpur} contains a brief review of parabolic stochastic partial differential equations acting on nuclear spaces.

\section{Review of $W$-$V$-Sobolev Spaces and the Generalized Laplacian}\label{sec:review}

In this section, we briefly recall some definitions and results from \cite{keale} regarding $W$-$V$-Sobolev spaces. First, a function is called c\`adl\`ag (from the French, ``continue à droite, limite à gauche'') if it is right-continuous and has left limits. Similarly, a function is called c\`agl\`ad if it is left-continuous and has right limits.

Fix two strictly increasing functions $W, V : \mathbb{T} \to \mathbb{T}$, with $W$ being c\`adl\`ag and $V$ being c\`agl\`ad. We also assume they satisfy the following periodic conditions:
\begin{equation}\label{eqsmedida}
	\forall x \in \mathbb{R},\quad 
	\begin{cases}
		W(x+1)-W(x)=W(1)-W(0);\\
		V(x+1)-V(x)=V(1)-V(0), 
	\end{cases}
\end{equation}
where, without loss of generality, we assume that both $W$ and $V$ are continuous at zero, i.e., $W(0-)=W(0)=0$ and $V(0+)=V(0)=0$. Indeed, since they are increasing, they can only have countably many points of discontinuity. If they are not continuous at zero, we can simply choose another point where both are continuous and translate the functions to make them continuous at zero. Note that by taking any compact interval $I$ in $\mathbb{R}$ of length $1$, we can define finite measures $dV$ and $dW$ on $\mathbb{T}$ by $dW((a,b])=W(b)-W(a)$ and $dV([a,b))=V(b)-V(a)$ for $a, b \in I$, $a < b$. Thus, $dW$ and $dV$ are uniquely determined by their values on the families $\{(a,b]: a < b, a, b \in I\}$ and $\{[a,b): a < b, a, b \in I\}$, respectively, and both these classes generate the Borel $\sigma$-algebra of $\mathbb{T}_I \cong \mathbb{T}$, where $\mathbb{T}_I$ is the torus obtained by identifying the boundary points of $I$.

Note that in the above definition, we allow the measures induced by $W$ and $V$ to have atoms. This is a weaker assumption than some commonly found in the literature, e.g., \cite{trig}, \cite{uta}, among others.

We denote the $L^2$ space with the measure induced by $V$ by $L^2_V(\mathbb{T})$, with its norm (resp. inner product) denoted by $\|\cdot\|_V$ (resp. $\langle \cdot, \cdot \rangle_V$). Similarly, the $L^2$ space with the measure induced by $W$ is denoted by $L^2_W(\mathbb{T})$, with norm (resp. inner product) $\|\cdot\|_W$ (resp. $\langle \cdot, \cdot \rangle_W$). The subspace of $L^2_V(\mathbb{T})$ (resp. $L^2_W(\mathbb{T})$) consisting of functions $f$ such that $\int_{\mathbb{T}} f dV = 0$ (resp. $\int_{\mathbb{T}} f dW = 0$) is denoted by $L^2_{V,0}(\mathbb{T})$ (resp. $L^2_{W,0}(\mathbb{T})$).
	
\begin{definition}\label{defgendif2}
A function $f : \mathbb{T} \to \mathbb{R}$ is said to be $W$-left differentiable if, for every $x \in \mathbb{T}$, the limit
$$D^{-}_{W}f(x):=\lim_{h\to 0^{-}}\frac{f(x+h)-f(x)}{W(x+h)-W(x)}$$
exists. Similarly, a function $g : \mathbb{T} \to \mathbb{R}$ is $V$-right differentiable if the limit
$$D^{+}_{V}g(x):=\lim_{h\to 0^{+}}\frac{g(x+h)-g(x)}{V(x+h)-V(x)}$$
exists for all $x \in \mathbb{T}$.
\end{definition}

Define the sets
$$C_{0}(\mathbb{T}):=\left\{f:\mathbb{T}\to\mathbb{R} : f\ \text{is c\`adl\`ag},\ \int_{\mathbb{T}} f dV = 0\right\},$$
$$C_{1}(\mathbb{T}):=\left\{f:\mathbb{T}\to\mathbb{R} : f\ \text{is c\`agl\`ad},\ \int_{\mathbb{T}} f dW = 0\right\},$$
and
$$C^{n}_{W,V,0}(\mathbb{T}):=\left\{f\in C_{\sigma(n+1)}(\mathbb{T}) : D^{(n)}_{W,V}f\ \text{exists and}\ D^{(n)}_{W,V}f\in C_{\sigma(n)}(\mathbb{T})\right\},$$
where
\begin{equation}\label{nderiv}
D^{(n)}_{W,V}:=\left\{
\begin{array}{ll}
\underbrace{D^{-}_{W}D^{+}_{V}\cdots D^{-}_{W}}_{n\ \text{factors}}, & \text{if } n \text{ is odd}; \\
\underbrace{D^{-}_{V}D^{+}_{W}\cdots D^{-}_{W}}_{n\ \text{factors}}, & \text{if } n \text{ is even},
\end{array}
\right.
\end{equation}
and
\begin{equation*}
\sigma(n):=\left\{
\begin{array}{ll}
1, & \text{if } n \text{ is odd}; \\
0, & \text{if } n \text{ is even}.
\end{array}
\right.
\end{equation*}

The space of $W$-$V$-smooth functions, or test functions, is defined as
$$C^{\infty}_{W,V}(\mathbb{T}):=\langle 1 \rangle \oplus \left(\bigcap_{n=1}^{\infty} C^{n}_{W,V,0}\right).$$

These functions allow us to define weak derivatives:

\begin{definition}\label{defWleft}
A function $f \in L^{2}_{V}(\mathbb{T})$ has a $W$-left \textit{weak} derivative if and only if, for every $g \in C^{\infty}_{V,W}(\mathbb{T})$, there exists $F \in L^{2}_{W}(\mathbb{T})$ such that
\begin{equation}\label{defWd2}
	\int_{\mathbb{T}} f D_{V}^{+}g\, dV = -\int_{\mathbb{T}} F g\, dW.
\end{equation}
In this case, the $W$-left weak derivative of $f$ is denoted by $D_W^- f$. We use the same notation as for the strong lateral derivative, in view of Remark 9 of \cite{keale}.
\end{definition}

We can now combine Definition 4 and Theorem 5 from \cite{keale} to arrive at the following definition of $W$-$V$-Sobolev spaces.

\begin{definition}
The $W$-$V$-Sobolev space is the Hilbert space
$${H}_{W,V}(\mathbb{T}) = \{f \in L^{2}_{V}(\mathbb{T}) : f\ \text{has a $W$-left weak derivative}\},$$
with norm
$$\|f\|^{2}_{W,V} = \|f\|^{2}_{V} + \|D_{W}^{-}f\|^{2}_{W}.$$
\end{definition}

It is noteworthy that our set of smooth functions, $C^{\infty}_{V,W}(\mathbb{T})$, is dense in $H_{W,V}(\mathbb{T})$. Indeed, the following is \cite[Proposition 2]{keale}:
\begin{proposition}\label{densenesscinfinity2}
The space $C^{\infty}_{V,W}(\mathbb{T})$ is dense in $L^2_V(\mathbb{T})$. Furthermore, $\left\{D^{+}_{V}g : g \in C^{\infty}_{V,W}(\mathbb{T})\right\}$ is dense in $L^2_{V,0}(\mathbb{T})$.
\end{proposition}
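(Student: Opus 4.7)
The plan is to establish density in $L^2_V(\mathbb{T})$ first, then obtain the second statement via a generalized antiderivative construction.

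For the first assertion, I would argue by duality. Suppose $h \in L^2_V(\mathbb{T})$ satisfies $\langle h, \varphi \rangle_V = 0$ for every $\varphi \in C^\infty_{V,W}(\mathbb{T})$. Taking $\varphi \equiv 1 \in C^\infty_{V,W}(\mathbb{T})$ gives $\int_\mathbb{T} h\,dV = 0$. To conclude $h = 0$ $dV$-a.e., it suffices to approximate in $L^2_V$-norm each indicator $\chi_{[a,b)}$, with $a$ and $b$ at continuity points of $V$, by elements of $C^\infty_{V,W}(\mathbb{T})$, since finite linear combinations of such indicators are $L^2_V$-dense. I would construct the approximants by starting from piecewise-constant bump-type profiles that rise near $a$ and fall near $b$, then repeatedly performing generalized antiderivatives against $dV$ and $dW$ to raise the regularity to arbitrary order, inserting constants at each step so that the c\`adl\`ag/c\`agl\`ad condition and the vanishing mean defining each $C^n_{V,W,0}(\mathbb{T})$ are satisfied.

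For the second assertion, apply the first part: given $h \in L^2_{V,0}(\mathbb{T})$, choose $\varphi_n \in C^\infty_{V,W}(\mathbb{T})$ with $\varphi_n \to h$ in $L^2_V$. Decompose $\varphi_n = c_n + \psi_n$ with $c_n \in \mathbb{R}$ and $\psi_n \in \bigcap_k C^k_{V,W,0}(\mathbb{T})$. From $\int_\mathbb{T} h\,dV = 0$ one gets $c_n V(\mathbb{T}) = \int_\mathbb{T} \varphi_n\,dV \to 0$, so $\psi_n \to h$ in $L^2_V$. Set
\[
g_n(x) := \int_{[0,x)} \psi_n\,dV.
\]
The mean-zero condition $\int_\mathbb{T} \psi_n\,dV = 0$ ensures $g_n(0) = g_n(1)$, so $g_n$ is well-defined on $\mathbb{T}$, and the definition of $D^+_V$ gives $D^+_V g_n = \psi_n$ $dV$-a.e. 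Since the higher-order generalized derivatives of $g_n$ coincide, up to a shift in the index and in the c\`adl\`ag/c\`agl\`ad alternation, with the generalized derivatives of $\psi_n$, the infinite smoothness of $\psi_n$ transfers to $g_n$, placing $g_n \in C^\infty_{V,W}(\mathbb{T})$. Therefore $D^+_V g_n = \psi_n \to h$ in $L^2_V$, which is what we need.

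The most delicate step is the explicit construction of smooth approximants in the first part. The classical mollification argument is unavailable because $dV$ and $dW$ may carry dense atomic components, and one has to build the approximants by hand via iterated generalized integrations, keeping careful track of the c\`adl\`ag/c\`agl\`ad regularity and of the mean-zero constraints at every order of the iteration so that membership in each $C^n_{V,W,0}(\mathbb{T})$ is preserved.
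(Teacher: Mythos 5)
First, note that the paper does not prove this proposition at all: it is quoted verbatim from \cite{keale} (their Proposition 2), so there is no in-paper argument to compare yours with. Judging your proposal on its own terms, there are genuine gaps in both halves.

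For the first assertion, the duality framing is redundant (if you can approximate every indicator $\chi_{[a,b)}$ in $L^2_V$ by test functions, density follows at once), and the only substantive step --- actually producing elements of $C^{\infty}_{V,W}(\mathbb{T})$ close to $\chi_{[a,b)}$ --- is not carried out. ``Repeatedly performing generalized antiderivatives to raise the regularity to arbitrary order'' cannot work as described: each integration replaces the function by its antiderivative, so after $n$ steps you are close to the $n$-fold iterated integral of the bump, not to the indicator; finitely many integrations yield only finitely many derivatives; and the top of the derivative chain is always the original piecewise-constant seed, which in general is not laterally differentiable at its jumps (the quotient $\frac{f(x+h)-f(x)}{W(x+h)-W(x)}$ diverges at a jump of $f$ unless $W$ happens to have an atom exactly there). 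Membership in $C^{\infty}_{V,W}(\mathbb{T})$ requires the whole infinite alternating chain of derivatives to exist at every point, with the prescribed c\`adl\`ag/c\`agl\`ad type and zero-mean constraints at each level; in the presence of dense atoms this is exactly the hard content of the proposition, and your sketch supplies no mechanism for it.

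For the second assertion there are two concrete errors. First, the mean-zero part of $C^{\infty}_{V,W}(\mathbb{T})$ carries the constraint $\int_{\mathbb{T}}\psi_n\,dW=0$, not $\int_{\mathbb{T}}\psi_n\,dV=0$ (the roles of $W$ and $V$ are swapped, cf.\ Remark~\ref{orderVW2}), so neither $c_nV(\mathbb{T})=\int_{\mathbb{T}}\varphi_n\,dV$ nor the periodicity of $g_n$ follows as claimed; this is repairable by subtracting the $dV$-average of $\varphi_n$ instead. Second, and fatally, your index-shift claim runs in the wrong direction: if $D^{+}_{V}g_n=\psi_n$, then membership of $g_n$ in $C^{\infty}_{V,W}(\mathbb{T})$ requires the existence of $D^{-}_{W}\psi_n$, $D^{+}_{V}D^{-}_{W}\psi_n,\ldots$, i.e.\ smoothness of $\psi_n$ in the $W,V$-sense (chain beginning with $D^{-}_{W}$), whereas $\psi_n\in C^{\infty}_{V,W}(\mathbb{T})$ only gives the chain beginning with $D^{+}_{V}$. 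Moreover the first derivative of an element of $C^{\infty}_{V,W}(\mathbb{T})$ must be c\`adl\`ag with vanishing $dV$-integral, while $\psi_n$ is c\`agl\`ad, so $g_n\notin C^{\infty}_{V,W}(\mathbb{T})$ whenever $\psi_n$ has a genuine jump. The integration idea is the right one, but it must start from approximants taken in the mean-zero part of $C^{\infty}_{W,V}(\mathbb{T})$ (c\`adl\`ag, with the chain beginning at $D^{-}_{W}$), whose density in $L^{2}_{V,0}(\mathbb{T})$ is a companion fact that your first part, which concerns $C^{\infty}_{V,W}(\mathbb{T})$, does not provide.
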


We also have a characterization of the $W$-$V$-Sobolev spaces that can be seen as a counterpart to the result that any function in $H^1(\mathbb{T})$ is absolutely continuous. More precisely, the following is a direct consequence of \cite[Theorem 2]{keale}.
\begin{theorem}\label{sobchar2}
A function $f \in L^{2}_{V}(\mathbb{T})$ belongs to $H_{W,V}(\mathbb{T})$ if and only if there exists $F \in L^{2}_{W,0}(\mathbb{T})$ such that
$$
	f(x) = f(0) + \int_{(0,x]} D_W^- F(s)\, dW(s),
$$
for $V$-almost every $x$.
\end{theorem}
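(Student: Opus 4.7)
The plan is to establish both implications directly from Definition \ref{defWleft} together with the density result in Proposition \ref{densenesscinfinity2}. The approach mirrors the classical proof that $H^{1}(\mathbb{T})$ functions are absolutely continuous, but careful bookkeeping is required to accommodate the one-sided nature of the derivatives and the possibly dense discontinuity sets of $W$ and $V$.

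For the forward direction, I would start with $f \in H_{W,V}(\mathbb{T})$ and set $F := D_{W}^{-}f \in L^{2}_{W}(\mathbb{T})$. Testing the weak-derivative identity \eqref{defWd2} against the constant test function $g \equiv 1 \in C^{\infty}_{V,W}(\mathbb{T})$, for which $D_{V}^{+}g \equiv 0$, immediately yields $\int_{\mathbb{T}} F\, dW = 0$, so $F \in L^{2}_{W,0}(\mathbb{T})$. I would then introduce the candidate
$$\tilde{f}(x) := c + \int_{(0,x]} F(s)\, dW(s)$$
for a constant $c$ to be fixed later, and compare $\tilde{f}$ with $f$ by pairing each against an arbitrary $g \in C^{\infty}_{V,W}(\mathbb{T})$. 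A single Fubini swap, combined with the cyclic condition $g(0)=g(1)$ and the zero-mean property of $F$, produces $\int_{\mathbb{T}} \tilde{f}\, D_{V}^{+}g\, dV = -\int_{\mathbb{T}} F g\, dW = \int_{\mathbb{T}} f\, D_{V}^{+}g\, dV$. Invoking the density of $\{D_{V}^{+}g : g \in C^{\infty}_{V,W}(\mathbb{T})\}$ in $L^{2}_{V,0}(\mathbb{T})$ supplied by Proposition \ref{densenesscinfinity2}, one concludes that $\tilde{f}-f$ is constant $V$-a.e.; choosing $c$ to absorb this constant makes $\tilde{f}=f$ $V$-a.e. and identifies $c$ as the value $f(0)$ that appears in the stated formula.

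For the reverse direction, I would simply reverse the Fubini calculation: given the integral representation and an arbitrary $g \in C^{\infty}_{V,W}(\mathbb{T})$, substituting into $\int_{\mathbb{T}} f\, D_{V}^{+}g\, dV$ and swapping the order of integration yields $-\int_{\mathbb{T}} F g\, dW$. This is precisely the defining identity \eqref{defWd2} for $f$ to possess a weak $W$-left derivative equal to $F \in L^{2}_{W}(\mathbb{T})$, whence $f \in H_{W,V}(\mathbb{T})$.

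The main obstacle is the careful handling of the endpoint evaluations $\int_{[s,1)} D_{V}^{+}g\, dV = g(1)-g(s)$ and $\int_{\mathbb{T}} D_{V}^{+}g\, dV = 0$ when $V$ (or $W$) carries atoms at $s$, $0$, or $1$: these rely on the c\`agl\`ad version of the fundamental theorem of calculus implicit in the construction of $C^{\infty}_{V,W}(\mathbb{T})$, and the Fubini exchange must be applied with matching one-sided conventions so that the boundary contributions cancel cleanly rather than leaving ghost terms supported on atoms. A secondary subtlety is that the equality is stated only $V$-a.e.: the natural object $\tilde{f}$ is the canonical c\`adl\`ag representative of $f$, so pointwise agreement may fail at atoms of $dW$ which are $V$-null, but this does not affect the stated $V$-a.e. conclusion.
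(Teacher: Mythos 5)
The paper does not actually prove this statement: it is imported verbatim as ``a direct consequence of [Theorem 2, keale]'' (Simas--Sousa), so there is no internal proof to compare against. Your argument supplies a self-contained proof and it is essentially correct, following the route one would expect that citation to encode: take $F=D_W^-f$, test against $g\equiv 1\in C^{\infty}_{V,W}(\mathbb{T})$ to get $\int_{\mathbb{T}}F\,dW=0$, compare $f$ with $\tilde f(x)=c+\int_{(0,x]}F\,dW$ by a Fubini exchange plus the c\`agl\`ad fundamental theorem of calculus $\int_{[s,1)}D_V^+g\,dV=g(1)-g(s)$ (the $V$-right analogue of the FTC stated in Section~\ref{sec:review}, via Remark~\ref{orderVW2}), and then use the density of $\{D_V^+g\}$ in $L^2_{V,0}(\mathbb{T})$ from Proposition~\ref{densenesscinfinity2} to conclude $\tilde f-f$ is $V$-a.e.\ constant; the converse is the same computation run backwards, where the zero mean of $F$ kills the boundary term. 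Your flagged subtleties are the right ones, and the Fubini step is legitimate since $F$ and $D_V^+g$ are square-integrable against finite measures. Two reconciliations with the statement as printed are worth making explicit. First, the formula in the theorem reads $\int_{(0,x]}D_W^-F(s)\,dW(s)$ with $F\in L^2_{W,0}(\mathbb{T})$, which does not typecheck (a generic element of $L^2_{W,0}$ has no $W$-left derivative); what you prove is the evidently intended statement, $f(x)=f(0)+\int_{(0,x]}F(s)\,dW(s)$ with $F=D_W^-f\in L^2_{W,0}(\mathbb{T})$, i.e.\ the integrand is the weak derivative itself. Second, as you note, ``$f(0)$'' only makes sense after replacing $f$ by the c\`adl\`ag representative $\tilde f$ (which your construction produces), since membership in $H_{W,V}(\mathbb{T})$ determines $f$ only up to $V$-null sets; with that convention the identification $c=f(0)$ is immediate because $\tilde f(0)=c$. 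With these two clarifications your proof is complete and consistent with the cited result.
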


Let us now introduce a generalization of the Laplacian. To this end, we first introduce a new space of functions, on which the Laplacian will be a self-adjoint operator.

\begin{definition}\label{defdomlap}
Let $\mathcal{D}_{W,V}(\mathbb{T})$ be the set of functions $f \in L^{2}_{W}(\mathbb{T})$ such that there exists $\mathfrak{f} \in L^{2}_{V,0}(\mathbb{T})$ satisfying
\begin{equation}\label{c422}
	f(x) = a + W(x) b + \int_{(0,x]} \int_{[0,y)} \mathfrak{f}(s)\, dV(s)\, dW(y),
\end{equation}
where $b$ satisfies the relation
$$b W(1) + \int_{(0,1]} \int_{[0,y)} \mathfrak{f}(s)\, dV(s)\, dW(y) = 0.$$
\end{definition}

We have the following integration by parts formula, whose proof can be found in \cite[Proposition 1]{keale}, involving functions in $\mathcal{D}_{W,V}(\mathbb{T})$ and ${H}_{W,V}(\mathbb{T})$.

\begin{proposition}\label{intbypartsprop2}
(Integration by parts formula) For every $f \in \mathcal{D}_{W,V}(\mathbb{T})$ and $g \in H_{W,V}(\mathbb{T})$, the following holds:
\begin{equation}\label{intbypartsVW2}
	\langle -\Delta_{W,V} f, g \rangle_{V} = \int_{\mathbb{T}} D^{-}_{W} f\, D^{-}_{W} g\, dW
\end{equation}
\end{proposition}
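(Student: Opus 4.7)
The plan is to exploit the explicit representations available for both $f$ and $g$ and then reduce the identity to an application of Fubini's theorem. First, for $f \in \mathcal{D}_{W,V}(\mathbb{T})$, I would differentiate the representation \eqref{c422} to read off
$$D^{-}_{W}f(x) = b + \int_{[0,x)} \mathfrak{f}(s)\, dV(s),$$
so that $D^{+}_{V}(D^{-}_{W}f)(x) = \mathfrak{f}(x)$ and hence $-\Delta_{W,V}f = -\mathfrak{f} \in L^{2}_{V,0}(\mathbb{T})$. The compatibility condition on $b$ is precisely what guarantees that the function $D^{-}_{W}f$ just identified has zero $dW$-mean, which will be used at the very end. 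For $g \in H_{W,V}(\mathbb{T})$, Theorem \ref{sobchar2} provides the representation
$$g(x) = g(0) + \int_{(0,x]} D^{-}_{W}g(s)\, dW(s),$$
which is valid for $V$-almost every $x \in \mathbb{T}$, with $D^{-}_{W}g \in L^{2}_{W,0}(\mathbb{T})$.

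Next I would substitute both representations into the left-hand side and split:
$$\langle -\Delta_{W,V}f, g\rangle_{V} = -g(0)\int_{\mathbb{T}} \mathfrak{f}\, dV - \int_{\mathbb{T}} \mathfrak{f}(x) \int_{(0,x]} D^{-}_{W}g(s)\, dW(s)\, dV(x).$$
The first term vanishes immediately since $\mathfrak{f} \in L^{2}_{V,0}(\mathbb{T})$. To deal with the double integral, Cauchy--Schwarz yields $\mathfrak{f} \otimes D^{-}_{W}g \in L^{1}(dV \otimes dW)$, which justifies an application of Fubini's theorem. Swapping the order of integration, the inner domain $\{(x,s) : 0 < s \le x\}$ is rewritten as $\{(x,s) : s \le x \le 1\}$, producing
$$-\int_{\mathbb{T}} D^{-}_{W}g(s)\, \Bigl(\int_{[s,1]} \mathfrak{f}(x)\, dV(x)\Bigr)\, dW(s).$$

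Finally, I would use the zero $dV$-mean of $\mathfrak{f}$ to write $\int_{[s,1]} \mathfrak{f}\, dV = -\int_{[0,s)} \mathfrak{f}\, dV = b - D^{-}_{W}f(s)$, which converts the previous expression into
$$\int_{\mathbb{T}} D^{-}_{W}f(s)\, D^{-}_{W}g(s)\, dW(s) - b \int_{\mathbb{T}} D^{-}_{W}g\, dW,$$
and the last integral vanishes because $D^{-}_{W}g \in L^{2}_{W,0}(\mathbb{T})$. This delivers \eqref{intbypartsVW2}. The main technical obstacle, in my view, is the careful bookkeeping required at the boundary of $\mathbb{T}$: because $W$ is càdlàg and $V$ is càglàd, the intervals $(0,x]$ versus $[0,x)$ cannot be swapped freely, and one must check that the Fubini rearrangement above uses the correct one-sided endpoints; the cyclic conditions in \eqref{eqsmedida} and the zero-mean constraints are exactly what close these gaps.
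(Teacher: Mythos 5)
Your argument is correct. Note that the paper itself does not prove Proposition \ref{intbypartsprop2}: it is quoted from \cite[Proposition 1]{keale}, so there is no in-document proof to compare against; judged on its own terms, your proof is a clean, self-contained derivation. Reading off $D^{-}_{W}f(x)=b+\int_{[0,x)}\mathfrak{f}\,dV$ from \eqref{c422} via the fundamental theorem of calculus (the inner function $y\mapsto\int_{[0,y)}\mathfrak{f}\,dV$ is c\`agl\`ad, so the FTC applies), killing the $g(0)$ term with $\mathfrak{f}\in L^{2}_{V,0}(\mathbb{T})$, applying Fubini on the finite product measure, and then using $\int_{[s,1)}\mathfrak{f}\,dV=-\int_{[0,s)}\mathfrak{f}\,dV=b-D^{-}_{W}f(s)$ is exactly the kind of direct computation the definitions are set up for, and the endpoint ambiguities you worry about ($[s,1]$ versus $[s,1)$, $x\le 1$ versus $x<1$) are harmless because $W$ and $V$ are assumed continuous at $0\equiv 1$, so neither $dW$ nor $dV$ charges that point. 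One small bookkeeping slip: the compatibility condition on $b$ (equivalently, the zero $dW$-mean of $D^{-}_{W}f$) is not what closes the argument at the end; what you actually use there is $\int_{\mathbb{T}}D^{-}_{W}g\,dW=0$, which comes from $D^{-}_{W}g\in L^{2}_{W,0}(\mathbb{T})$ in Theorem \ref{sobchar2} (or Lemma \ref{Wderivclosed}). The condition on $b$ enters only through the definition of $\mathcal{D}_{W,V}(\mathbb{T})$, ensuring $f$ is well defined on the torus, and plays no further role in the computation; rephrasing that one sentence would make the proof airtight as written.
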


We can now define the $W$-$V$-Laplacian:

\begin{definition}\label{laplacianWV2}
We define the $W$-$V$-Laplacian as $\Delta_{W,V}: \mathcal{D}_{W,V}(\mathbb{T}) \subseteq L^{2}_{V}(\mathbb{T}) \to L^{2}_{V}(\mathbb{T})$, given by $\Delta_{W,V} f = \mathfrak{f}$, where $\mathfrak{f}$ is defined by \eqref{c422}.
\end{definition}

From \cite[Definition 5]{keale}, we have that $\mathcal{D}_{W,V}(\mathbb{T})$ is the domain of the Friedrichs extension of $I - D^{+}_{V} D^{-}_{W}$, denoted by $\mathcal{A}$ (see Zeidler \cite[Section 5.5]{zeidler} for further details on Friedrichs extensions). Therefore, the formal $W$-$V$-Laplacian, $\Delta_{W,V}: \mathcal{D}_{W,V}(\mathbb{T}) \subseteq L^{2}_{V}(\mathbb{T}) \to L^{2}_{V}(\mathbb{T})$, is self-adjoint. Furthermore, by \cite[Theorem 3]{keale}, $(I - \Delta_{W,V})^{-1}$ is well-defined and compact. Therefore, there exists a complete orthonormal system of functions $(\nu_n)_{n \in \mathbb{N}}$ in $L^{2}_{V}(\mathbb{T})$ such that $\nu_n \in H_{W,V}(\mathbb{T})$ for all $n$, and $\nu_n$ solves the equation $(I - \Delta_{W,V}) \nu_n = \gamma_n \nu_n$ for some $\{\gamma_n\}_{n \in \mathbb{N}} \subset \mathbb{R}$. Furthermore,
$$1 \le \gamma_1 \le \gamma_2 \le \cdots \to \infty$$
as $n \to \infty$. Now, observe that $\nu \in L^2_V(\mathbb{T})$ is an eigenvector of $\Delta_{W,V}$ with eigenvalue $\lambda$ if and only if $\nu$ is an eigenvector of $\mathcal{A}$ with eigenvalue $\gamma = 1 + \lambda$. Thus, $\{\nu_n, \lambda_n\}_{n \in \mathbb{N}}$ forms a complete orthonormal system of $L^{2}_{V}(\mathbb{T})$, where $\lambda_n = \gamma_n - 1$. Moreover,
\begin{equation}\label{asymplambda}
0 = \lambda_0 \le \lambda_1 \le \lambda_2 \le \cdots \to \infty,
\end{equation}
as $n \to \infty$, where for each $k \in \mathbb{N} \setminus \{0\}$, $\lambda_k$ satisfies
\begin{equation}\label{autovt2}
-\Delta_{W,V} \nu_k = \lambda_k \nu_k.
\end{equation}

This allows us to describe the elements of $H_{W,V}(\mathbb{T})$ in terms of their Fourier coefficients. Indeed, \cite[Theorem 6]{keale} tells us that
\begin{equation}\label{fourierHWV}
	H_{W,V}(\mathbb{T}) = \left\{f \in L^{2}_{V}(\mathbb{T}) : f = \alpha_0 + \sum_{i=1}^{\infty} \alpha_i \nu_i,\ \sum_{i=1}^{\infty} \lambda_i \alpha_i^2 < \infty \right\}.
\end{equation}

Finally, it is important to note that we can interchange the roles of $V$ and $W$ and obtain ``dual'' versions of the above results. More precisely,

\begin{remark}\label{orderVW2}
	In the rest of the paper, we need to pay attention to the position of $W$ and $V$ in the subscript. We will use the subscript $W,V$ for every structure strictly related to the operator $D^{+}_{V} D^{-}_{W}$. Similarly, whenever we use the subscript $V,W$, we will be referring to the analogous structure related to the operator $D^{-}_{W} D^{+}_{V}$. Notice that by doing this, we will keep changing between c\`adl\`ag and c\`agl\`ad functions. We ask the reader to be attentive to these details, as they are subtle.
\end{remark}

Below, we provide a combination of Lemmas 3 and 4 from \cite{keale}. 
\begin{lemma}\label{Wderivclosed}
The space
$$\mathcal{W} = \{D_W^- f : f \in H_{W,V}(\mathbb{T})\}$$
is a closed subspace of $L^{2}_{W,0}(\mathbb{T})$. Moreover,
the set $\left\{\frac{1}{\sqrt{\lambda_{i}}} D^{-}_{W} \nu_{i}\right\}_{i=1}^{\infty}$, where $\nu_{k}$ satisfies (\ref{autovt2}), is a complete orthonormal set in $\mathcal{W}$.
\end{lemma}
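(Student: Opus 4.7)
The plan is to establish the stronger identification $\mathcal{W} = L^{2}_{W,0}(\mathbb{T})$, which makes the closedness claim immediate, and then derive orthonormality and completeness of the rescaled family $\{\lambda_i^{-1/2} D_W^{-} \nu_i\}_{i \ge 1}$ by combining Proposition \ref{intbypartsprop2} with the fact that $\{\nu_i\}_{i \ge 0}$ is a complete orthonormal basis of $L^{2}_V(\mathbb{T})$.

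For the identification, the inclusion $\mathcal{W} \subseteq L^{2}_{W,0}(\mathbb{T})$ follows from Theorem \ref{sobchar2}: evaluating the absolutely continuous representation of $f \in H_{W,V}(\mathbb{T})$ at $x = 1$ and matching $f(1)$ with $f(0)$ under the torus identification forces $\int_{(0,1]} D_W^{-} f \, dW = 0$. For the reverse inclusion, given $g \in L^{2}_{W,0}(\mathbb{T})$, I would set $f(x) := \int_{(0,x]} g \, dW$ for $x \in [0,1]$; the zero-mean condition yields $f(1) = f(0) = 0$, and Cauchy--Schwarz bounds $f$ uniformly by $\|g\|_W \sqrt{W(1)}$, placing $f \in L^{2}_V(\mathbb{T})$. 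Theorem \ref{sobchar2} then certifies $f \in H_{W,V}(\mathbb{T})$ with $D_W^{-} f = g$, giving the desired surjectivity. Since $L^{2}_{W,0}(\mathbb{T})$ is closed in $L^{2}_{W}(\mathbb{T})$, the closedness assertion of the lemma follows at once.

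For orthonormality, Proposition \ref{intbypartsprop2} applied to the eigenpairs $(\nu_i, \lambda_i)$, which lie in $\mathcal{D}_{W,V}(\mathbb{T})$ by the eigenvalue equation, gives
\[
\langle D_W^{-} \nu_i, D_W^{-} \nu_j \rangle_W = \langle -\Delta_{W,V}\nu_i, \nu_j \rangle_V = \lambda_i \delta_{ij},
\]
so the rescaled family is orthonormal. For completeness, suppose $g \in L^{2}_{W,0}(\mathbb{T})$ satisfies $\langle g, D_W^{-} \nu_i \rangle_W = 0$ for every $i \ge 1$; the surjectivity established above provides $f \in H_{W,V}(\mathbb{T})$ with $D_W^{-} f = g$, and integration by parts yields $\lambda_i \langle f, \nu_i \rangle_V = 0$ for $i \ge 1$. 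Since $\lambda_i > 0$ for $i \ge 1$, one has $\langle f, \nu_i \rangle_V = 0$ for all $i \ge 1$; replacing $f$ by $f - \langle f, \nu_0\rangle_V \nu_0$ (which leaves $D_W^{-} f$ unchanged because $\nu_0$ is constant) and invoking completeness of $\{\nu_i\}_{i \ge 0}$ in $L^{2}_V(\mathbb{T})$ forces $f \equiv 0$, so $g = 0$.

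The step I expect to be the main obstacle is the reverse inclusion in the identification $\mathcal{W} = L^{2}_{W,0}(\mathbb{T})$: one must ensure that the constructed antiderivative genuinely descends to an element of $H_{W,V}(\mathbb{T})$ on the torus, which is where both the zero-mean hypothesis on $g$ and the full weight of Theorem \ref{sobchar2} are indispensable. Once this identification is secured, the remaining arguments reduce to careful bookkeeping involving the constant mode $\nu_0$ and the elementary observation that $D_W^{-}$ annihilates constants.
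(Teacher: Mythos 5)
Your argument is correct, but it cannot be compared line-by-line with a proof in the paper, because the paper does not prove this lemma at all: it is imported verbatim as a combination of Lemmas 3 and 4 of \cite{keale}. What you do is in fact stronger than the statement as phrased. By running the ``if'' direction of Theorem \ref{sobchar2} on the antiderivative $f(x)=\int_{(0,x]}g\,dW$ of an arbitrary zero-mean $g$, you obtain the identification $\mathcal{W}=L^{2}_{W,0}(\mathbb{T})$, from which closedness is trivial and completeness of $\{\lambda_i^{-1/2}D_W^-\nu_i\}$ reduces, via Proposition \ref{intbypartsprop2}, to completeness of $\{\nu_i\}$ in $L^2_V(\mathbb{T})$; the cited source only asserts that $\mathcal{W}$ is a closed subspace, so your route buys surjectivity of $D_W^-$ onto $L^{2}_{W,0}(\mathbb{T})$ as a by-product. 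Two small caveats, neither of which I regard as a gap: (i) you are using the intended reading of Theorem \ref{sobchar2}, whose displayed formula in this paper contains an apparent typo (the integrand should be $F$ itself, with $D_W^-f=F$; as literally printed, with $D_W^-F$ under the integral, your reverse inclusion would be circular), and the identification $D_W^-f=g$ deserves one line — e.g.\ compare the two representations of $f$, note that $x\mapsto\int_{(0,x]}(g-F)\,dW$ is right-continuous and vanishes on a $V$-dense set because $V$ is strictly increasing, hence vanishes identically; (ii) the final step, passing from $f-\langle f,\nu_0\rangle_V\nu_0=0$ in $L^2_V(\mathbb{T})$ to $g=0$ in $L^2_W(\mathbb{T})$, implicitly uses that the weak derivative is well defined on $L^2_V$-equivalence classes (part of the framework of \cite{keale}, since otherwise $\|\cdot\|_{W,V}$ would not be a norm), or alternatively the same right-continuity argument as in (i). You also tacitly use that the $\lambda_0=0$ eigenspace consists of constants, which follows from $\int|D_W^-\nu_0|^2\,dW=\langle-\Delta_{W,V}\nu_0,\nu_0\rangle_V=0$ together with Theorem \ref{sobchar2}; it is worth stating. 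With these remarks incorporated, your proof stands on the results quoted in Section \ref{sec:review} alone.
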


The following theorem is a version of the fundamental theorem of calculus:

\begin{theorem}
Let $W$ be a strictly increasing function on $\left[c_1, c_2\right]$. Assume that $f$ is left-continuous, $W$-locally integrable, and that $F$ and $W$ are right-continuous on $\left[c_1, c_2\right)$. Then, the following statements are equivalent:
	\begin{enumerate}
		\item  $D_W^{-} F(x) = f(x)$ for all $x \in \left(c_1, c_2\right]$;
		\item $\int_{\left(a_1, a_2\right]} f(y)\, dW(y) = F(a_2) - F(a_1)$ whenever $c_1 \leq a_1 \leq a_2 \leq c_2$.
	\end{enumerate}
\end{theorem}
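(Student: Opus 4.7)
The plan is to establish the two implications separately, with the direction $(2) \Rightarrow (1)$ being a direct averaging argument and $(1) \Rightarrow (2)$ requiring a Cousin-type compactness argument to prove constancy.

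\textbf{For $(2) \Rightarrow (1)$,} I would fix $x \in (c_1, c_2]$ and, for $h < 0$ small enough that $x + h > c_1$, rewrite the difference quotient using the integral identity as
\[
\frac{F(x+h) - F(x)}{W(x+h) - W(x)} = \frac{1}{W(x) - W(x+h)} \int_{(x+h, x]} f(y)\, dW(y),
\]
which is the $dW$-weighted average of $f$ over $(x+h, x]$. Since $f$ is left-continuous at $x$, for every $\epsilon > 0$ there exists $\delta > 0$ with $|f(y) - f(x)| < \epsilon$ on $(x - \delta, x]$, and the weighted average then lies within $\epsilon$ of $f(x)$ whenever $0 < -h < \delta$. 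This single argument handles both the case where $W$ is continuous at $x$ and the case where $W$ jumps at $x$, since in the latter the denominator $W(x) - W(x+h)$ stays bounded below while the numerator converges to $f(x)[W(x) - W(x^-)]$ by monotone convergence.

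\textbf{For $(1) \Rightarrow (2)$,} I would introduce the candidate antiderivative
\[
G(x) := F(c_1) + \int_{(c_1, x]} f(y)\, dW(y), \qquad x \in [c_1, c_2],
\]
observe that $G$ is right-continuous on $[c_1, c_2)$ by right-continuity of $W$ and dominated convergence, and note that $G$ satisfies (2) by construction. Applying the already-proved $(2) \Rightarrow (1)$ direction to $G$ yields $D_W^- G(x) = f(x)$ on $(c_1, c_2]$. Hence $H := F - G$ is right-continuous on $[c_1, c_2)$ and satisfies $D_W^- H \equiv 0$ on $(c_1, c_2]$. The remaining step is to show $H$ is constant on $[c_1, c_2]$.

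\textbf{The constancy of $H$ is the main obstacle,} since $W$ may have dense jumps and classical mean value arguments are unavailable. My plan is a Henstock--Kurzweil/Cousin-type gauge argument: given $a < b$ in $[c_1, c_2]$ and $\epsilon > 0$, for each $x \in (a, b]$ use $D_W^- H(x) = 0$ to choose $\delta(x) > 0$ ensuring $|H(x) - H(y)| \le \epsilon (W(x) - W(y))$ for all $y \in (x - \delta(x), x)$. A Cousin-type compactness argument with right-endpoint tags then produces a partition $a = x_0 < x_1 < \cdots < x_n = b$ satisfying $x_i - x_{i-1} < \delta(x_i)$ for every $i \ge 1$, and telescoping yields
\[
|H(b) - H(a)| \le \sum_{i=1}^{n} |H(x_i) - H(x_{i-1})| \le \epsilon \sum_{i=1}^{n} (W(x_i) - W(x_{i-1})) = \epsilon (W(b) - W(a)).
\]
Letting $\epsilon \to 0$ gives $H(b) = H(a)$, so $H$ is constant and $F(a_2) - F(a_1) = G(a_2) - G(a_1) = \int_{(a_1, a_2]} f\, dW$. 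The delicate point is the first subinterval $[x_0, x_1]$: when $a = c_1$ the derivative $D_W^- H$ is undefined at $x_0$, so the estimate for this interval must come entirely from the gauge condition at $x_1$, which by construction controls the variation of $H$ on the entire interval $[x_0, x_1] \subset (x_1 - \delta(x_1), x_1]$.
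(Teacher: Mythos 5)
Your direction $(2)\Rightarrow(1)$ (the $dW$-weighted average argument using left-continuity of $f$) and your reduction of $(1)\Rightarrow(2)$ to the constancy of $H=F-G$ are both sound; note the paper itself only quotes this fundamental-theorem-type result in its review section without proof, so the comparison below is with what a correct argument must contain. The genuine gap is in the constancy step: the ``Cousin-type compactness argument with right-endpoint tags'' you invoke is not a valid general principle. Because only the left $W$-derivative is available, your gauge condition at a tag $x$ controls $H$ only on $(x-\delta(x),x)$, so you are forced to tag every subinterval at its \emph{right} endpoint; but for an arbitrary gauge $\delta:(a,b]\to(0,\infty)$ a partition $a=x_0<x_1<\cdots<x_n=b$ with $x_i-x_{i-1}<\delta(x_i)$ need not exist. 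For example, with $\delta(x)=(x-a)/2$ the first subinterval would require $x_1-a<(x_1-a)/2$, which is impossible; the same obstruction occurs at any interior point $c$ if $\delta(x)\le (x-c)/2$ for $x>c$. Since the choice of $\delta(x)$ coming from $D_W^-H(x)=0$ carries no lower bound whatsoever, nothing rules out exactly this kind of decay, so the partition your telescoping needs may simply fail to exist. Relatedly, the ``delicate point'' you flag (the missing derivative at $x_0=a$) is not the real difficulty; the difficulty is the existence of the right-tagged fine partition itself, and it cannot be resolved without using the right-continuity of $H$, which your constancy step never invokes.

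The standard repair is a one-sided ``creeping'' (infimum) argument anchored at the right endpoint, which is where right-continuity enters. Fix $a_1<a_2$ and $\epsilon>0$, and let $t=\inf T$ where $T=\{z\in[a_1,a_2]:\ |H(a_2)-H(z)|\le\epsilon\,(W(a_2)-W(z))\}$. If $z_n\downarrow t$ with $z_n\in T$, then right-continuity of $H$ at $t$ (available since $t\in[c_1,c_2)$ in the nontrivial case) and monotonicity of $W$ give $|H(a_2)-H(t)|\le\epsilon\,(W(a_2)-W(t))$, so $t\in T$. If $t>a_1$, then $D_W^-H(t)=0$ produces points $y<t$ with $|H(t)-H(y)|\le\epsilon\,(W(t)-W(y))$, hence $y\in T$, contradicting $t=\inf T$. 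Thus $t=a_1$, and letting $\epsilon\to0$ gives $H(a_2)=H(a_1)$, i.e.\ $H$ is constant and $(1)\Rightarrow(2)$ follows as you intended. (Two cosmetic points: in $(2)\Rightarrow(1)$ the jump case needs dominated rather than monotone convergence, though your averaging argument already covers it; and $G$ is right-continuous simply because $dW\bigl((x,x+h]\bigr)\to0$ as $h\downarrow0$.)
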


There is also the following version of the integration by parts formula with boundary terms:

\begin{theorem}\label{thm:intbyparts}
	Let $f$ and $g$ be well-defined functions on $[a, b]$. Assume that $g$ is a c\`adl\`ag and $W$-left differentiable function with $D_W^{-}g$ c\`agl\`ad, and that $f$ is a c\`agl\`ad, $V$-right differentiable function with $D_V^{+}f$ c\`adl\`ag. Then,
	$$\int_{[a, b)} g(s) D_V^{+} f(s)\, dV(s) = [f(b) g(b) - f(a) g(a)] - \int_{(a, b]} f(s) D_W^{-} g(s)\, dW(s).$$
\end{theorem}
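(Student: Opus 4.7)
The plan is to derive the identity by substituting the fundamental theorem of calculus into the left-hand side and then applying Fubini, with careful bookkeeping of atoms at the endpoint $b$. From the fundamental theorem stated just above (applied under our hypotheses, since $g$ is c\`adl\`ag, $W$-left differentiable, and $D_W^{-}g$ is c\`agl\`ad hence left-continuous), we obtain the representation $g(s)=g(a)+\int_{(a,s]}D_W^{-}g(t)\,dW(t)$ for $s\in[a,b]$. The analogous dual identity for $f$, which follows from the same FTC with the roles of left/right and c\`adl\`ag/c\`agl\`ad reversed, yields $f(t_2)-f(t_1)=\int_{[t_1,t_2)}D_V^{+}f(s)\,dV(s)$ for $a\le t_1\le t_2\le b$.

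Substituting the representation of $g$ into the left-hand side splits the integral into two pieces. The first is
$$g(a)\int_{[a,b)}D_V^{+}f(s)\,dV(s)=g(a)[f(b)-f(a)]$$
by the dual FTC. The second is the double integral
$$\int_{[a,b)}\int_{(a,s]}D_W^{-}g(t)\,D_V^{+}f(s)\,dW(t)\,dV(s).$$
I would apply Fubini, justified by the boundedness of the densities on $[a,b]$ together with the finiteness of $dW$ and $dV$, on the region $\{(s,t):a<t\le s<b\}$; swapping to $t\in(a,b)$ and $s\in[t,b)$, the inner integral collapses to $f(b)-f(t)$, giving $\int_{(a,b)}D_W^{-}g(t)[f(b)-f(t)]\,dW(t)$.

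The last step is to re-express this integral over $(a,b]$ in place of $(a,b)$. Writing $\int_{(a,b)}=\int_{(a,b]}-\int_{\{b\}}$ generates two atomic corrections at $t=b$: one attached to the $f(b)$-factor and one attached to the $f(t)$-factor, both proportional to $D_W^{-}g(b)(W(b)-W(b-))$ and with opposite signs, so they cancel exactly. The FTC applied to $g$ turns $\int_{(a,b]}D_W^{-}g(t)\,dW(t)$ into $g(b)-g(a)$, and after combining with the term $g(a)[f(b)-f(a)]$ from the first piece, all cross terms collapse into $f(b)g(b)-f(a)g(a)$. What remains is precisely $-\int_{(a,b]}f(t)D_W^{-}g(t)\,dW(t)$, yielding the claimed identity.

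The main obstacle is exactly this boundary bookkeeping: the convention pairing c\`adl\`ag $g$ with c\`agl\`ad $D_W^{-}g$ (and the reverse for $f$) forces the FTC and Fubini to produce naturally integrals over $(a,b)$ and $[a,b)$, whereas the target formula asks for $(a,b]$. Verifying that the atomic contributions at $b$ cancel, so no residual $\Delta W(b)$ term survives, is the only delicate point; the remainder of the argument is formal once the dual version of the FTC is granted.
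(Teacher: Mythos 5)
Your argument is correct. Note that this theorem is stated in the review section and the paper offers no proof of its own (it is recalled from the cited work of Simas--Sousa), so there is no in-paper argument to compare against; judged on its own terms, your derivation goes through. The three ingredients you need are all legitimately available: the fundamental-theorem representation $g(s)=g(a)+\int_{(a,s]}D_W^-g\,dW$ follows from the FTC stated immediately before (with $F=g$ c\`adl\`ag, $D_W^-g$ c\`agl\`ad hence left-continuous and, being regulated on $[a,b]$, bounded and $dW$-integrable); the dual identity $\int_{[t_1,t_2)}D_V^+f\,dV=f(t_2)-f(t_1)$ is the mirror statement obtained by the $V$/$W$, c\`adl\`ag/c\`agl\`ad exchange described in Remark~\ref{orderVW2}; and Fubini on the triangle $\{a<t\le s<b\}$ is justified exactly as you say, since both lateral derivatives are regulated (hence bounded and Borel) and $dW$, $dV$ are finite on $[a,b]$. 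Your endpoint bookkeeping is also right, and can be stated even more simply: the combined integrand $D_W^-g(t)\left[f(b)-f(t)\right]$ vanishes at $t=b$, so passing from $(a,b)$ to $(a,b]$ costs nothing, which is precisely the cancellation of the two atomic terms you describe. After that, $\int_{(a,b]}D_W^-g\,dW=g(b)-g(a)$ and the algebra collapses to $f(b)g(b)-f(a)g(a)-\int_{(a,b]}f\,D_W^-g\,dW$, as claimed.
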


\section{Generalized Maclaurin Expansion for Functions in $C^{\infty}_{W,V}(\mathbb{R})$}\label{sec:maclaurin}

The goal of this section is to provide an analogue of the Maclaurin series expansion for functions in $C^{\infty}_{W,V}(\mathbb{R})$. Since we are interested in the Maclaurin expansion, our focus is on investigating the behavior of functions in $C^{\infty}_{W,V}(\mathbb{R})$ near the origin. The same approach can be used to obtain similar representations around any other point in $\mathbb{R}$. However, as our main applications are for the torus, several simplifications are possible. Indeed, since we can rotate the torus and choose any continuity point of $W$ as the base point, we may assume without loss of generality that $W(0)=0$ (since a vertical translation of $W$ induces the same measure $dW$). This shows that the Maclaurin expansion is well-suited for the torus case. 

Because we are interested in local properties near the origin, instead of working with $C^{\infty}_{W,V}(\mathbb{R})$, we may consider $C^{\infty}_{W,V}([a,b])$ with $0\in(a,b)$. Since $\int_{(a,b]} f\,dW = -\int_{(b,a]} f\,dW$ when $b<a$, we can focus our attention on the case $C^{\infty}_{W,V}([0,1])$. 

Our first proposition is a simple consequence of the integration by parts formula and provides insight into which functions play the role of the monomials $\frac{x^n}{n!}$ in this generalized framework.

\begin{proposition}
	For $0\le s\le x\le 1$, define $F_{1}(x,s)=W(s)$,
\begin{equation*}
	F_{n}(x,s)=\begin{cases}
		\displaystyle\int_{[0,s)}\left[F_{n-1}(x,x)-F_{n-1}(x,\xi)\right]dV(\xi),\quad \text{n even};\\
		\displaystyle\int_{(0,s]}\left[F_{n-1}(x,x)-F_{n-1}(x,\xi)\right]dW(\xi),\quad \text{n odd}.
	\end{cases}
\end{equation*}
Then, if $f\in C_{W,V}([0,1])$, there is a uniquely determined function $I_{n}:[0,1]\to\mathbb{R}$ such that
\begin{equation*}
	f(x)-f(0) = \sum_{k=1}^{n-1}\left[D^{(k)}_{W,V}f(0)\right]F_{k}(x,x)+I_{n}(x)
\end{equation*}
for every integer $n \geq 2$ and $x\in [0,1]$. 
\end{proposition}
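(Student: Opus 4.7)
The uniqueness assertion is immediate: once the identity is written, $I_n(x)$ is defined to be the difference $f(x)-f(0)-\sum_{k=1}^{n-1}[D^{(k)}_{W,V}f(0)]F_k(x,x)$, so only existence, i.e.\ showing that the $D^{(k)}_{W,V}f(0)$ extracted at each level are the ``right'' coefficients, needs work. My plan is to induct on $n\ge 2$, carrying along an explicit iterated-Stieltjes formula for $I_n$.

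For the base case $n=2$, the fundamental theorem of calculus stated at the end of Section~\ref{sec:review} gives $f(x)-f(0)=\int_{(0,x]}D^{-}_W f(s)\,dW(s)$. Splitting $D^{-}_W f(s)=D^{-}_W f(0)+[D^{-}_W f(s)-D^{-}_W f(0)]$ and noting that $F_1(x,x)=W(x)$ yields the identity with
$$I_2(x)=\int_{(0,x]}\bigl[D^{-}_W f(s)-D^{-}_W f(0)\bigr]\,dW(s).$$

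For the inductive step, I would carry forward the stronger statement that $I_n$ admits an iterated representation whose innermost integrand has the form $D^{(n-1)}_{W,V}f(\xi)-D^{(n-1)}_{W,V}f(0)$, integrated against the alternating pattern of $dW$ on $(0,s]$ and $dV$ on $[0,s)$ dictated by the parity rule in \eqref{nderiv}. Applying the fundamental theorem of calculus to this innermost difference expresses it as $\int D^{(n)}_{W,V}f\,d\mu$ with $\mu\in\{dW,dV\}$ determined by parity. Writing $D^{(n)}_{W,V}f=D^{(n)}_{W,V}f(0)+[D^{(n)}_{W,V}f-D^{(n)}_{W,V}f(0)]$ splits $I_n(x)$ as $D^{(n)}_{W,V}f(0)\cdot J_n(x)+I_{n+1}(x)$, where $J_n(x)$ is a purely measure-theoretic iterated integral of the constant $1$, and $I_{n+1}$ retains the form required by the induction hypothesis at level $n+1$.

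The one nontrivial step is the identification $J_n(x)=F_n(x,x)$. Here I would proceed by a Fubini argument (or equivalently iterate Theorem~\ref{thm:intbyparts}) that swaps the order of the outermost with the innermost variable: in the case $n=2$, this is the computation $\int_{(0,x]}V(s)\,dW(s)=\int_{[0,x)}[W(x)-W(\xi)]\,dV(\xi)$, and in general this exchange converts the representation of $J_n$ into the recursive form that defines $F_n(x,x)$. I expect this bookkeeping of integration domains (alternation between $(0,\cdot]$ and $[0,\cdot)$, between $dW$ and $dV$, and between c\`adl\`ag versus c\`agl\`ad integrands) to be the main obstacle, since one has to be careful that the atoms of $W$ and $V$ are placed on the correct side of each bracket — this is precisely where the c\`adl\`ag/c\`agl\`ad dichotomy and the definition of $C^n_{W,V,0}$ ensure the calculations are consistent. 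Once $J_n(x)=F_n(x,x)$ is established, collecting terms closes the induction and delivers the expansion with the claimed (unique) remainder $I_{n+1}$.
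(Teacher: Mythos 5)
Your proposal is correct, but it takes a genuinely different route from the paper's proof. The paper runs the induction through integration by parts with boundary terms (Theorem~\ref{thm:intbyparts}): at each stage the two-variable kernel $F_k(x,s)$ enters as the antiderivative used in the integration by parts, the boundary term $D^{(k)}_{W,V}f(x)F_k(x,x)$ is converted into $D^{(k)}_{W,V}f(0)F_k(x,x)$ plus an integral absorbed into the remainder, and $I_n$ comes out directly in the single-integral form $\int D^{(n+1)}_{W,V}f(s)\bigl[F_n(x,x)-F_n(x,s)\bigr]\,d\mu(s)$ with $\mu\in\{dW,dV\}$, which is precisely the form estimated later in Theorem~\ref{analyticalVW} via Lemma~\ref{taylorbound}. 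You instead iterate the fundamental theorem of calculus, split off the constant $D^{(n)}_{W,V}f(0)$, and must then identify the iterated integral of $1$, your $J_n(x)$, with $F_n(x,x)$; this identification is exactly the content of the paper's Corollary~\ref{rmk111} (there obtained from the Leibniz-rule Lemma~\ref{leibnizrule}), and your Fubini argument does close it: unrolling the definition of $F_n(x,x)$ produces the chain region $0\le\xi_1<\xi_2\le\xi_3<\cdots\le x$ with the alternating product measure $dV\otimes dW\otimes\cdots$, which is the same region and measure as the decreasing-chain representation of $J_n$, so Fubini--Tonelli (finite Borel measures, nonnegative integrand) applies, and the strict/non-strict endpoint bookkeeping you worried about indeed matches on both sides. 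Your route is more elementary — it needs only the one-sided FTC (whose c\`adl\`ag/c\`agl\`ad hypotheses hold because $D^{(k)}_{W,V}f\in C_{\sigma(k)}(\mathbb{T})$ alternates parity for $f$ in the smooth class) plus Fubini, and avoids the two-variable integration by parts — at the mild price that your remainder $I_{n+1}$ appears as an $(n{+}1)$-fold iterated integral rather than in the kernel form; the two agree (by the uniqueness you noted, or by one further Fubini), and the kernel form is what the paper exploits afterwards to bound the remainder. The uniqueness assertion is, as you say, immediate.
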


\begin{proof}
Let $f\in C^{\infty}_{W,V}([0,1])$. From the integration by parts formula, using that $D^{-}_{W}W=1$, we obtain
\begin{align*}
	f(x)-f(0) &= \int_{(0,x]}D^{-}_{W}f(s)dW(s) = \int_{(0,x]}D^{-}_{W}f(s)D^{-}_{W}W\, dW(s) \\
	&= D^{-}_{W}f(x)W(x)-\int_{[0,x)}D^{+}_{V}D^{-}_{W}f(s)W(s)dV(s) \\
	&= D^{-}_{W}f(0)W(x)+\int_{[0,x)}D^{+}_{V}D^{-}_{W}f(s)\left[W(x)-W(s)\right]dV(s) \\
	&= D^{-}_{W}f(0)W(x)+I_{2}(x),
\end{align*}
where $I_{2}(x)=\int_{[0,x)}D^{+}_{V}D^{-}_{W}f(s)\left[W(x)-W(s)\right]dV(s)$. Now, define 
$$F_{2}(x,s)=V(s)W(x)-\int_{[0,s)}W(\xi)dV(\xi)=\int_{[0,s)}\left[W(x)-W(\xi)\right]dV(\xi).$$ 
We have that $D^{+}_{V,s}F_{2}(x,s)=W(x)-W(s)$. We can use integration by parts again, this time interchanging the roles of $V$ and $W$ (see Remark \ref{orderVW2}), to obtain
\begin{align*}
	I_{2}(x) &= \int_{[0,x)}D^{+}_{V}D^{-}_{W}f(s)\left[W(x)-W(s)\right]dV(s) \\
	&= D^{+}_{V}D^{-}_{W}f(x)F_{2}(x,x)-D_{V}^{+}D_{W}^{-}f(0)F_{2}(x,0)-\int_{(0,x]}D^{-}_{W}D^{+}_{V}D^{-}_{W}f(s)F_{2}(x,s)dW(s) \\
	&= D^{+}_{V}D^{-}_{W}f(x)F_{2}(x,x)-\int_{(0,x]}D^{-}_{W}D^{+}_{V}D^{-}_{W}f(s)F_{2}(x,s)dW(s)\\
	&= D^{+}_{V}D^{-}_{W}f(0)F_{2}(x,x)+\int_{(0,x]}D^{-}_{W}D^{+}_{V}D^{-}_{W}f(s)\left[F_{2}(x,x)-F_{2}(x,s)\right]dW(s)\\
	&= D^{+}_{V}D^{-}_{W}f(0)F_{2}(x,x)+I_{3}(x).
\end{align*}
where $I_{3}(x)=\int_{(0,x]}D^{-}_{W}D^{+}_{V}D^{-}_{W}f(s)\left[F_{2}(x,x)-F_{2}(x,s)\right]dW(s).$
In summary,
$$
f(x)-f(0) = D^{-}_{W}f(0)W(x)+D^{+}_{V}D^{-}_{W}f(0)F_{2}(x,x)+I_{3}(x).
$$
Inductively, for all $n \geq 2$ we have
\begin{align}\label{taylorpolynomial}
	f(x)-f(0) = \sum_{k=1}^{n-1}\left[D^{(k)}_{W,V}f(0)\right]F_{k}(x,x)+I_{n}(x),
\end{align}
where $D^{(k)}_{W,V}$ is the operator defined in \eqref{nderiv}, $F_{1}(x,s)=W(s)$,
\begin{equation}\label{term:1}
	F_{n}(x,s)=\begin{cases}
		\displaystyle\int_{[0,s)}\left[F_{n-1}(x,x)-F_{n-1}(x,\xi)\right]dV(\xi),\quad \text{n even};\\
		\displaystyle\int_{(0,s]}\left[F_{n-1}(x,x)-F_{n-1}(x,\xi)\right]dW(\xi),\quad \text{n odd},
	\end{cases}
\end{equation}
and the remainder is given by
\begin{equation*}
	I_n(x)=\begin{cases}
		\displaystyle\int_{[0, x)} D_{W, V}^{(n+1)} f(s)\left[F_n(x, x)-F_n(x, s)\right] d W(s), & n \text{ even}; \\ [2ex]
		\displaystyle\int_{(0, x]} D_{W, V}^{(n+1)} f(s)\left[F_n(x, x)-F_n(x, s)\right] d V(s), & n \text{ odd.}
	\end{cases}
\end{equation*}
\end{proof}

The next lemma will help in estimating the remainder $I_{n}(x)$. In particular, it provides estimates for $F_{2n}(x,x)$ and $F_{2n+1}(x,x)$.
\begin{lemma}\label{taylorbound} 
If $(F_{n})_{n\in\mathbb{N}}$ is the sequence defined in \eqref{term:1}, then for $0 \le s\le x\le1$,
\begin{equation}\label{estimativadoresto}
	F_{2n}(x)-F_{2n}(s)\le \frac{\left(F_{2}(x)-F_{2}(s)\right)^n}{n!},
\end{equation}
where $F_{n}(s):=F_{n}(x,s)$. In particular, for some constant $C>0$ independent of $n$, we have
$$F_{2n+1}(x)\le C\frac{\left(F_{2}(x)\right)^n}{n!}.$$
\end{lemma}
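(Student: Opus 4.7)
The plan is to unroll the recursion \eqref{term:1} and recognize $F_{2n}(x)-F_{2n}(s)$ as an iterated integral over an ordered region, then exploit symmetry under permutations to extract the factor $1/n!$.

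First, by induction on $n$ I would establish the identity
\begin{equation*}
F_{2n}(x,x) - F_{2n}(x,s) = \int_{A_n(s,x)} \prod_{i=1}^{n} dV(\xi_i)\, dW(\eta_i),
\end{equation*}
where
\begin{equation*}
A_n(s,x) = \{(\xi_1,\eta_1,\ldots,\xi_n,\eta_n) : s \le \xi_n < \eta_n \le \xi_{n-1} < \eta_{n-1} \le \cdots \le \xi_1 < \eta_1 \le x\}.
\end{equation*}
The base case $n=1$ is immediate since $F_2(x,x)-F_2(x,s)=\int_{[s,x)}(W(x)-W(\zeta))\,dV(\zeta)=\int_{[s,x)}dV(\xi)\int_{(\xi,x]}dW(\eta)$. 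The inductive step combines the defining recursions $F_{2n+2}(x,x)-F_{2n+2}(x,s)=\int_{[s,x)}[F_{2n+1}(x,x)-F_{2n+1}(x,\xi)]\,dV(\xi)$ and $F_{2n+1}(x,x)-F_{2n+1}(x,\xi)=\int_{(\xi,x]}[F_{2n}(x,x)-F_{2n}(x,\eta)]\,dW(\eta)$ with the inductive hypothesis, and the resulting iterated integral is exactly $\int_{A_{n+1}(s,x)}$.

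Next, introduce $B = \{(\xi,\eta) : s \le \xi < \eta \le x\}$ and the product measure $\mu := dV \otimes dW$ on $B$, so that $\mu(B) = F_2(x,x) - F_2(x,s)$. Then $A_n(s,x) \subset B^n$, and the crucial observation is that for every $p = (p_1,\ldots,p_n) \in B^n$ at most one permutation $\sigma \in S_n$ satisfies $\sigma \cdot p := (p_{\sigma(1)},\ldots,p_{\sigma(n)}) \in A_n(s,x)$. Indeed, inside $A_n(s,x)$ the $\eta$-coordinates are strictly decreasing (since $\eta_{i+1} \le \xi_i < \eta_i$), so the ordering forces $\sigma$ uniquely whenever the $\eta_i$ are distinct, while if two $\eta_i$'s coincide (possible only on atoms of $dW$) no $\sigma$ can work. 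Using the permutation-invariance of $\mu^n$,
\begin{equation*}
n!\, \mu^n(A_n(s,x)) = \sum_{\sigma \in S_n} \mu^n(\sigma^{-1} A_n(s,x)) = \int_{B^n} \#\{\sigma \in S_n : \sigma \cdot p \in A_n(s,x)\}\, d\mu^n(p) \le \mu(B)^n,
\end{equation*}
which yields the desired estimate \eqref{estimativadoresto}.

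The statement about $F_{2n+1}$ follows directly from the definition and the bound just proved:
\begin{equation*}
F_{2n+1}(x,x) = \int_{(0,x]} [F_{2n}(x,x) - F_{2n}(x,\xi)]\, dW(\xi) \le \int_{(0,x]} \frac{(F_2(x,x)-F_2(x,\xi))^n}{n!}\, dW(\xi) \le W(1)\, \frac{F_2(x,x)^n}{n!},
\end{equation*}
so $C=W(1)$ works. I expect the main subtlety to be justifying the permutation bound when $dV$ or $dW$ have atoms, but as explained this reduces to the simple fact that the strict ordering of the $\eta_i$ inside $A_n(s,x)$ rules out ties, so $\#\{\sigma : \sigma \cdot p \in A_n(s,x)\} \le 1$ holds pointwise on $B^n$ without any measure-theoretic caveat.
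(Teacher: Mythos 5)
Your proof is correct, and it takes a genuinely different route from the paper. You unroll the recursion \eqref{term:1} into a $2n$-fold iterated integral, identify $F_{2n}(x,x)-F_{2n}(x,s)$ as the $(dV\otimes dW)^{\otimes n}$-measure of the ordered region $A_n(s,x)\subset B^n$, and obtain the factor $\frac{1}{n!}$ by symmetrization: since the $\eta$-coordinates are strictly ordered inside $A_n(s,x)$, at most one block permutation of any point of $B^n$ lands in $A_n(s,x)$, whence $n!\,\mu^n(A_n(s,x))\le \mu(B)^n=(F_2(x,x)-F_2(x,s))^n$. All the steps check out: the unrolling only needs Tonelli (nonnegative integrands), the permutation count is a pointwise fact that is insensitive to atoms of $dW$ or $dV$, and your direct bound $F_{2n+1}(x,x)\le W(1)\,F_2(x,x)^n/n!$ is a clean (in fact slightly tighter, $C=W(1)$ rather than $W(1)V(1)$) derivation of the second claim. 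The paper instead proceeds level by level: it first bounds $F_{2n}(x)-F_{2n}(s)$ by iterated integrals against the measure $dF_2$ and then invokes a generalized power-rule inequality, $D^{+}_{F_2}(c-F_2)^{n+1}(\xi)\le -(n+1)(c-F_2(\xi+))^{n}$, integrated via the $F_2$-absolute continuity of $(c-F_2)^{n+1}/(n+1)$ in the sense of \cite{pouso}, taking right limits to pass to open intervals. That Stieltjes-calculus route produces intermediate estimates involving $F_{2n}(x,x)-F_{2n}(x,\alpha+)$ and $dF_2$ that are reused verbatim in the proof of Theorem \ref{analyticalVW}, which is something your argument does not provide as a byproduct; in exchange, your symmetrization argument is more elementary, avoids the one-sided derivative manipulations and right-limit bookkeeping needed to control jumps, and makes the source of the $1/n!$ combinatorially transparent.
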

\begin{proof}
	Assume that $s<x$. It is easy to see from \eqref{term:1} that $F_{n}(s)$ is increasing for $s\le x$. So,
	\begin{equation}\label{recorrencia1}
		\begin{split}
		F_{2n}(x)-F_{2n}(s) &=\int_{[s,x)}F_{2n-1}(x)-F_{2n-1}(\xi)dV(\xi)\\ 
		&= \int_{[s,x)}\left(\int_{(\xi,x]}F_{2(n-1)}(x)-F_{2(n-1)}(\eta)dW(\eta) \right) dV(\xi) 
		\\ 
		&\le  \int_{[s,x)}\left(F_{2(n-1)}(x)-F_{2(n-1)}(\xi+)\right)(W(x)-W(\xi))dV(\xi) \\
		&= \int_{[s,x)}F_{2(n-1)}(x)-F_{2(n-1)}(\xi+)dF_{2}(\xi).
		\end{split}
	\end{equation}
	Similarly,
	\begin{equation}\label{recorrencia2}
		F_{2n}(x)-F_{2n}(\xi+)\le \int_{(\xi,x)} F_{2(n-1)}(x)-F_{2(n-1)}(\beta+)dF_{2}(\beta).   
	\end{equation}
	From (\ref{recorrencia1}) and (\ref{recorrencia2}), it is enough to obtain a bound for
	\begin{equation}\label{termointegral}
		\int_{[s,x)}dF_{2}(\xi_{1})\int_{(\xi_{1},x)}dF_{2}(\xi_{2})\cdots \int_{(\xi_{n-1},x)}(F_{2}(x)-F_{2}(\xi_{n}+))dF_{2}(\xi_{n}).
	\end{equation}
	Now, put $c:=F_{2}(x)$ and notice that
	$$D^{+}_{F_{2}}(c-F_{2})^{n+1}(\xi)=-\sum_{j=0}^{n}\left(c-F_{2}(\xi+)\right)^{j}\left(c-F_{2}(\xi)\right)^{n-j}\le -(n+1)(c-F_{2}(\xi+))^{n}.$$
	Integrating the above inequality and using that $\frac{(c-F_{2})^{n+1}}{n+1}$ is $F_{2}$-absolutely continuous in the sense of \cite{pouso}, we obtain
	\begin{equation}\label{desig1}
		\frac{(c-F_{2}(s))^{n+1}}{n+1}\geq \int_{[s,x)}(c-F_{2}(\xi+))^{n}dF_{2}(\xi).
	\end{equation}
	Now, given any $s^\ast <x$, we can take the limit from the right as $s\to s^\ast+$ in (\ref{desig1}) to obtain
	\begin{equation}\label{desig2}
		\frac{(c-F_{2}(s^\ast+))^{n+1}}{n+1}\geq \int_{(s^\ast,x)}(c-F_{2}(\xi+))^{n}dF_{2}(\xi).
	\end{equation}
	The inequality (\ref{estimativadoresto}) follows from successive applications of (\ref{desig1}) and (\ref{desig2}) to (\ref{termointegral}). The last consequence in the statement follows directly from the previous inequality, since
	\begin{align*}
		F_{2n}(x) &=\int_{(0,x]}F_{2n+1}(x)-F_{2n+1}(\xi)dV(\xi) \\
		&= \int_{(0,x]}\left(\int_{[\xi,x)}F_{2n}(x)-F_{2n}(\eta)dV(\eta) \right) dW(\xi) \\
		& \le \int_{(0,x]}\left(\int_{[\xi,x)}\frac{\left(F_{2}(x)-F_{2}(\eta)\right)^n}{n!} dV(\eta) \right) dW(\xi) \\
		& \le \frac{\left(F_{2}(x)\right)^n}{n!} \int_{(0,x]}\left(\int_{[\xi,x)} dV(\eta) \right) dW(\xi) \le \frac{\left(F_{2}(x)\right)^n}{n!} W(1)V(1).
	\end{align*}
\end{proof}

The next lemma can be viewed as a version of the Leibniz integral rule with respect to the $W$-left-derivative. It will help us when computing certain $W$-left-derivatives related to $F_{n}$.
\begin{lemma}\label{leibnizrule}
	Let $g:[0,1]\times [0,1]\to \mathbb{R}$ satisfy:
	\begin{enumerate}
		\item For all $x\in [0,1]$, $g(x,\cdot):[0,1]\to \mathbb{R}$ is a càglàd function;
		\item There exists $D^{-}_{W,1}g:(0,1]\times [0,1]\to \mathbb{R}$ such that $D^{-}_{W,1}g(x,\cdot):[0,1]\to \mathbb{R}$ is càglàd and 
		$$\lim_{h\to 0} \sup_{\xi\in [0,1]}\left|\frac{g(x,\xi)-g(x-h,\xi)}{W(x)-W(x-h)}-D^{-}_{W,1}g(x,\xi)\right|=0;$$
		\item For all $x\in [0,1]$, $g(x,x)=D_{W,1}g(x,x)=0$.
	\end{enumerate}
	Then, 
	$$D^{-}_{W}\left(\int_{(0,\cdot]}g(\cdot,\xi)dW(\xi)\right)(x)=\int_{(0,x]}D^{-}_{W,1}g(x,\xi)dW(\xi)$$
	for all $x\in (0,1]$.
\end{lemma}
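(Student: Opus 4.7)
The plan is to compute $D^{-}_{W}\Phi(x)$ directly from the definition, where $\Phi(x):=\int_{(0,x]}g(x,\xi)\,dW(\xi)$. Writing $h>0$ and taking the one-sided limit from below, I would split the numerator as
\begin{equation*}
\Phi(x)-\Phi(x-h)=\int_{(0,x-h]}\bigl[g(x,\xi)-g(x-h,\xi)\bigr]\,dW(\xi)+\int_{(x-h,x]}g(x,\xi)\,dW(\xi),
\end{equation*}
so that after dividing by $W(x)-W(x-h)$ the problem reduces to analyzing two pieces independently. Note that this decomposition isolates the $x$-dependence of the integrand on the full domain $(0,x-h]$ from the ``new'' portion $(x-h,x]$ created by shifting the upper limit.

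For the first piece, I would invoke hypothesis (2), which asserts that $\bigl[g(x,\xi)-g(x-h,\xi)\bigr]/\bigl[W(x)-W(x-h)\bigr]$ converges to $D^{-}_{W,1}g(x,\xi)$ \emph{uniformly} in $\xi$. Given $\varepsilon>0$, this uniform estimate lets me replace the quotient by $D^{-}_{W,1}g(x,\xi)$ at a cost no larger than $\varepsilon\,W(1)$. The càglàd function $D^{-}_{W,1}g(x,\cdot)$ is bounded on $[0,1]$, so dominated convergence yields
\begin{equation*}
\int_{(0,x-h]}D^{-}_{W,1}g(x,\xi)\,dW(\xi)\;\longrightarrow\;\int_{(0,x)}D^{-}_{W,1}g(x,\xi)\,dW(\xi)
\end{equation*}
as $h\to 0^{+}$. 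Hypothesis (3) gives $D^{-}_{W,1}g(x,x)=0$, which kills the possible atom of $W$ at $x$, so the latter integral equals $\int_{(0,x]}D^{-}_{W,1}g(x,\xi)\,dW(\xi)$, the desired value. For the second piece, crudely
\begin{equation*}
\left|\frac{1}{W(x)-W(x-h)}\int_{(x-h,x]}g(x,\xi)\,dW(\xi)\right|\le\sup_{\xi\in(x-h,x]}|g(x,\xi)|,
\end{equation*}
and since $g(x,\cdot)$ is càglàd with $g(x,x)=0$ by hypothesis (3), the left-continuity forces $g(x,\xi)\to 0$ as $\xi\uparrow x$, so the supremum vanishes in the limit.

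The main subtlety I anticipate is bookkeeping of the atoms of $W$: the upper endpoint $x-h$ of the integration is itself moving, and, if $x$ is a jump point of $W$, the denominator $W(x)-W(x-h)$ does not tend to zero but to $W(x)-W(x-)>0$. This is precisely where conditions (1)--(3) are used in tandem: (2) provides uniform control that survives the moving endpoint, (1) guarantees left-continuity so that the residual tail $\int_{(x-h,x)}$ shrinks, and (3) ensures the atom contribution $D^{-}_{W,1}g(x,x)\bigl[W(x)-W(x-)\bigr]$ is zero, so that $\int_{(0,x)}$ and $\int_{(0,x]}$ agree. Once these three vanishing mechanisms are in place, combining the two pieces gives the stated identity.
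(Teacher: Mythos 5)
Your proposal is correct and follows essentially the same route as the paper's proof: the same splitting of the difference quotient into the boundary term over $(x-h,x]$, the uniform-convergence error term supplied by hypothesis (2), and the term $\int_{(0,x-h]}D^{-}_{W,1}g(x,\xi)\,dW(\xi)$, with hypotheses (1) and (3) handling the boundary piece and the possible atom of $dW$ at $x$. Your write-up is in fact slightly more explicit than the paper's (which is terse about the limit of the last term), but there is no substantive difference in method.
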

\begin{proof}
	Fix $x\in (0,1]$ and $h>0$ sufficiently small. Consider the quotient
	\begin{align*}
		\Delta(x,h) &=\frac{1}{W(x)-W(x-h)}\left(\int_{(0,x]}g(x,\xi)dW(\xi)-\int_{(0,x-h]}g(x-h,\xi)dW(\xi)\right)\\
		&= \frac{1}{W(x)-W(x-h)}\int_{(x-h,x]}g(x,\xi)dW(\xi)\\
		&\quad+\int_{(0,x-h]}\left[\frac{g(x,\xi)-g(x-h,\xi)}{W(x)-W(x-h)}-D_{W,1}^{-}g(x,\xi)\right]dW(\xi)\\
		&\quad+\int_{(0,x-h]}D_{W,1}^{-}g(x,\xi)dW(\xi) = E_{1}+E_{2}+E_{3}.
	\end{align*}
	From items 1 and 3, the term $E_{1}$ goes to zero. Moreover, from item 2, $E_{2}$ also vanishes. Again, from items 1 and 3, $E_{3}$ trivially goes to zero.
\end{proof}

The main point of Lemma \ref{leibnizrule} is the following result on the computation of the partial derivatives $D_{W,1}^{-}F_{n}(x,s)$ and $D^{-}_{W}F_{n}(x,x)$.

\begin{lemma}\label{taylorlemma}
	Let $G_{1}(x,s)=V(s)$ and recursively define
	\begin{equation}\label{Gnformula}
		G_{n}(x,s)=\begin{cases}
		\displaystyle\int_{(0,s]}\left[G_{n-1}(x,x)-G_{n-1}(x,\xi)\right]dW(\xi),\quad \text{n even};\\
		\displaystyle\int_{[0,s)}\left[G_{n-1}(x,x)-G_{n-1}(x,\xi)\right]dV(\xi),\quad \text{n odd}.
		\end{cases}
	\end{equation}
	Then,
\begin{equation*}
	\left\{
	\begin{array}{l}
		D^{-}_{W,1}F_{n}(\xi,s)=G_{n-1}(\xi,s),\\ [0.5em]
		D^{-}_{W}F_{n}(\xi,\xi)=G_{n-1}(\xi,\xi),
	\end{array}
	\right.
\end{equation*}
	where $D^{-}_{W,1}$ is as defined in item 2 of Lemma \ref{leibnizrule}.
\end{lemma}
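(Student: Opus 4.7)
The plan is to induct on $n$, carrying both identities together. For the base case $n=2$, the explicit formula $F_2(x,s) = V(s)W(x) - \int_{[0,s)} W(\xi)\,dV(\xi)$ immediately gives $D^{-}_{W,1}F_2(x,s) = V(s) = G_1(x,s)$; for the diagonal derivative, splitting the increment as
\[F_2(x,x) - F_2(x-h,x-h) = V(x-h)[W(x) - W(x-h)] + \int_{[x-h,x)}[W(x) - W(\xi)]\,dV(\xi)\]
and dividing by $W(x) - W(x-h)$ produces $V(x-h)$ plus a term of order $V(x) - V(x-h)$, which converges to $V(x) = G_1(x,x)$ by left-continuity of $V$.

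For the inductive step, assume both identities at level $n$ and first consider $n+1$ odd, so that $F_{n+1}(x,s) = \int_{(0,s]}[F_n(x,x) - F_n(x,\xi)]\,dW(\xi)$. To show $D^{-}_{W,1}F_{n+1}(x,s) = G_n(x,s)$, I would form the difference quotient in $x$, bring it inside the $dW$-integral, and pass to the limit using the inductive hypothesis; the pointwise limit of the integrand is $G_{n-1}(x,x) - G_{n-1}(x,\xi)$, and the resulting integral is precisely $G_n(x,s)$ by the definition of $G_n$ for even $n$. For the diagonal identity, I would apply Lemma \ref{leibnizrule} to $g(x,\xi) := F_n(x,x) - F_n(x,\xi)$: item 1 holds since $F_n(x,\cdot)$ is c\`agl\`ad for even $n$; item 2 is supplied by the inductive hypothesis, which identifies $D^{-}_{W,1}g(x,\xi) = G_{n-1}(x,x) - G_{n-1}(x,\xi)$ together with the required uniform convergence; and item 3 follows from $g(x,x) = 0 = D^{-}_{W,1}g(x,x)$. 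The lemma then yields $D^{-}_W F_{n+1}(x,x) = \int_{(0,x]}[G_{n-1}(x,x) - G_{n-1}(x,\xi)]\,dW(\xi) = G_n(x,x)$.

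When $n+1$ is even, Lemma \ref{leibnizrule} does not apply directly since the integration switches to $dV$ over $[0,s)$, so I would instead decompose $F_{n+1}(x,x) - F_{n+1}(x-h,x-h)$ as a main term (the integral over $[0,x-h)$ of the difference of quotients, converging to $G_n(x,x)$ by the inductive uniform hypothesis) plus a boundary term $\int_{[x-h,x)}[F_n(x,x) - F_n(x,\xi)]\,dV(\xi)/[W(x)-W(x-h)]$. The bound $F_n(x,x) - F_n(x,\xi) \le C[W(x) - W(\xi)]$ for $\xi$ near $x$, which follows from Lemma \ref{taylorbound} since $n$ is odd here, renders this boundary term $O(V(x) - V(x-h))$, which vanishes as $h \to 0^+$. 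The main obstacle throughout is propagating the uniform convergence of the difference quotients in the second variable, required both for differentiation under the integral sign and for item 2 of Lemma \ref{leibnizrule}; I would therefore strengthen the inductive hypothesis to include that $\sup_{s\in[0,1]}|[F_n(x,s) - F_n(x-h,s)]/[W(x) - W(x-h)] - G_{n-1}(x,s)| \to 0$ as $h \to 0^+$, and propagate this uniform convergence through the inductive step using the bounds from Lemma \ref{taylorbound} to dominate the relevant quotients.
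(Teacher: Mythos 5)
Your argument is correct, and it coincides with the paper's proof in its overall architecture (induction on $n$, the explicit formula \eqref{eqsindepend} for the base case, and differentiation under the $dW$-integral via Lemma \ref{leibnizrule} when $n+1$ is odd), but it handles the harder parity differently. For $n+1$ even, the paper does not work with the $dV$-integral directly: it first applies Fubini to rewrite
\begin{equation*}
F_{n+1}(x,x)=\int_{(0,x]}\bigl[F_{n-1}(x,x)-F_{n-1}(x,\alpha)\bigr]V(\alpha)\,dW(\alpha),
\end{equation*}
so that the moving endpoint again sits in a $dW$-integral and Lemma \ref{leibnizrule} applies verbatim with $g(x,\alpha)=[F_{n-1}(x,x)-F_{n-1}(x,\alpha)]V(\alpha)$; undoing the Fubini step then identifies the result with $G_n(x,x)$. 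You instead split the increment $F_{n+1}(x,x)-F_{n+1}(x-h,x-h)$ into a main term over $[0,x-h)$, controlled by the uniform convergence carried in the induction hypothesis, and a boundary term over $[x-h,x)$, killed by the estimate $F_n(x,x)-F_n(x,\xi)\le C\,[W(x)-W(\xi)]$ together with the left-continuity of $V$; your base-case diagonal computation is the same decomposition in miniature, whereas the paper gets it from integration by parts and the fundamental theorem of calculus. Both routes are valid: the paper's Fubini trick lets a single lemma absorb both parities and hides the boundary analysis inside Lemma \ref{leibnizrule}, while your version is more elementary and makes explicit exactly which uniformity must be propagated (the paper's induction hypothesis also contains this, phrased as ``exists uniformly in $s$''). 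One small imprecision: the bound $F_n(x,x)-F_n(x,\xi)\le C\,[W(x)-W(\xi)]$ for odd $n$ is not really an output of Lemma \ref{taylorbound}; it follows immediately from the recursion, since $F_n(x,x)-F_n(x,\xi)=\int_{(\xi,x]}[F_{n-1}(x,x)-F_{n-1}(x,\alpha)]\,dW(\alpha)\le F_{n-1}(x,x)\,[W(x)-W(\xi)]$. This does not affect the validity of your argument.
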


\begin{proof}
	We proceed by induction on $n$. For $n=2$,
	\begin{equation}\label{eqsindepend}
		F_{2}(x,s)=\int_{[0,s)}[W(x)-W(\xi)]dV(\xi)=V(s)W(x)-\int_{[0,s)}W(\xi)dV(\xi),
	\end{equation}
	which implies $D^{-}_{W,1}F_{2}(x,s)=V(s)=G_{1}(x,s)$. Moreover,
	$\frac{F_{2}(x-h,s)-F_{2}(x,s)}{W(x-h)-W(x)}\to V(s)$ as $h\to0$, with this limit holding uniformly in $s$. From the integration by parts formula (see Proposition \ref{intbypartsprop2}), it follows that
	$$F_{2}(x,x)=\int_{(0,x]}V(\alpha)dW(\alpha).$$
	Therefore, $D^{-}_{W}F_{2}(x,x)=V(x)=G_{1}(x,x)$. 

	Assume that for an odd index $n$, for all $k\le n$ and all $x$, $D^{-}_{W,1}F_{k}(x,s)=G_{k-1}(x,s)$ exists uniformly in $s$, and $D^{-}_{W}F_{k}(x,x)=G_{k-1}(x,x)$. Now, for $F_{n+1}(x,x)$, we can use Fubini's theorem and the definition of $F_{n}$ to obtain
	\begin{align}\label{fubinada}
		\nonumber F_{n+1}(x,x) &=\int_{[0,x)}F_{n}(x,x)-F_{n}(x,\xi)dV(\xi)\\ \nonumber 
		&= \int_{[0,x)}\left(\int_{(\xi,x]}F_{n-1}(x,x)-F_{n-1}(x,\alpha)dW(\alpha)\right)dV(\xi)
		\\ &=\int_{(0,x]}[F_{n-1}(x,x)-F_{n-1}(x,\alpha)]V(\alpha)dW(\alpha).
	\end{align}
	By the induction hypothesis, the function $g(x,\alpha)=[F_{n-1}(x,x)-F_{n-1}(x,\alpha)]V(\alpha)$ satisfies all the conditions in Lemma \ref{leibnizrule}, with $D^{-}_{W,1}g(x,\alpha)=[G_{n-2}(x,x)-G_{n-2}(x,\alpha)]V(\alpha)$. So,
	$$D^{-}_{W}F_{n+1}(x,x)=\int_{(0,x]}[G_{n-2}(x,x)-G_{n-2}(x,\alpha)]V(\alpha)dW(\alpha).$$
	From the same computation in (\ref{fubinada}), we obtain $D^{-}_{W}F_{n+1}(x,x)=G_{n}(x,x)$. The uniform existence of $D^{-}_{W,1}F_{n+1}(x,s)$ follows easily from the same argument as in (\ref{eqsindepend}) and the definition of $G_{n}(x,s)$. For the case where $n$ is even, the result is straightforward because the measure we are integrating with respect to is $dW$, so all computations reduce to a direct application of Lemma \ref{leibnizrule} or the dominated convergence theorem.
\end{proof}

\begin{remark}\label{rmk2impor}
	The statements in Lemmas \ref{taylorbound}, \ref{leibnizrule}, and \ref{taylorlemma} also have versions relative to the $V$-right derivative and càglàd (càdlàg) functions, and the adaptations are straightforward from the ideas presented above. Thus, the $V$-right derivative of $G_{n}(x,s)$ or $G_{n}(x,x)$ with respect to $x$ also exists and equals $F_{n-1}(x,s)$ or $F_{n-1}(x,x)$, respectively.
\end{remark}

The following corollary is a straightforward consequence of Lemma \ref{taylorlemma} and Remark \ref{rmk2impor}. It essentially says that, if we set $F_{0}(x,x)\equiv 1$ and start with $dW$, the function $F_{n}(x,x)$ is obtained by alternating $n$ successive integrations, starting from $F_{0}$, from $0$ to $x$ with respect to $dW$ and $dV$.

\begin{corollary}\label{rmk111}
	Let $(p_{0}(x),q_{0}(x))=(1,W(x))$ and, for $n>0$, recursively define $(p_{n+1}(x),q_{n+1}(x))$ by
	$$(p_{n+1}(x),q_{n+1}(x))=\left(\int_{(0,x]}\int_{[0,s)}p_{n}(\xi)dV(\xi)dW(s),\int_{(0,x]}\int_{[0,s)}q_{n}(\xi)dV(\xi)dW(s)\right).$$
	If we assume $F_{0}(x,x)\equiv 1$, then $$(p_{n}(x),q_{n}(x))=(F_{2n}(x,x),F_{2n+1}(x,x))$$
	for all $n\in\mathbb{N}$. An analogous characterization for $\{G_{n}\}_{n\in\mathbb{N}}$ is immediate from Remark \ref{rmk2impor}.
\end{corollary}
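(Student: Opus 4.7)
The plan is to prove both identities $p_n(x)=F_{2n}(x,x)$ and $q_n(x)=F_{2n+1}(x,x)$ simultaneously by induction on $n$, using Lemma \ref{taylorlemma}, Remark \ref{rmk2impor}, and the fundamental theorem of calculus for the generalized lateral derivatives. The base case $n=0$ is immediate: by the convention $F_0(x,x)\equiv 1$ we have $p_0(x)=1=F_0(x,x)$, and since $F_1(x,s)=W(s)$ we get $q_0(x)=W(x)=F_1(x,x)$.

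For the inductive step, I would proceed in two stages. First, by Lemma \ref{taylorlemma} applied at indices $2n+2$ and $2n+3$, the functions $x\mapsto F_{2n+2}(x,x)$ and $x\mapsto F_{2n+3}(x,x)$ are $W$-left differentiable with
\begin{equation*}
D^{-}_{W}F_{2n+2}(x,x)=G_{2n+1}(x,x),\qquad D^{-}_{W}F_{2n+3}(x,x)=G_{2n+2}(x,x),
\end{equation*}
so the fundamental theorem of calculus for $D^{-}_{W}$ combined with the fact that $F_{k}(0,0)=0$ for $k\geq 1$ gives
\begin{equation*}
F_{2n+2}(x,x)=\int_{(0,x]}G_{2n+1}(\xi,\xi)\,dW(\xi),\qquad F_{2n+3}(x,x)=\int_{(0,x]}G_{2n+2}(\xi,\xi)\,dW(\xi).
\end{equation*}
Second, by Remark \ref{rmk2impor} the analogous identity for $G_n$ reads $D^{+}_{V}G_{n}(x,x)=F_{n-1}(x,x)$, so the fundamental theorem of calculus for $D^{+}_{V}$ (with integration over $[0,x)$, matching the càglàd/$dV$ convention) together with $G_{k}(0,0)=0$ for $k\geq 1$ yields
\begin{equation*}
G_{2n+1}(\xi,\xi)=\int_{[0,\xi)}F_{2n}(\eta,\eta)\,dV(\eta),\qquad G_{2n+2}(\xi,\xi)=\int_{[0,\xi)}F_{2n+1}(\eta,\eta)\,dV(\eta).
\end{equation*}
Substituting these into the previous displays and invoking the induction hypothesis $F_{2n}(\eta,\eta)=p_n(\eta)$ and $F_{2n+1}(\eta,\eta)=q_n(\eta)$ produces exactly the defining recurrences of $p_{n+1}$ and $q_{n+1}$, closing the induction.

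The only delicate point I foresee is the careful invocation of the two one-sided fundamental theorems of calculus: one has to check that $\xi\mapsto G_{2n+1}(\xi,\xi)$ is càglàd (so the $dW$-integral on $(0,x]$ is the correct antiderivative for $D^{-}_{W}$) and that $\eta\mapsto F_{2n+1}(\eta,\eta)$ is càdlàg (so the $dV$-integral on $[0,x)$ is the correct antiderivative for $D^{+}_{V}$); both properties follow directly from the recursive definitions in \eqref{term:1} and \eqref{Gnformula}. The symmetric statement for $\{G_n\}_{n\in\mathbb{N}}$ claimed at the end of the corollary is obtained by interchanging the roles of $V,W$ and $F,G$ throughout the argument above, which is the content of Remark \ref{rmk2impor}.
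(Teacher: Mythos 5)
Your argument is correct and follows exactly the route the paper intends: the paper gives no explicit proof, presenting the corollary as a straightforward consequence of Lemma \ref{taylorlemma} and Remark \ref{rmk2impor}, which is precisely what you use (together with the one-sided fundamental theorems of calculus to turn the derivative identities $D^{-}_{W}F_{n}(x,x)=G_{n-1}(x,x)$ and $D^{+}_{V}G_{n}(x,x)=F_{n-1}(x,x)$ into the alternating $dW$/$dV$ integral recursion). Your flagged verification of the càdlàg/càglàd hypotheses for the diagonal maps is the right technical point to check, and it is indeed routine from the recursive definitions.
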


The following theorem will be our main tool to provide nontrivial examples of functions in $C^{\infty}_{W,V}(\mathbb{T})$.

\begin{theorem}\label{analyticalVW}
	Let $f\in C^{\infty}_{W,V}([0,1])$ be a function such that 
	\begin{equation}\label{coefconducn}
		\max\left\{\|D^{(2n)}_{W,V}f\|_{\infty},\|D^{(2n+1)}_{W,V}f\|_{\infty}\right\}\le c_{n},
	\end{equation}
	where $\|\cdot\|_{\infty}$ denotes the sup-norm in $C^{\infty}_{W,V}([0,1])$ and $c_{n}=o\left(\frac{n!}{e^{n}}\right)$. Then, we have the following expansion for $f$:
	\begin{equation}\label{sdetaylor}
		f(x)=f(0)+\sum_{k=1}^{\infty}\left[D^{(k)}_{W,V}f(0)\right]F_{k}(x,x).
	\end{equation}
	The convergence in \eqref{sdetaylor} is uniform over $[0,1]$. 

	Moreover, in order for $f$ and $D^{(n)}_{W,V}f$ to be well defined on $\mathbb{T}$, the following set of equations must be satisfied:
	\begin{equation*}
	\left\{
	\begin{array}{l}
		\displaystyle\sum_{k=1}^{\infty} D^{(k)}_{W,V}f(0)\, F_{k}(1,1) = 0, \\ [0.5em]
		\displaystyle\sum_{k=1}^{\infty} D^{(k+1)}_{W,V}f(0)\, G_{k}(1,1) = 0, \\ [0.5em]
		\displaystyle\sum_{k=1}^{\infty} D^{(k+2)}_{W,V}f(0)\, F_{k}(1,1) = 0, \\
		\hspace{2em}\vdots
	\end{array}
	\right.
	\end{equation*}
\end{theorem}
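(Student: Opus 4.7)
The plan is to use the exact finite expansion with remainder from the preceding proposition, control the remainder uniformly via the factorial-decay bounds on $F_{n}(x,x)$ given by Lemma~\ref{taylorbound}, and then invoke the growth hypothesis $c_{n}=o(n!/e^{n})$ to pass to the limit. The uniform convergence of the full series on $[0,1]$ will then follow from the same estimates by a Weierstrass $M$-test, and the compatibility conditions on $\mathbb{T}$ will be obtained by applying $D^{-}_{W}$ and $D^{+}_{V}$ termwise to \eqref{sdetaylor} and evaluating at $x=1$.

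For the remainder, the preceding proposition writes $I_{n}(x)$ as an integral, against $dW$ or $dV$, of a high-order derivative of $f$ weighted by a nonnegative expression of the form $F_{n}(x,x)-F_{n}(x,s)$; by the very recursion defining $F_{n+1}$, this integral is controlled by $F_{n+1}(x,x)$. Lemma~\ref{taylorbound} then yields $F_{2k}(x,x)\le F_{2}(1,1)^{k}/k!$ and $F_{2k+1}(x,x)\le C\,F_{2}(1,1)^{k}/k!$, which combined with $\|D^{(m)}_{W,V}f\|_{\infty}\le c_{\lfloor m/2\rfloor}$ gives $|I_{n}(x)|\le C\,c_{\lfloor n/2\rfloor}\,F_{2}(1,1)^{\lfloor n/2\rfloor}/\lfloor n/2\rfloor!$ uniformly in $x\in[0,1]$. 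The hypothesis $c_{k}=o(k!/e^{k})$ makes the right-hand side $o\bigl((F_{2}(1,1)/e)^{k}\bigr)$, which tends to zero uniformly. The same pointwise bound shows that each term $|D^{(k)}_{W,V}f(0)F_{k}(x,x)|$ is dominated on $[0,1]$ by a summable majorant, so the series in \eqref{sdetaylor} converges absolutely and uniformly by the Weierstrass $M$-test; combined with the vanishing of $I_{n}$, this proves the expansion.

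For the compatibility conditions, since $f$ and all its iterated derivatives $D^{(m)}_{W,V}f$ are defined on $\mathbb{T}$, we have $D^{(m)}_{W,V}f(1)=D^{(m)}_{W,V}f(0)$ for every $m\ge 0$. Evaluating \eqref{sdetaylor} at $x=1$ and subtracting $f(0)$ yields the first equation. Applying $D^{-}_{W}$ termwise to \eqref{sdetaylor} is legitimate because Lemma~\ref{taylorlemma} gives $D^{-}_{W}F_{k}(x,x)=G_{k-1}(x,x)$ (with $G_{0}\equiv 1$), and by Remark~\ref{rmk2impor} the sequence $G_{k}$ satisfies the analogue of Lemma~\ref{taylorbound}; the differentiated series is therefore uniformly convergent by the same $M$-test. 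Evaluating it at $x=1$ and using $D^{-}_{W}f(1)=D^{-}_{W}f(0)$ yields the second equation. A further application of $D^{+}_{V}$, again justified by the corresponding termwise identity $D^{+}_{V}G_{k}(x,x)=F_{k-1}(x,x)$ from Remark~\ref{rmk2impor}, produces the third equation, and the remaining equations follow by iterating this alternating procedure.

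The main obstacle is the quantitative interplay between the factorial decay of $F_{n}(x,x)$ provided by Lemma~\ref{taylorbound} and the prescribed growth of $c_{n}$: the hypothesis $c_{n}=o(n!/e^{n})$ is designed precisely to beat the geometric factor $F_{2}(1,1)^{n}$ appearing in the estimate. A secondary difficulty is that each successive compatibility condition requires termwise differentiation, so one must verify that the Lemma~\ref{taylorbound}-type bounds transfer cleanly to $G_{n}$ via Remark~\ref{rmk2impor}, ensuring that every differentiated series still admits a uniformly summable majorant on $[0,1]$.
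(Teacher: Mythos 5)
Your argument follows essentially the same route as the paper's proof: the finite expansion with remainder $I_n$ from the preceding proposition, the estimate $|I_n(x)|\le C\,\|D^{(n+1)}_{W,V}f\|_{\infty}\,F_2(1,1)^{\lfloor n/2\rfloor}/\lfloor n/2\rfloor!$ obtained from Lemma~\ref{taylorbound}, the hypothesis $c_n=o(n!/e^n)$ to force the remainder to vanish uniformly (with the M-test giving absolute convergence), and Lemma~\ref{taylorlemma} together with Remark~\ref{rmk2impor} to differentiate termwise and extract the periodicity conditions by alternating $D^{-}_{W}$ and $D^{+}_{V}$ and evaluating at $x=1$. The only caveat is that your step ``$o\bigl((F_2(1,1)/e)^{k}\bigr)\to 0$'' tacitly needs $F_2(1,1)\le e$, but this is exactly the same implicit leap made in the paper's own proof, so your proposal matches it in both substance and level of rigor.
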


\begin{proof}
	Let us estimate the remainder $I_{n}$ in \eqref{taylorpolynomial} in terms of $\|D^{n}f\|_{\infty}$. From Lemma \ref{taylorbound}, it follows that
	\begin{align*}
		|I_{2n}(x)|&=\left|\int_{(0,x]}D_{W,V}^{(2n+1)}f(s)\left[F_{2n}(x,x)-F_{2n}(x,s)\right]dW(s)\right|\le \frac{(F_{2}(1-,1-))^{n}}{n!}\|D_{W,V}^{(2n+1)}f\|_{\infty}
	\end{align*}
	and 
	\begin{align*}
		|I_{2n+1}(x)|&=\left|\int_{(0,x]}D_{W,V}^{(2n+2)}f(s)\left[F_{2n+1}(x,x)-F_{2n+1}(x,s)\right]dW(s)\right|\\
		&\le \|D_{W,V}^{(2n+2)}f\|_{\infty}\int_{[0,x)}\left[F_{2n+1}(x,x)-F_{2n+1}(x,s)\right]dV(s) \\
		&=\|D_{W,V}^{(2n+2)}f\|_{\infty}\int_{[0,x)}\left(\int_{(s,x]}F_{2n}(x,x)-F_{2n}(x,\alpha)dW(\alpha)\right)dV(s) \\
		&\le \|D_{W,V}^{(2n+2)}f\|_{\infty}\int_{[0,x)}F_{2n}(x,x)-F_{2n}(x,\alpha+) dF_{2}(s) \\
		&\le \|D_{W,V}^{(2n+2)}f\|_{\infty}\frac{F_{2}(1-,1-)^{n+1}}{(n+1)!}.
	\end{align*}
	The previous estimates for $I_{n}(x)$, together with (\ref{coefconducn}), yield the uniform convergence of 
	$$S_{n}(x):=f(0)+\sum_{k=1}^{n-1}\left[D^{(k)}_{W,V}f(0)\right]F_{k}(x,x)$$ 
	to $f(x)$. Finally, the conditions that $f$ and $D_{W,V}^{n}f$ must satisfy to be well-defined on the torus follow from Lemma \ref{taylorlemma} and the uniform convergence of $S_{n}(x)$.
\end{proof} 

The ideas of this section may be adapted to the domain $\mathbb{R}$, as long as we require our results to hold on each closed interval, not necessarily symmetric around the origin. However, such non-symmetric cases are not relevant for the applications we have in mind in this work.


\section{Characterization of the eigenvectors of $-\Delta_{W,V}$}\label{sect42}

In this section, we use the result of Lemma \ref{taylorbound} to define generalized analogues of the usual trigonometric functions $\cos(\alpha x)$ and $\sin(\alpha x)$. These generalized trigonometric functions will be used to fully characterize the eigenvectors of $-\Delta_{W,V}$.

Let $\alpha>0$ and define the following functions from $\mathbb{R}$ to $\mathbb{R}$:
\begin{equation*}
C_{W,V}(\alpha,x)=\sum_{n=0}^{\infty}\alpha^{2n}(-1)^{n}F_{2n}(x,x),\qquad S_{W,V}(\alpha,x)=\sum_{n=0}^{\infty}\alpha^{2n+1}(-1)^{n}F_{2n+1}(x,x)
\end{equation*}
and
\begin{equation*}
C_{V,W}(\alpha,x)=\sum_{n=0}^{\infty}\alpha^{2n}(-1)^{n}G_{2n}(x,x),\qquad S_{V,W}(\alpha,x)=\sum_{n=0}^{\infty}\alpha^{2n+1}(-1)^{n}G_{2n+1}(x,x).
\end{equation*}

\begin{remark}
	The functions $C_{W,V}$ and $S_{W,V}$ (or $C_{V,W}$ and $S_{V,W}$) are well-defined. Indeed, by Lemma \ref{taylorbound}, each of these series converges uniformly and absolutely on every compact interval of $\mathbb{R}$.
\end{remark}

Notice that when $V(x)=W(x)=x$, the functions $C_{W,V}$ and $S_{W,V}$ (or $C_{V,W}$ and $S_{V,W}$) coincide with $\cos(\alpha x)$ and $\sin(\alpha x)$, respectively. Using Lemma \ref{leibnizrule} and the uniform convergence on compacts, it is straightforward to derive the following relations:
\begin{equation}\label{difcossinWV}
D^{-}_{W}C_{W,V}(\alpha,x)=-\alpha S_{V,W}(\alpha,x),\qquad D^{-}_{W}S_{W,V}(\alpha,x)=\alpha C_{V,W}(\alpha,x)
\end{equation}
and
\begin{equation}\label{difsincosVW}
D^{+}_{V}C_{V,W}(\alpha,x)=-\alpha S_{W,V}(\alpha,x),\qquad D^{+}_{V}S_{V,W}(\alpha,x)=\alpha C_{W,V}(\alpha,x)
\end{equation}
These follow directly. Furthermore, we can use the relations \eqref{difcossinWV} and \eqref{difsincosVW} to see that $S_{W,V}(\alpha,x)$ is the unique solution in $C^{\infty}_{W,V}(\mathbb{R})$ of
\begin{equation}\label{sineqpart}
	\left\{ \begin{array}{ll}
		-\Delta_{W,V}u=\alpha^2 u;\\
		u(0)=0,\, D^{-}_{W}u(0)=\alpha. \end{array} \right.
\end{equation}
and that $C_{W,V}(\alpha,x)$ is the unique solution in $C^{\infty}_{W,V}(\mathbb{R})$ of
\begin{equation}\label{coseqpart}
\left\{ \begin{array}{ll}
	-\Delta_{W,V}u=\alpha^2 u;\\
	u(0)=1,\, D^{-}_{W}u(0)=0. \end{array} \right.
\end{equation}

Now, observe that since $S_{W,V}(\alpha,\cdot)$ solves \eqref{sineqpart} and $C_{W,V}(\alpha,\cdot)$ solves \eqref{coseqpart}, it is easy to see that for any $\alpha\neq 0$, $S_{W,V}(\alpha,\cdot)$ and $C_{W,V}(\alpha,\cdot)$ are linearly independent. Consequently, any solution of $-\Delta_{W,V}u=\alpha^2 u$ is given by
\begin{equation*}
u(x)=A\, C_{W,V}(\alpha,x)+B\, S_{W,V}(\alpha,x)
\end{equation*}
where $A$ and $B$ are determined by the initial conditions at $x=0$.

Our next result characterizes the eigenvectors of $-\Delta_{W,V}$ on the torus using the functions $C_{W,V}(\alpha,\cdot)$ and $S_{W,V}(\alpha,\cdot)$. The reader is invited to compare this characterization with the Taylor expansions of the eigenvectors of the standard Laplacian, $-\Delta$, on the torus.

\begin{proposition}\label{caraceige}
	If $(\lambda_{i},\nu_{i})_{i>0}$ satisfy 
	(\ref{eigeneq}), then there exist $a_i,b_i\in\mathbb{R}$ such that
	\begin{equation}\label{s01}
		\nu_{i}(x)=a_{i}C_{W,V}\left(\sqrt{\lambda_{i}},x\right)+\dfrac{b_{i}}{\sqrt{\lambda_{i}}}S_{W,V}\left(\sqrt{\lambda_{i}},x\right),
	\end{equation}
	$V$-a.e. Furthermore, for each $i$, the vector $(a_{i},b_{i})\neq(0,0)$ is obtained as a solution of the system:
	\begin{equation}\label{bcondi}
		\begin{cases}
			a_{i}\left[C_{W,V}\left(\sqrt{\lambda_{i}},1\right)-1\right]+\dfrac{b_{i}}{\sqrt{\lambda_{i}}}S_{W,V}(\sqrt{\lambda_{i}},1)=0; \\
			\dfrac{a_{i}}{\sqrt{\lambda_{i}}}S_{V,W}(\sqrt{\lambda_{i}},1)-\dfrac{b_{i}}{\lambda_{i}}\left[C_{V,W}\left(\sqrt{\lambda_{i}},1\right)-1\right]=0.
		\end{cases}
	\end{equation}
\end{proposition}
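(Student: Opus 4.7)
The plan is to apply the generalized Maclaurin expansion of Theorem~\ref{analyticalVW} directly to a pointwise representative of $\nu_i$ and then rearrange the resulting series into the trigonometric form. I would take $\tilde{\nu}_i$ to be the representative of $\nu_i$ produced by the integral formula~\eqref{c422}, and set $a_i := \tilde{\nu}_i(0)$, $b_i := D^{-}_W \tilde{\nu}_i(0)$. The eigenvalue equation $\Delta_{W,V}\tilde{\nu}_i = -\lambda_i \tilde{\nu}_i$ will iterate to
\begin{equation*}
D^{(2k)}_{W,V}\tilde{\nu}_i(x) = (-\lambda_i)^k \tilde{\nu}_i(x), \qquad D^{(2k+1)}_{W,V}\tilde{\nu}_i(x) = (-\lambda_i)^k D^{-}_W \tilde{\nu}_i(x),
\end{equation*}
so the pointwise derivatives at the origin are completely determined by $a_i$ and $b_i$.

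\textbf{Bootstrap.} The first substantive task is to verify $\tilde{\nu}_i \in C^{\infty}_{W,V}(\mathbb{T})$ with controlled growth. Using $\mathfrak{f} = -\lambda_i \nu_i \in L^2_V(\mathbb{T})$ in~\eqref{c422}, a Cauchy--Schwarz estimate of the double integral gives boundedness of $\tilde{\nu}_i$ on $\mathbb{T}$; the identity $D^{-}_W \tilde{\nu}_i(x) = b_i + \int_{[0,x)} \mathfrak{f}(s)\,dV(s)$ likewise yields boundedness and c\`agl\`ad regularity of $D^{-}_W \tilde{\nu}_i$. The two iterate formulae above then propagate the correct c\`adl\`ag/c\`agl\`ad structure to every order, establishing $\tilde{\nu}_i \in C^{\infty}_{W,V}(\mathbb{T})$ together with the bound
\begin{equation*}
\max\!\left\{\|D^{(2n)}_{W,V}\tilde{\nu}_i\|_\infty,\|D^{(2n+1)}_{W,V}\tilde{\nu}_i\|_\infty\right\} \le C\,\lambda_i^{\,n}.
\end{equation*}
Since $C\lambda_i^n$ grows only geometrically in $n$, the hypothesis $c_n = o(n!/e^n)$ of Theorem~\ref{analyticalVW} is comfortably satisfied. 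I expect the regularity bookkeeping here to be the main technical obstacle, as the c\`adl\`ag/c\`agl\`ad structure near discontinuity points of $W$ and $V$ must be checked level by level before iterating.

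\textbf{Conclusion.} Theorem~\ref{analyticalVW} will then give
\begin{equation*}
\tilde{\nu}_i(x) = a_i + \sum_{k=1}^\infty D^{(k)}_{W,V}\tilde{\nu}_i(0)\, F_k(x,x).
\end{equation*}
Splitting this into even and odd $k$, substituting the explicit values of $D^{(k)}_{W,V}\tilde{\nu}_i(0)$ in terms of $a_i$ and $b_i$, using $F_0 \equiv 1$ (Corollary~\ref{rmk111}), and recognizing the definitions of $C_{W,V}$ and $S_{W,V}$, the right-hand side collapses to $a_i C_{W,V}(\sqrt{\lambda_i}, x) + (b_i/\sqrt{\lambda_i})\, S_{W,V}(\sqrt{\lambda_i}, x)$, yielding~\eqref{s01}. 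For~\eqref{bcondi}, I would feed the same substitutions into the first two periodicity identities of Theorem~\ref{analyticalVW} (namely $\sum_k D^{(k)}_{W,V}\tilde{\nu}_i(0)\, F_k(1,1) = 0$, encoding $\tilde{\nu}_i(1) = \tilde{\nu}_i(0)$, and $\sum_k D^{(k+1)}_{W,V}\tilde{\nu}_i(0)\, G_k(1,1) = 0$, encoding periodicity of $D^{-}_W \tilde{\nu}_i$), then re-index and invoke the definitions of $C_{V,W}$ and $S_{V,W}$ to obtain the system~\eqref{bcondi}. Non-triviality $(a_i, b_i) \neq (0,0)$ is automatic, since otherwise the entire Maclaurin expansion would collapse to zero, contradicting $\nu_i \not\equiv 0$.
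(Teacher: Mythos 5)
Your proposal is correct, but it distributes the work differently from the paper. The paper never verifies $\nu_i\in C^{\infty}_{W,V}(\mathbb{T})$ up front and never invokes Theorem~\ref{analyticalVW} as a black box: it starts from the defining identity of the domain, $\nu_i(x)=a_i+b_iW(x)-\lambda_i\int_{(0,x]}\int_{[0,s)}\nu_i\,dV\,dW$ (see \eqref{caracauto}, i.e.\ \eqref{c422} with $\mathfrak{f}=-\lambda_i\nu_i$), substitutes this identity into itself recursively, and bounds the resulting $n$-fold iterated-integral remainder $R_n$ exactly as the remainder $I_n$ is bounded in Theorem~\ref{analyticalVW}, using only the boundedness of the c\`adl\`ag representative of $\nu_i$ on the torus; the boundary system \eqref{bcondi} is then obtained by running the same iteration inside the domain conditions \eqref{bondcondaq}. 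You instead front-load a regularity bootstrap — showing $\tilde\nu_i\in C^{\infty}_{W,V}$ with $\|D^{(2n)}_{W,V}\tilde\nu_i\|_\infty,\|D^{(2n+1)}_{W,V}\tilde\nu_i\|_\infty\le C\lambda_i^n$, which is essentially a re-derivation of the regularity statement of \cite[Theorem 6]{keale} (equivalently, the content of Corollary~\ref{regeigenvect}, which you rightly do not cite, as that would be circular since the paper deduces it from this proposition) — and then apply Theorem~\ref{analyticalVW} verbatim, reading \eqref{bcondi} off its periodicity identities. What your route buys is modularity and explicit geometric derivative bounds that make the hypothesis $c_n=o(n!/e^n)$ trivially satisfied; what the paper's route buys is economy, since it needs nothing beyond boundedness of the eigenfunction and reuses the remainder estimate directly on the fixed-point iteration. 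The final algebra (splitting even/odd indices, identifying $C_{W,V}$ and $S_{W,V}$, and the nontriviality of $(a_i,b_i)$) is the same in both arguments, and your derivation of the second equation of \eqref{bcondi} from periodicity of $D^-_W\tilde\nu_i$ via the identity $\int_{\mathbb{T}}\nu_i\,dV=0$ is consistent with the paper's.
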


\begin{proof}
	Let $(\nu_{i},\lambda_{i})$ be an eigenpair of $-\Delta_{W,V}$ with $\lambda_{i}>0$. As $\nu_{i}\in \mathcal{D}(-\Delta_{W,V})$, it follows from Definition \ref{defdomlap} that
	\begin{equation}\label{caracauto}
		\nu_{i}(x)=a_{i}+b_{i}W(x)-\lambda_{i}\int_{(0,x]}\int_{[0,s)}\nu_{i}(\xi)dV(\xi)dW(s),
	\end{equation}
	with $a_{i}$ and $b_{i}$ fulfilling the relations 
	\begin{equation}\label{bondcondaq}
		\begin{cases}
			\int_{\mathbb{T}}\nu_{i}dV=0;\\
			b_{i}W(1)+\int_{\mathbb{T}}\int_{[0,s)}\nu_{i}(\xi)dV(\xi)dW(s)=0.
		\end{cases}
	\end{equation}
	From Corollary \ref{rmk111} we have 
	\begin{equation}
F_{1}(x,x)=W(x), \quad F_{2}(x,x)= \int_{[0,x)} V(s)dW(s), \quad F_{3}(x,x)=\int_{(0,x]}\int_{(0,s]} W(\xi) dV(\xi)dW(s), \ldots
	\end{equation}
	Therefore, if we recursively substitute (\ref{caracauto}) into itself, it is easy to see from the previous characterization of $F_{n}$ that, up to a set of measure zero, we may write $\nu_{i}=S_{n}+R_{n}$,
	where $S_{n}$ has the form $S_{n}(x):=\nu_{i}(0)+\sum_{k=1}^{n-1}c_{k}F_{k}(x,x)$ and $R_{n}$ is the result of $n$ iterations of the integral with respect to the product measure $dV\otimes dW(d\xi,ds)$ on the function $1_{(0,s]}(\xi)\nu_{i}(\xi)$ over the interval $(0,x]\times [0,x)$. Since $\nu_{i}$ is bounded on $\mathbb{T}$ (because it is a well-defined càdlàg function on the torus), we may estimate $R_{n}$ similarly to how we estimate $I_{n}$ in Theorem \ref{analyticalVW} to obtain the uniform convergence of $S_{n}$ to $\nu_{i}$. Indeed, it is straightforward to compute $D^{(n)}_{W,V}\nu_{i}(0)$ and determine the coefficients $c_{k}$ in the analytic representation of $\nu_{i}$. Moreover, taking $s=0$ in Lemma \ref{taylorbound}, and using that $F_{2n+1}$ is determined from $F_{2n}$, it follows from the Weierstrass M-test that $S_{n}$ also converges absolutely. By absolute convergence, we may split the series representation of $\nu_{i}$ into the sum of the series containing the even terms and the series containing the odd terms. The expressions obtained after such decomposition yield the representation (\ref{caracauto}). Using the same iterative procedure, we may rewrite the conditions (\ref{bondcondaq}) according to (\ref{bcondi}).
\end{proof}

As a direct consequence of Proposition \ref{caracauto}, we obtain a strong version of \cite[Theorem 4]{keale} regarding the regularity of eigenvectors of $-\Delta_{W,V}$.

\begin{corollary}[Regularity of the eigenvectors]\label{regeigenvect}
	If $u\in L^{2}_{V}(\mathbb{T})\setminus \{0\}$ satisfies
	\begin{equation*}
		\begin{cases}
			\Delta_{W,V}u=\lambda u;\\
			u\in \mathcal{D}_{W,V}(\mathbb{T}).
		\end{cases}
	\end{equation*}
	for some $\lambda>0$, then there exists $v\in C^{\infty}_{W,V}(\mathbb{T})\subset L^{2}_{V}(\mathbb{T})$ such that $u=v$ (V-a.e.).
\end{corollary}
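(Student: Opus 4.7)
The plan is to show that the function
\[
v(x) := a\, C_{W,V}\bigl(\sqrt{\lambda},x\bigr) + \frac{b}{\sqrt{\lambda}}\, S_{W,V}\bigl(\sqrt{\lambda},x\bigr),
\]
furnished by Proposition \ref{caraceige} with $(a,b)\neq(0,0)$ satisfying (\ref{bcondi}), already lies in $C^\infty_{W,V}(\mathbb{T})$. Since that proposition gives $u = v$ $V$-a.e., the corollary reduces to a pure regularity and periodicity check on the explicit representative $v$, with no further need to touch $u$.

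First I would verify that $v$ and $D^-_W v$ descend to well-defined periodic functions on $\mathbb{T}$. The identity $v(1) - v(0) = 0$ is precisely the first line of (\ref{bcondi}). Using (\ref{difcossinWV}) one computes
\[
D^-_W v(x) = -\sqrt{\lambda}\, a\, S_{V,W}\bigl(\sqrt{\lambda},x\bigr) + b\, C_{V,W}\bigl(\sqrt{\lambda},x\bigr),
\]
and since $S_{V,W}(\sqrt{\lambda},0) = 0$ and $C_{V,W}(\sqrt{\lambda},0) = 1$, the condition $D^-_W v(1) = D^-_W v(0)$ is equivalent, after multiplication by $\lambda$, to the second line of (\ref{bcondi}). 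Thus the two boundary relations in Proposition \ref{caraceige} are exactly what makes $v$ and its $W$-left derivative single-valued on the torus.

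Next I would iterate (\ref{difcossinWV})--(\ref{difsincosVW}) to obtain the closed-form identities
\[
D^{(2k)}_{W,V} v = (-\lambda)^k\, v, \qquad D^{(2k+1)}_{W,V} v = (-\lambda)^k\, D^-_W v,
\]
valid for every $k \geq 0$. These show that every iterated derivative exists pointwise on $[0,1]$ and is a linear combination of either $\{C_{W,V}, S_{W,V}\}$ or $\{C_{V,W}, S_{V,W}\}$. The càdlàg/càglàd alternation required for membership in $C_{\sigma(n)}(\mathbb{T})$ is inherited from the one-sided structure of $F_n$ and $G_n$ encoded in Lemma \ref{taylorlemma} and Corollary \ref{rmk111}, and periodicity of every $D^{(n)}_{W,V} v$ is automatic from the periodicity of $v$ and $D^-_W v$ established above.

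It remains to check the zero-mean conditions that define $C^n_{W,V,0}(\mathbb{T})$. By Definition \ref{defdomlap}, $\Delta_{W,V} u \in L^2_{V,0}(\mathbb{T})$; together with $\Delta_{W,V} v = -\lambda v$ and $\lambda > 0$ this yields $\int_\mathbb{T} v\, dV = 0$. Likewise $\int_\mathbb{T} D^-_W v\, dW = v(1) - v(0) = 0$ by the fundamental theorem of calculus and the periodicity just proved. The proportionality from the previous step then transfers both zero-mean conditions to every $D^{(n)}_{W,V} v$, placing $v$ in $\bigcap_{n \geq 1} C^n_{W,V,0}(\mathbb{T}) \subset C^\infty_{W,V}(\mathbb{T})$. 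I expect the main subtlety to be the careful bookkeeping of the càdlàg/càglàd alternation across iterated derivatives; everything else is a direct consequence of Proposition \ref{caraceige} and the generalized trigonometric identities already in hand.
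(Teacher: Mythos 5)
Your proposal is correct and follows essentially the same route as the paper: the paper deduces the corollary directly from Proposition \ref{caraceige} together with the fact that $C_{W,V}(\sqrt{\lambda},\cdot)$ and $S_{W,V}(\sqrt{\lambda},\cdot)$ solve \eqref{coseqpart}--\eqref{sineqpart} in $C^{\infty}_{W,V}$, which is exactly the content you verify. Your write-up simply makes explicit the periodicity, derivative-proportionality, and zero-mean checks that the paper treats as immediate.
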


At first sight, it might seem that Corollary \ref{regeigenvect} is stronger than the result provided in \cite[Theorem 6]{keale}. However, by taking an additional step in the argument of \cite[Theorem 6]{keale}, we can arrive at the same conclusion. Indeed, what was proved in \cite[Theorem 6]{keale} is that for every $n\in \mathbb{N}$, there exists $v_n\in C^{n}_{W,V}(\mathbb{T})$ such that $u=v_n$ (V-a.e.). However, since $v_1 = v_n$ $V$-a.e. for every $n\in \mathbb{N}$, this means that they are equal on a dense set of the torus, and since they are right-continuous, they are equal at every point of the torus. Therefore, $v_1\in C_{W,V}^{\infty}(\mathbb{T})$ and $u=v_1$ (V-a.e.). Here, we obtained this result in a direct and more natural manner than in \cite[Theorem 6]{keale}.

\section{Traceability of $\Delta_{W,V}$ under Dirichlet Boundary Conditions}\label{sec:trace}
To simplify notation, throughout this section we assume that $W(0) = 0$. We consider the space $H_{W,V}(\mathbb{T})$ equipped with a Dirichlet-type condition. Fix a point in $\mathbb{T}$ and identify it with zero, using the standard correspondence between the torus and the interval $[0,1)$. Define
\begin{equation*}
H_{W,V,\mathcal{D}}(\mathbb{T}) = \{ f \in H_{W,V}(\mathbb{T}) : f(0) = 0 \}
\end{equation*}
and endow this space with the inner product
\begin{equation*}
\langle f, g \rangle_{W,V,\mathcal{D}} := \int_{\mathbb{T}} D_W^- f \, D_W^- g \, dW.
\end{equation*}
It is straightforward to verify that $\left(H_{W,V,\mathcal{D}}(\mathbb{T}), \langle \cdot, \cdot \rangle_{W,V,\mathcal{D}}\right)$ is a Hilbert space.

For any $f \in H_{W,V}(\mathbb{T})$, it is immediate that
\begin{equation*}
\|f\|_{W,V,\mathcal{D}} \leq \|f\|_{W,V}.
\end{equation*}
On the other hand, by applying Theorem~\ref{sobchar2}, the elementary inequality $(a+b)^2 \leq 2(a^2 + b^2)$, and Jensen's inequality, we deduce the existence of a constant $C > 0$ such that
\begin{equation*}
\|f\|^2_{L^2_V(\mathbb{T})} \leq C \left( (f(0))^2 + \|D_W^- f\|^2_{L^2_W(\mathbb{T})} \right).
\end{equation*}
This demonstrates that the norms $\|\cdot\|_{W,V,\mathcal{D}}$ and $\|\cdot\|_{W,V}$ are equivalent on $H_{W,V,\mathcal{D}}(\mathbb{T})$.

\begin{remark}
As discussed in \cite[Section 7]{keale}, the space $H_{W,V,\mathcal{D}}(\mathbb{T})$ does not depend on $V$ and can be embedded in any $L^{2}_{V}(\mathbb{T})$.
\end{remark}

Let us now recall the definition of the $W$-Brownian bridge introduced in \cite{keale}.

\begin{definition}[$W$-Brownian bridge]
Fix a point in $\mathbb{T}$ and label it as zero. The $W$-Brownian bridge on $[0,1) \cong \mathbb{T}$, denoted by $B_{W,0}(\cdot)$, is the centered Gaussian process with covariance function
\begin{equation*}
\varrho_{W,0}(t,s) = W(t \wedge s) - \frac{W(t)W(s)}{W(1)}, \quad t,s \in [0,1).
\end{equation*}
\end{definition}

The first result we need is the joint measurability of the $W$-Brownian bridge.

\begin{proposition}\label{prop:jointmeas}
The $W$-Brownian bridge $B_{W,0}(\cdot)$ admits a version that is a jointly measurable process.
\end{proposition}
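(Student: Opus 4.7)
The plan is to construct an explicit version of $B_{W,0}$ by time-changing a standard Brownian motion; from this concrete representation joint measurability follows almost automatically. I first want to emphasize why the usual shortcut via Doob's theorem (which deduces joint measurability from stochastic continuity) is not available in this setting: at any discontinuity point $t_0$ of $W$ one has
\begin{equation*}
E[(B_{W,0}(t_0)-B_{W,0}(t_0-h))^2]=(W(t_0)-W(t_0-h))-\frac{(W(t_0)-W(t_0-h))^2}{W(1)},
\end{equation*}
which does not vanish as $h\to 0^+$. Since $W$ is only required to be c\`adl\`ag and may have a dense set of jumps, the process specified by its covariance is in general not stochastically continuous, so an alternative route is needed.

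The construction goes as follows. I would take a standard Brownian motion $\tilde B=\{\tilde B(u):u\in[0,W(1)]\}$ on some probability space $(\Omega,\mathcal F,P)$, realized with continuous sample paths; continuity together with measurability of $\omega\mapsto\tilde B(u,\omega)$ for each $u$ yields joint measurability of $(u,\omega)\mapsto\tilde B(u,\omega)$. Because $W:[0,1)\to[0,W(1)]$ is monotone, it is Borel measurable, hence $(t,\omega)\mapsto(W(t),\omega)$ is measurable, and the composition
\begin{equation*}
B_W(t,\omega):=\tilde B(W(t),\omega)
\end{equation*}
is jointly measurable. I then set
\begin{equation*}
B_{W,0}(t,\omega):=B_W(t,\omega)-\frac{W(t)}{W(1)}\,B_W(1,\omega),
\end{equation*}
which is again jointly measurable since $t\mapsto W(t)/W(1)$ is Borel and joint measurability is preserved under sums and multiplication by deterministic Borel coefficients.

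To finish, I must verify that this candidate has the prescribed law. It is a linear transformation of a Gaussian process, hence automatically centered Gaussian, so only the covariance needs checking. Using $E[\tilde B(u)\tilde B(v)]=u\wedge v$ and the monotonicity of $W$, I obtain $E[B_W(t)B_W(s)]=W(t)\wedge W(s)=W(t\wedge s)$; expanding
\begin{equation*}
E[B_{W,0}(t)B_{W,0}(s)]=W(t\wedge s)-\frac{W(t)W(s)}{W(1)}-\frac{W(s)W(t)}{W(1)}+\frac{W(t)W(s)}{W(1)^2}\,W(1)
\end{equation*}
and collecting terms yields exactly $\varrho_{W,0}(t,s)$.

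The only real subtlety is recognizing that Doob's standard stochastic-continuity argument is blocked by the (possibly dense) jumps of $W$; once this is noticed, the time-change construction becomes the natural fix and the remaining measurability verifications are routine bookkeeping.
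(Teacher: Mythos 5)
Your construction is correct and follows essentially the same route as the paper: a jointly measurable time-changed Brownian motion $B_W(t)=\tilde B(W(t))$ combined with the bridge representation $B_{W,0}(t)=B_W(t)-\frac{W(t)}{W(1)}B_W(1)$, with joint measurability preserved by these operations. The only difference is that you verify the covariance identity by hand, whereas the paper invokes its earlier representation result (Proposition 18 of the cited work) for the same decomposition.
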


\begin{proof}
It is well known that a continuous version of standard Brownian motion on $[0, W^{-1}(1)]$ exists and is jointly measurable. Moreover, observe that $B_W(t):=B(W(t))$ is a $W$-Brownian motion and, since $W$ is right-continuous, $B_W$ is jointly measurable. By \cite[Proposition 18]{keale}, the $W$-Brownian bridge admits the representation
\begin{equation*}
B_{W,0}(t) = B_W(t) - \frac{W(t)}{W(1)} B_W(1).
\end{equation*}
Since $B_W$ is jointly measurable and the operations involved preserve measurability, this also establishes the joint measurability of $B_{W,0}$.
\end{proof}

By the joint measurability of $B_{W,0}(\cdot)$, we can apply Fubini's theorem to obtain
\begin{equation*}
\mathbb{E}[ \| B_{W,0}\|_{L^2_V(\mathbb{T})}^2] = \int_{\mathbb{T}} \mathbb{E}[B_{W,0}(t)^2]\, dV(t) \leq V(1) W(1) < \infty.
\end{equation*}

In particular, this shows that $B_{W,0}(\cdot) \in L^2(\Omega, L^2_V(\mathbb{T}))$. Now, let us consider the covariance operator of $B_{W,0}(\cdot)$ on the space $L^2_V(\mathbb{T})$. More precisely, let $K:L^2_V(\mathbb{T}) \to L^2_V(\mathbb{T})$ be the operator defined by
\begin{equation*}
Kf(t) = \int_{\mathbb{T}} \varrho_{W,0}(t,s) f(s) dV(s), \quad t \in \mathbb{T}.
\end{equation*}
We now have the following consequence of the above discussion.
\begin{proposition}
The operator $K$ is trace-class.
\end{proposition}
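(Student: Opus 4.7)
The plan is to identify $K$ as the covariance operator of $B_{W,0}$, regarded as an $L^2_V(\mathbb{T})$-valued random element, so that its trace can be computed directly as $\mathbb{E}[\|B_{W,0}\|_{L^2_V}^2]$, a quantity already known to be finite from the calculation preceding the statement.

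First, I would verify that $K$ is self-adjoint (immediate from the symmetry $\varrho_{W,0}(t,s)=\varrho_{W,0}(s,t)$) and positive. Positivity uses the joint measurability of $B_{W,0}$ given by Proposition \ref{prop:jointmeas}: for any $f\in L^2_V(\mathbb{T})$, Fubini yields
$$\langle Kf,f\rangle_V = \int_{\mathbb{T}}\int_{\mathbb{T}} \varrho_{W,0}(t,s)f(t)f(s)\,dV(t)\,dV(s) = \mathbb{E}\left[\left(\int_{\mathbb{T}} f(t)B_{W,0}(t)\,dV(t)\right)^{2}\right]\geq 0.$$
Since $L^2_V(\mathbb{T})$ is separable and $K$ is positive self-adjoint, its trace (possibly infinite) is $\mathrm{tr}(K)=\sum_n\langle Ke_n,e_n\rangle_V$ for any orthonormal basis $\{e_n\}$ of $L^2_V(\mathbb{T})$, and this sum is independent of the basis.

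Next, fix such a basis $\{e_n\}$ and set $Z_n:=\int_{\mathbb{T}} B_{W,0}(t)\,e_n(t)\,dV(t)$, which is a well-defined random variable by joint measurability together with the fact that $B_{W,0}(\omega,\cdot)\in L^2_V(\mathbb{T})$ for almost every $\omega$. Applying Fubini once more (justified by the finiteness of $\mathbb{E}[\|B_{W,0}\|_{L^2_V}^2]$),
$$\mathbb{E}[Z_n^2]=\int_{\mathbb{T}}\int_{\mathbb{T}} \varrho_{W,0}(t,s)\,e_n(t)\,e_n(s)\,dV(t)\,dV(s)=\langle Ke_n,e_n\rangle_V.$$

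Finally, Parseval's identity applied $\omega$-wise yields $\|B_{W,0}(\omega,\cdot)\|_{L^2_V}^2=\sum_n Z_n(\omega)^2$, and taking expectations while exchanging the sum with the integral via Tonelli (all summands being nonnegative) gives
$$\mathrm{tr}(K)=\sum_n\langle Ke_n,e_n\rangle_V=\sum_n\mathbb{E}[Z_n^2]=\mathbb{E}\bigl[\|B_{W,0}\|_{L^2_V}^2\bigr]\leq V(1)W(1)<\infty,$$
so $K$ is trace-class. The only nontrivial points are the two invocations of Fubini and the almost-sure $L^2_V$-integrability of sample paths, both of which rest on Proposition \ref{prop:jointmeas} and the moment bound obtained just before the statement; this is where I expect any remaining subtlety to lie.
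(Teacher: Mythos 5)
Your proof is correct, but it takes a genuinely different route from the paper. The paper's proof is essentially a one-liner: it cites a theorem of Rajput stating that a jointly measurable Gaussian process with sample paths in $L^2_V(\mathbb{T})$ induces a Gaussian measure on $L^2_V(\mathbb{T})$ whose covariance operator is $K$, and then appeals to the fact that covariance operators of Gaussian measures on Hilbert spaces are by definition trace-class, with $\operatorname{tr}(K)=\mathbb{E}[\|B_{W,0}\|^2_{L^2_V(\mathbb{T})}]$. You instead compute the trace directly: you establish that $K$ is positive and self-adjoint (so that finiteness of $\sum_n\langle Ke_n,e_n\rangle_V$ for one orthonormal basis suffices for trace-class), identify $\langle Ke_n,e_n\rangle_V=\mathbb{E}[Z_n^2]$ via Fubini (justified since the kernel is bounded and $dV$ is finite, together with the joint measurability from Proposition~\ref{prop:jointmeas}), and then use Parseval $\omega$-wise plus Tonelli to get $\operatorname{tr}(K)=\mathbb{E}[\|B_{W,0}\|^2_{L^2_V(\mathbb{T})}]\le V(1)W(1)$. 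Your argument is more elementary and in fact more general: it never uses Gaussianity of $B_{W,0}$, only that it is a jointly measurable second-order process with finite expected squared $L^2_V$-norm, and it avoids importing Gaussian measure theory on Hilbert spaces. What the paper's approach buys is brevity and the additional structural fact that the law of $B_{W,0}$ is a Gaussian measure on $L^2_V(\mathbb{T})$ with covariance operator exactly $K$, which is the identification later exploited when relating $K$ to $(-\Delta_{W,V,\mathcal{D}})^{-1}$.
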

\begin{proof}
By \cite[Theorem 2]{rajput}, since $B_{W,0}(\cdot)$ is a jointly measurable Gaussian process with paths in $L^2_V(\mathbb{T})$, the distribution of $B_{W,0}(\cdot)$ is a Gaussian measure on $L^2_V(\mathbb{T})$ with covariance operator $K$. In particular, by the definition of Gaussian measures, $K$ is trace-class. Furthermore,
\begin{equation*}
\operatorname{tr}(K) = \mathbb{E}[\|B_{W,0}\|_{L^2_V(\mathbb{T})}^2].
\end{equation*}
\end{proof}

Now, let $\Delta_{W,V,\mathcal{D}}$ be the Dirichlet-type Laplacian operator defined by $\Delta_{W,V,\mathcal{D}}: D(\Delta_{W,V}) \cap H_{W,V,\mathcal{D}}(\mathbb{T}) \subset L^2_V(\mathbb{T}) \to L^2_V(\mathbb{T})$ and given by $\Delta_{W,V,\mathcal{D}}f =\Delta_{W,V}f$. Thus, $\Delta_{W,V,\mathcal{D}}$ is a self-adjoint operator.

We will now show that $K = (-\Delta_{W,V,\mathcal{D}})^{-1}$. To this end, observe that an elementary computation shows that for any $f\in L_V^2(\mathbb{T})$, 
\begin{equation}\label{eq:identity_K_operator}
    \begin{aligned}
    \int_{\mathbb{T}} W(t\land s) f(s) dV(s) &= W(t) \int_{[0,1)} f(s) dV(s)\\
    &- \int_{(0,t]} \int_{[0,u)} f(s) dV(s) dW(u)
    \end{aligned}
\end{equation}
and
\begin{equation}\label{eq:identity_K_operator_2}
    \begin{aligned}
    \int_{\mathbb{T}} \frac{W(t)W(s)}{W(1)} f(s) dV(s) &= \frac{W(t)}{W(1)} \int_{[0,1)} W(s) f(s) dV(s).
    \end{aligned}
\end{equation}
Therefore, by \eqref{eq:identity_K_operator} and \eqref{eq:identity_K_operator_2}, we have that for any $f\in L_V^2(\mathbb{T})$,
\begin{align*}
    D_V^+ D_W^- K f(t) &= D_V^+ D_W^- \Bigg[ W(t) \int_{[0,1)} f(s) dV(s) \\
    &- \int_{(0,t]} \int_{[0,u)} f(s) dV(s) dW(u) - \frac{W(t)}{W(1)} \int_{[0,1)} W(s) f(s) dV(s) \Bigg] \\
    &= - f(t),
\end{align*}
which shows that for every $f \in L_V^2(\mathbb{T})$, we have 
\begin{equation*}
\Delta_{W,V} K f = f.
\end{equation*}
Conversely, take any $f \in D(\Delta_{W,V,\mathcal{D}})$. Observe that $f(0) = f(1) = 0$ as well as $\varrho_W(t,0) = 0$ for all $t \in [0,1)$ and $\varrho_W(t,1) = 0$ for all $t \in [0,1)$. Using the integration by parts formula (Theorem \ref{thm:intbyparts}), we obtain
\begin{align*}
K D_V^+ D_W^- f(t) &= \int_{[0,1)} \varrho_{W,0}(t,s) D_V^+ D_W^- f(s) dV(s) \\
&= -\int_{[0,1)} D_W^- \varrho_{W,0}(t,s) D_W^-f(s) dW(s)\\
&= -\int_{[0,1)} D_W^- \left(W(t\land s) - \frac{W(t)W(s)}{W(1)}\right) D_W^-f(s) dW(s) \\
&= - \int_{[0,1)}  1_{[0,t)} (s) D_W^- f(s) dW(s) + \frac{W(t)}{W(1)} \int_{[0,1)} D_W^- f(s) dW(s) \\
&= - f(t) + f(0) + \frac{W(t)}{W(1)} (f(1) - f(0))\\
&= -f(t).
\end{align*}
This shows that for every $f \in D(\Delta_{W,V,\mathcal{D}})$, we have $K D_V^+ D_W^- f = f$. Therefore, $K = (-\Delta_{W,V,\mathcal{D}})^{-1}$.

In particular, this shows that the inverse of the Dirichlet-type Laplacian operator $-\Delta_{W,V,\mathcal{D}}$ is a compact, self-adjoint, trace-class operator. Let 
\begin{equation*}
0 < \lambda_{1,\mathcal{D}} \leq \lambda_{2,\mathcal{D}} \leq \cdots \leq \lambda_{n,\mathcal{D}} \leq \cdots
\end{equation*}
be the eigenvalues of $-\Delta_{W,V,\mathcal{D}}$. We have
\begin{equation}\label{eq:traceability_condition}
\sum_{n=1}^{\infty} \frac{1}{\lambda_{n,\mathcal{D}}} = \operatorname{tr}(K) < \infty.
\end{equation}

Now, since both $-\Delta_{W,V,\mathcal{D}}$ and $-\Delta_{W,V}$ are self-adjoint operators with compact resolvents, they have empty essential spectra. Furthermore, $D(\Delta_{W,V,\mathcal{D}}) \subset D(\Delta_{W,V})$.
Therefore, by the min-max principle \cite{eschwe2004}, since $H_{W,V,\mathcal{D}}(\mathbb{T}) \subset H_{W,V}(\mathbb{T})$, we have
\begin{align*}
\lambda_{k} &= \min_{\substack{L\subset D(-\Delta_{W,V})\\ \dim(L) = k}} \max_{\substack{f\in L\\ f\neq 0}} \frac{\<f, -\Delta_{W,V} f\>_V}{\<f,f\>_V}\\
&\leq  \min_{\substack{L\subset D(-\Delta_{W,V,\mathcal{D}})\\ \dim(L) = k}} \max_{\substack{f\in L\\ f\neq 0}} \frac{\<f, -\Delta_{W,V,\mathcal{D}} f\>_V}{\<f,f\>_V} = \lambda_{k,\mathcal{D}}.
\end{align*}
In particular, for $k>1$, since $\lambda_1 = 0$ and has multiplicity one, we have
\begin{equation}\label{eq:traceability_condition_2}
\frac{1}{\lambda_{k,\mathcal{D}}} \leq \frac{1}{\lambda_{k}}.
\end{equation}

This shows that the traceability condition obtained in \eqref{eq:traceability_condition} cannot be directly transferred to the Laplacian operator $-\Delta_{W,V}$. In the next section, we will prove in a completely different way that the traceability condition is satisfied for certain fractional powers of the Laplacian operator $-\Delta_{W,V}$. By the relation in \eqref{eq:traceability_condition_2}, we will also obtain, as a special case, a sharper bound for the eigenvalues of $-\Delta_{W,V,\mathcal{D}}$.

\section{Asymptotic behavior of the eigenvalues of $\Delta_{W,V}$}\label{sec:asymptotic_eigenvalues}
	
In this section, we bring together and generalize several results from the previous section to obtain an asymptotic result for the eigenvalues of $\Delta_{W,V}$. First, we highlight the possibility of generalizing the well-known identity $\sin^2(x) + \cos^2(x) = 1$ to the generalized trigonometric functions introduced in Section \ref{sect42}. Next, we consider a generalization of Lemma \ref{taylorbound}, which is important for providing a sharp upper bound (in certain contexts) for the asymptotics of the eigenvalues. This approach leads to a simple abstract result that, to our knowledge, has not yet appeared in the literature.

The following result, inspired by \cite[Theorem 5.3]{trig}, plays an important role in this work.

\begin{proposition}\label{prop11}
	For any $x\in\mathbb{T}$ and any $\alpha\neq 0$, the following fundamental relation holds:
	\begin{equation}\label{relfun}
		C_{W,V}(\alpha,x)C_{V,W}(\alpha,x)+S_{W,V}(\alpha,x)S_{V,W}(\alpha,x)=1
	\end{equation}
\end{proposition}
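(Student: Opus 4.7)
My plan is to define $H(x) := C_{W,V}(\alpha,x)C_{V,W}(\alpha,x) + S_{W,V}(\alpha,x)S_{V,W}(\alpha,x)$ and verify the identity via (i) $H(0) = 1$ and (ii) $H(x) - H(0) = 0$ for every $x \in \mathbb{T}$. Step (i) is immediate from the defining power series: since $F_k(0,0) = G_k(0,0) = 0$ for $k \geq 1$, we obtain $C_{W,V}(\alpha,0) = C_{V,W}(\alpha,0) = 1$ and $S_{W,V}(\alpha,0) = S_{V,W}(\alpha,0) = 0$, giving $H(0) = 1$. The substantive content lies in step (ii), which I plan to extract from the integration-by-parts formula of Theorem \ref{thm:intbyparts} applied to each of the two products in $H$.

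Before invoking Theorem \ref{thm:intbyparts} I must verify that the four generalized trigonometric functions have the correct one-sided regularity. Using the iterated-integral representation from Corollary \ref{rmk111} (and its $V$-$W$-swapped counterpart in Remark \ref{rmk2impor}), a straightforward induction shows that $x \mapsto F_n(x,x)$ is c\`adl\`ag and $x \mapsto G_n(x,x)$ is c\`agl\`ad, the former built out of outer integrals $\int_{(0,\cdot]}dW$ and the latter out of $\int_{[0,\cdot)}dV$. Combined with the uniform convergence on compacts afforded by Lemma \ref{taylorbound}, this implies that $C_{W,V}(\alpha,\cdot), S_{W,V}(\alpha,\cdot)$ are c\`adl\`ag and $C_{V,W}(\alpha,\cdot), S_{V,W}(\alpha,\cdot)$ are c\`agl\`ad. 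The derivative identities \eqref{difcossinWV}--\eqref{difsincosVW} then supply c\`agl\`ad $W$-left derivatives for the first pair and c\`adl\`ag $V$-right derivatives for the second, matching exactly the regularity hypotheses of Theorem \ref{thm:intbyparts}.

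Writing $u, v, \tilde u, \tilde v$ for the four functions in the natural way, I apply Theorem \ref{thm:intbyparts} on $[0,x)$ to the pair $(g,f) = (u, \tilde u)$; substituting $D_V^+ \tilde u = -\alpha v$ and $D_W^- u = -\alpha \tilde v$ and rearranging gives
$$u(x)\tilde u(x) - 1 = -\alpha \int_{[0,x)} u\,v\, dV - \alpha \int_{(0,x]} \tilde u\, \tilde v\, dW.$$
Applying the same procedure to $(g,f) = (v, \tilde v)$, now with $D_V^+ \tilde v = \alpha u$ and $D_W^- v = \alpha \tilde u$, produces the companion identity
$$v(x)\tilde v(x) = +\alpha \int_{[0,x)} v\,u\, dV + \alpha \int_{(0,x]} \tilde v\, \tilde u\, dW.$$
Summing the two relations, the right-hand integrals cancel exactly and we are left with $H(x) = 1$. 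The main obstacle I foresee is the regularity bookkeeping, that is, tracking which construction yields c\`adl\`ag versus c\`agl\`ad objects so that Theorem \ref{thm:intbyparts} is applied with the correct factor in each slot; once that is in place, the conclusion follows by a clean telescoping argument that mirrors the classical derivation of $\sin^2 + \cos^2 = 1$ by differentiation.
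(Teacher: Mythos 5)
Your proof is correct, but it follows a genuinely different route from the paper. The paper works at the level of power-series coefficients: it sets $\alpha_{2k}(x)=(G_{2k}(x,x),F_{2k}(x,x))$, $\alpha_{2k+1}(x)=(F_{2k+1}(x,x),G_{2k+1}(x,x))$, uses the integration-by-parts formula to prove the Cauchy-product cancellation $\sum_{j=0}^{2k}(-1)^{j}q_{j}(x)p_{2n-j}(x)=0$ for each $n\geq 1$, and then substitutes this into the product of the four series so that every coefficient beyond the constant term vanishes. You instead work with the assembled analytic functions: you verify the c\`adl\`ag/c\`agl\`ad regularity of $C_{W,V},S_{W,V}$ and $C_{V,W},S_{V,W}$, invoke the derivative identities \eqref{difcossinWV}--\eqref{difsincosVW} (already established in Section \ref{sect42}), apply Theorem \ref{thm:intbyparts} once to each of the two products, and sum so that the $dV$- and $dW$-integrals cancel, i.e.\ the generalized analogue of showing $(\sin^2+\cos^2)'=0$. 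Both arguments ultimately rest on the same integration-by-parts machinery and on the uniform convergence from Lemma \ref{taylorbound}; yours buys a shorter, more classical derivation that exhibits the identity as a conservation law for the system \eqref{sineqpart}--\eqref{coseqpart}, at the cost of the regularity bookkeeping needed to place the four functions in the correct slots of Theorem \ref{thm:intbyparts} (which you handle correctly via Corollary \ref{rmk111} and Remark \ref{rmk2impor}), while the paper's coefficient-level argument avoids any pointwise differentiability of the summed series and produces the combinatorial relation \eqref{relarela}, which has independent interest.
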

\begin{proof}
	Let $(\alpha_n)_{n\in\mathbb{N}}$ be given by 
	$$\alpha_{2k}(x)=(G_{2k}(x,x),F_{2k}(x,x))$$
	and 
	$$\alpha_{2k+1}(x)=(F_{2k+1}(x,x),G_{2k+1}(x,x))$$ 
	for $k\geq 0$, where $F_n$ is given by \eqref{term:1} and $G_n$ by \eqref{Gnformula}. Let us denote $\alpha_{n}(x)=(p_{n}(x),q_{n}(x))$. Using the integration by parts formula (see Proposition \ref{intbypartsprop2}), we obtain the following relation:
	\begin{equation}\label{relarela}
		\sum_{j=0}^{2k}(-1)^{j}q_{j}(x)p_{2n-j}(x)=0.
	\end{equation}
	Finally, applying (\ref{relarela}) to 
	$$C_{W,V}(\alpha,x)C_{V,W}(\alpha,x)+S_{W,V}(\alpha,x)S_{V,W}(\alpha,x)=1+\sum_{n=1}^{\infty}(-1)^{n}\alpha^{2n}\sum_{k=0}^{2n}(-1)^{k}q_{k}(x)p_{2n-k}(x),$$
	the relation (\ref{relfun}) follows.
\end{proof}

We now obtain a log-asymptotically sharp estimate related to $F_{2n}$ and $G_{2n}$, which will be fundamental in our study of the asymptotic behavior of the eigenvalues of $\Delta_{W,V}$.

\begin{proposition}\label{prop22}
	There exists a constant $C>0$ such that 
	\begin{equation}\label{kjrsas}
		|F_{2n}(1,1)+G_{2n}(1,1)|\le  \dfrac{C^{n}}{(n!)^{2}}.
	\end{equation}
\end{proposition}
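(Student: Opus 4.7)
The plan is to represent $F_{2n}(x,x)$ and $G_{2n}(x,x)$ as iterated integrals over simplicial regions in $\mathbb{R}^{2n}$ in which the measures $dV$ and $dW$ alternate, and then to bound each by relaxing the interleaving constraints so that the $dV$-integrations and the $dW$-integrations decouple into two independent strict simplices. The sharp $(n!)^2$-type bound will then follow from the product of two $n!$-type estimates, one with respect to $dV$ and one with respect to $dW$.

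The first step is to iterate Corollary~\ref{rmk111} and to unfold each remaining factor $V(\cdot) = \int_{[0,\cdot)} dV$ and $W(\cdot) = \int_{(0,\cdot]} dW$, yielding
\begin{equation*}
F_{2n}(x,x) = \int_{\Sigma_n(x)} dV(u_1)\, dW(t_1) \cdots dV(u_n)\, dW(t_n),
\end{equation*}
where $\Sigma_n(x) = \{0 \le u_1 < t_1 \le u_2 < t_2 \le \cdots \le u_n < t_n \le x\}$. The key observation is that the strict inequalities $u_i < t_i$, combined with $t_i \le u_{i+1}$, force $u_1 < u_2 < \cdots < u_n$ and $t_1 < t_2 < \cdots < t_n$ strictly. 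Hence $\Sigma_n(x) \subseteq A_n(x) \times B_n(x)$, with $A_n(x) = \{0 \le u_1 < \cdots < u_n \le x\}$ and $B_n(x) = \{0 < t_1 < \cdots < t_n \le x\}$, so Tonelli's theorem gives
\begin{equation*}
F_{2n}(x,x) \le \int_{A_n(x)} dV^{\otimes n} \cdot \int_{B_n(x)} dW^{\otimes n}.
\end{equation*}
Each factor is bounded using the symmetry of the product measure: $[0,x]^n$ splits into the $n!$ pairwise disjoint strict-ordering simplices $\{u_{\sigma(1)} < \cdots < u_{\sigma(n)}\}_{\sigma \in S_n}$ (all of equal $dV^{\otimes n}$-mass) together with a diagonal set of nonnegative mass, so $n! \int_{A_n(x)} dV^{\otimes n} \le V(x)^n$, and similarly for $W$. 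This yields $F_{2n}(x,x) \le (V(x) W(x))^n/(n!)^2$. An entirely analogous argument, with $V$ and $W$ interchanged and using the recursion for $G_{2n}$ indicated in Remark~\ref{rmk2impor}, gives the same bound for $G_{2n}(x,x)$. Evaluating at $x=1$ and summing proves \eqref{kjrsas} with $C = 2 V(1) W(1)$.

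The delicate part is the bookkeeping of the half-open intervals $[0,s)$ versus $(0,s]$ in the definition \eqref{term:1} of $F_n$: it is precisely the strictness of $u_i < t_i$ inherited from these intervals that forces the strict ordering among the $u_i$ (and, symmetrically, among the $t_i$), which in turn makes the symmetry bound $V(x)^n/n!$ work in the presence of atoms of $dV$ and $dW$. Without this strictness, atoms would contribute a nonnegligible mass to the diagonal in the symmetry decomposition and would destroy the single-$n!$ improvement over Lemma~\ref{taylorbound}.
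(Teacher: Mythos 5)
Your proof is correct, but it follows a genuinely different route from the paper's. Both arguments start from the representation of $F_{2n}(x,x)$ (and, via Remark~\ref{rmk2impor}, of $G_{2n}(x,x)$) as $n$ alternating iterated integrals as in Corollary~\ref{rmk111}, but the paper then proceeds one layer at a time: it pulls out powers of $V$ and $W$ by monotonicity and controls them with the Stieltjes power-rule inequalities $\frac{[W(x)]^{n+1}}{n+1}\ge\int_{(0,x]}W^{n}(\xi-)\,dW(\xi)$ and $\frac{[V(x)]^{n+1}}{n+1}\ge\int_{[0,x)}V^{n}(\xi)\,dV(\xi)$, which rest on the notion of $W$- (resp.\ $V$-) absolute continuity from \cite{pouso}, in the same spirit as Lemma~\ref{taylorbound}. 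You instead unfold the whole iterated integral over the interleaved region $\Sigma_n(x)=\{0\le u_1<t_1\le u_2<\cdots\le u_n<t_n\le x\}$, note that the mixed strict/weak inequalities inherited from the half-open intervals in \eqref{term:1} force strict orderings among the $u_i$ and among the $t_i$ separately, relax the interleaving to a product of two strict simplices, and bound each by exchangeability of the product measures, giving $V(x)^n/n!$ and $W(x)^n/n!$; swapping the roles of $V$ and $W$ handles $G_{2n}$. Your route is more elementary and self-contained (no external input from \cite{pouso}), yields an explicit constant, and makes transparent exactly where strictness rescues the single-$n!$ gain when $dV$ or $dW$ has atoms; the paper's route buys consistency with the machinery already used for the remainder estimates. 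Two harmless details: since $\Sigma_n(x)$ actually forces $u_n<t_n\le x$, you may (and, if $dV$ had an atom at $x$, should) take $u_n<x$ in $A_n(x)$ so that the relevant mass is exactly $V(x)$ rather than $V(x+)$ --- at $x=1$ this is immaterial because periodicity and continuity at $0$ give $V(1+)=V(1)$; and your constant $C=2V(1)W(1)$ works for $n\ge 1$, which is all the proposition requires.
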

\begin{proof}
	We begin by observing that the functions $\frac{W^{n+1}}{n+1}$ and $\frac{V^{n+1}}{n+1}$ are, respectively, $W$-absolutely continuous and $V$-absolutely continuous in the sense of \cite{pouso}. Using the derivatives as defined in \cite{pouso}, it follows that
	$$\left(\frac{W^{n+1}}{n+1}\right)'_{W}(x)=\frac{1}{n+1}\sum_{j=0}^{n}W(x-)^{j}W(x)^{n-j}$$
	and 
	$$\left(\frac{V^{n+1}}{n+1}\right)'_{V}(x)=\frac{1}{n+1}\sum_{j=0}^{n}V(x+)^{j}V(x)^{n-j}.$$ 
	Using $W(x)\geq W(x-)$ and $V(x+)\geq V(x)$ together with \cite[Theorem 5.4]{pouso}, we obtain the following inequalities:
	\begin{equation}\label{estimaW}
		\frac{[W(x)]^{n+1}}{n+1}\geq\int_{(0,x]}W^{n}(\xi-)dW(\xi)
	\end{equation}
	and 
	\begin{equation}\label{estimaV}
		\frac{[V(x)]^{n+1}}{n+1}\geq\int_{[0,x)}V^{n}(\xi)dV(\xi).
	\end{equation}
	Moreover, notice that
	\begin{equation*}
		\begin{split}
			F_{2}(x,x)=\int_{(0,x]}\int_{[0,s)} dV(\xi)dW(s)=\int_{(0,x]}V(s)dW(s)\le V(x)W(x).
		\end{split}
	\end{equation*}
	So, 
	\begin{equation*}
		\begin{split}
			F_{4}(x,x) &=\int_{(0,x]}\int_{[0,s)} F_{2}(\xi,\xi)dV(\xi)dW(s) \le \int_{(0,x]}\int_{[0,s)} V(\xi)W(\xi) dV(\xi)dW(s)\\
			&\le \int_{(0,x]}W(s-) \left(\int_{[0,s)} V(\xi)dV(\xi)\right)dW(s)\le \int_{(0,x]}W(s-)\frac{V(s)^{2}}{2}dW(s)\\
			&\le \frac{V(x)^{2}}{2}\int_{(0,x]}W(s-)dW(s) \le \frac{V(x)^{2}}{2}\frac{W(x)^{2}}{2}
		\end{split}
	\end{equation*}
	Therefore, after applying the same idea $n$ times using the inequalities (\ref{estimaW}) and (\ref{estimaV}) to $F_{2n}(x,x)$, it follows that there exists a constant $A>0$ such that $|F_{2n}(1,1)|\le \frac{A^{n}}{(n!)^2}$. Proceeding similarly for $G_{2n}$ (using the corresponding expression for $G_{k}$ as in Corollary \ref{rmk111}) and successively applying (\ref{estimaW}) and (\ref{estimaV}) to $G_{2n}(x,x)$, we obtain the existence of $B>0$ such that $|G_{2n}(1,1)|\le \frac{B^{n}}{(n!)^2}$. Therefore, inequality (\ref{kjrsas}) follows if we take $C=2(A+B)$.
\end{proof}

We are now in a position to state the main result of this section, which provides a lower bound on the growth of the eigenvalues of $\Delta_{W,V}$.

\begin{theorem}\label{prop6}
	Let $W,V:\mathbb{R}\to\mathbb{R}$ be any strictly increasing functions satisfying the periodic condition \eqref{eqsmedida}, and let $\Delta_{W,V}:\mathcal{D}_{W,V}(\mathbb{T})\subset L^2_V(\mathbb{T})\to L^2_V(\mathbb{T})$ be their induced $W$-$V$-Laplacian. Let $\{\lambda_{i}\}_{i\geq1}$ be the sequence of non-negative eigenvalues (counted according to their multiplicity) of $\Delta_{W,V}$. Then, there exists $\rho\in (0,\frac{1}{2}]$ such that 
	\begin{equation}\label{eigenineqeq}
		Cn^{1/\rho}\le \lambda_{n}
	\end{equation}
	for some constant $C>0$. Moreover, 
	\begin{equation}\label{convsumeigen}
		\sum_{i=1}^{\infty}\dfrac{1}{\lambda_{i}^s}<\infty
	\end{equation}
	for $s>\rho$. Furthermore, the bound in equation \eqref{eigenineqeq} is sharp in the sense that it is attained for some $W$ and $V$.
\end{theorem}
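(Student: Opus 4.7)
The plan is to realize the positive eigenvalues of $-\Delta_{W,V}$ as zeros of an explicit entire function of order $1/2$ and then extract the asymptotic bound and the summability from classical Jensen-type estimates. The function in question is the one already singled out in the introduction,
$$f(z) := \sum_{n\geq 1}(-1)^n z^n \big[F_{2n}(1,1)+G_{2n}(1,1)\big].$$
My first step is to derive the identity $f(\lambda)=0$ for every positive eigenvalue $\lambda$. By Proposition \ref{caraceige}, $\lambda>0$ is an eigenvalue exactly when the $2\times 2$ linear system \eqref{bcondi} in $(a,b)$ admits a nontrivial solution, i.e.\ when its determinant vanishes. Multiplying that determinant by $-\lambda$ yields
$$[C_{W,V}(\sqrt{\lambda},1)-1]\,[C_{V,W}(\sqrt{\lambda},1)-1] + S_{W,V}(\sqrt{\lambda},1)\,S_{V,W}(\sqrt{\lambda},1)=0,$$
and invoking the fundamental identity \eqref{relfun} of Proposition \ref{prop11} at $x=1$ collapses this to $C_{W,V}(\sqrt{\lambda},1)+C_{V,W}(\sqrt{\lambda},1)=2$, which upon substituting the defining power series is exactly $f(\lambda)=0$.

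The second step controls the growth of $f$. By Proposition \ref{prop22} the Taylor coefficients satisfy $|c_n|\leq C^n/(n!)^2$, whence
$$|f(z)| \leq \sum_{n\geq 1}\frac{(C|z|)^n}{(n!)^2} \leq C' \exp\big(2\sqrt{C|z|}\big), \qquad z\in\mathbb{C},$$
either by the asymptotics of the modified Bessel function $I_0(2\sqrt{Cr})$ or by splitting the defining series at $n\sim\sqrt{C|z|}$. Thus $f$ is entire of order at most $1/2$ and of finite type. Since $c_1=-\big[F_2(1,1)+G_2(1,1)\big]=-V(1)W(1)\neq 0$, $f$ has a simple zero at the origin, so $g(z):=f(z)/z$ is entire with $g(0)\neq 0$ and inherits the same growth. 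Applying Jensen's formula to $g$, together with the elementary bound $\int_0^r n_g(t)\,t^{-1}\,dt \geq n_g(r/e)$, yields $n_g(r)\leq A\sqrt{r}+B$ for all $r>0$, where $n_g(r)$ counts the zeros of $g$ in $|z|\leq r$. Since the positive eigenvalues of $-\Delta_{W,V}$ are a subsequence of the nonzero zeros of $f$ in modulus-increasing order, we deduce $\lambda_n\geq c n^2$, which is \eqref{eigenineqeq} with $\rho=1/2$; the summability \eqref{convsumeigen} for $s>1/2$ then follows by comparison with $\sum n^{-2s}$. Sharpness is witnessed by $W(x)=V(x)=x$: the generalized trigonometric functions reduce to $\cos(\alpha x)$ and $\sin(\alpha x)$, the characterization from Step 1 gives the familiar eigenvalues $\lambda_n=(2\pi n)^2$, and $\rho=1/2$ is evidently optimal.

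The main technical delicacy lies in the bookkeeping between zeros of $f$ and eigenvalues of $-\Delta_{W,V}$. The characterization in Step 1 identifies the set of positive eigenvalues with the set of positive zeros of $f$, but matching multiplicities requires a separate remark: the geometric multiplicity of each $\lambda>0$ as an eigenvalue is at most $2$, since the eigenspace embeds into $\mathbb{R}^2$ via the constants $(a_i,b_i)$ of \eqref{s01}, so up to a bounded universal factor it matches the order of vanishing of $f$ at $\lambda$, and this does not affect the asymptotic rate $\lambda_n\gtrsim n^2$. Note also that no information is lost to possible complex or non-positive zeros of $f$: the one-sided inclusion of the positive eigenvalues inside the full zero set of $f$, combined with the modulus-increasing ordering, is precisely the direction needed for the lower bound, so the count $n_g(r)=O(\sqrt r)$ automatically propagates to the eigenvalue-counting function.
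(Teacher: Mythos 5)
Your argument is correct and shares the paper's backbone: identify the positive eigenvalues as zeros of the entire function $f(z)=\sum_{n\geq1}(-1)^nz^n[F_{2n}(1,1)+G_{2n}(1,1)]$ via Proposition \ref{caraceige} and the identity \eqref{relfun}, and then convert the coefficient bound of Proposition \ref{prop22} into a zero-counting estimate. Where you diverge is the final analytic step: the paper computes the order $\rho$ of $f$ from its Taylor coefficients ($\rho\le 1/2$ by Stirling), invokes the Jensen-type counting theorem and Levin's theorem identifying the order with the convergence exponent of the zero set, and thereby obtains $\lambda_n\gtrsim n^{1/\rho}$ and $\sum\lambda_i^{-s}<\infty$ for every $s>\rho$, where $\rho$ is the \emph{actual} order of $f$; you instead bound $|f(z)|\le C'\exp(2\sqrt{C|z|})$ directly, divide out the simple zero at the origin (your observation $c_1=-V(1)W(1)\neq0$ also guarantees $f\not\equiv0$, a point the paper leaves implicit), and apply Jensen's formula to get $n_g(r)=O(\sqrt r)$, hence $\lambda_n\gtrsim n^2$. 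This proves the theorem exactly as stated, since taking $\rho=1/2$ always realizes the required ``there exists $\rho\in(0,1/2]$,'' and your sharpness argument via the classical case is the same in substance as the paper's appeal to Weyl asymptotics. Your explicit determinant reduction and the multiplicity bookkeeping (geometric multiplicity at most $2$, so eigenvalue counting and zero counting differ by a bounded factor) are more careful than the paper's sketch, which simply asserts $\{\lambda_i\}\subset\mathcal Z$. The one thing your more elementary route gives up is the refinement that $\rho$ can be taken to be the order of $f$ (equivalently the convergence exponent of its zeros), which may be strictly smaller than $1/2$; this sharper $\rho$ is what Remark \ref{remark_afterthem} alludes to and what the later applications (Proposition \ref{HSinclusion}, Corollary \ref{whitenoisedual}, and the well-posedness threshold $2\beta>\rho$ for the Whittle--Mat\'ern equation) actually exploit. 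If you want the full strength of the paper's version, replace the last step by the order/convergence-exponent argument: your growth bound already shows the order is at most $1/2$, and Levin's theorem then transfers the counting estimate $n(r)=O(r^{\rho})$ with the true order $\rho$, from which \eqref{eigenineqeq} and \eqref{convsumeigen} follow exactly as in your write-up.
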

\begin{proof}
	From Proposition \ref{prop11}, the system (\ref{bcondi}) has a non-trivial solution $(a_{i},b_{i})$ if $\lambda_{i}>0$ satisfies
	$$2=C_{W,V}(\sqrt{\lambda_{i}},1)+C_{V,W}(\sqrt{\lambda_{i}},1).$$
	That is, $\lambda_{i}$
	are positive roots of the entire function
	$$f(z)=-2+\sum_{n\geq 0}(-1)^{n}z^{n}(F_{2n}(1,1)+G_{2n}(1,1))=\sum_{n\geq 1}(-1)^{n}z^{n}(F_{2n}(1,1)+G_{2n}(1,1)).$$
	Therefore, the eigenvalues of $\Delta_{W,V}$ are zeros of $f(z)$.
	The inequality $$|F_{2n}(1,1)+G_{2n}(1,1)|\le\dfrac{C^{n}}{(n!)^2}$$
	implies that the order of growth $\rho$ of the series $f(z)$ satisfies $0\le\rho\le \dfrac{1}{2}$. Indeed, using the Stirling's approximation, we have 
	$$\rho=\limsup_{n\to\infty}\dfrac{n\ln n}{-\ln \left|(-1)^{n}[F_{2n}(1,1)+G_{2n}(1,1)]\right|}\le \limsup_{n\to\infty}\dfrac{n\ln n}{-\ln\left[\dfrac{C^{n}}{(n!)^2}\right]}=\dfrac{1}{2}.$$ 
	Now, let $\mathcal{Z}=\{z_{i}\}_{i\in\mathbb{N}}$ be the zeros of $f$, indexed according to their multiplicity and ordered by their moduli:
	$$0<|z_{1}|\le |z_{2}| \le \ldots .$$
	Let $n(r)$ be the number of zeros of $f$ whose moduli are less than or equal to $r$, that is,
	$$n(r)=\#\{i\in\mathbb{N}; |z_{i}|<r\}.$$
	Then, by \cite[Chapter 5, Theorem 2.1]{stein}, for any $\beta\ge\rho$ there exists a constant $C>0$ such that for sufficiently large $r>0$,  
	$$n(r)\le Cr^{\beta}.$$
	Now, observe that $\{\lambda_{i}\}_{i\in\mathbb{N}}\subset \mathcal{Z}$, and by \eqref{asymplambda}, we have $\lambda_{i}\to \infty$ as $i\to\infty$. Therefore, $\lim_{i\to\infty}|z_{i}|=+\infty$, which implies that $\rho\neq 0$. The above results also show, in particular, that $\rho\not\in\mathbb{Z}$.
	By \cite[Chapter 5, Theorem 1]{levin}, the order $\rho$ is equal to the convergence exponent of $\mathcal{Z}$. In particular, there exists $C>0$ such that
	\begin{equation}\label{estim11}
		\limsup_{r\to+\infty}\dfrac{n(r)}{r^{\rho}}=\limsup_{n\to+\infty}\dfrac{n}{|z_{n}|^{\rho}}\le C.
	\end{equation}
	From (\ref{estim11}), there exists a subsequence $n_{k}$ such that
	\begin{equation}\label{lowerboundeq}
		\dfrac{n_{k}}{C}\le \lambda_{k}^{\rho}.
	\end{equation}
	By the definition of subsequence, we have $n_{k}\geq k$. Combining \eqref{lowerboundeq} and $n_k\geq k$, we obtain the inequality (\ref{eigenineqeq}). Finally, since $\rho$ is equal to the convergence exponent of $\mathcal{Z}$, we directly obtain that if $s>\rho$, then
	\begin{equation}\label{convseries}
		\sum_{i\geq 1}\dfrac{1}{\lambda_{i}^{s}}<\infty.
	\end{equation}
	The final claim follows from Weyl's asymptotics (see, for instance, \cite[Theorem 6.3.1]{davies}). Indeed, Weyl's asymptotics yields that the exact order for the Laplacian, which corresponds to the case $W(x) = V(x) = x$, is $\lambda_n \propto n^{2}$.
\end{proof}

In Section \ref{sec:applications}, we will see that Theorem \ref{prop6} can be used to establish the traceability of negative fractional powers of the generalized Laplacian.
    
\begin{remark}\label{remark_afterthem}
	In a similar spirit, one may consider a generalized formulation of the problem over compact intervals of $\mathbb{R}$ for measures that do not necessarily satisfy the periodicity condition \eqref{eqsmedida}. This is precisely the setting studied in \cite{trig}. Within the same framework, \cite{Freiberg_2005} observed that there are specific situations in which the parameter $\rho$ appearing in Theorem \ref{prop6} is strictly less than $1/2$. We point out that, unlike the case studied in \cite{Freiberg_2005}, the result of Theorem \ref{prop6} does not specify the exact value of $\rho$.
\end{remark}

\begin{remark}
	We also note that the result stated in Theorem \ref{prop6} extends to the case where the supports of the measures $dW$ and $dV$ are not the whole torus (or the entire interval $[0,1]$), as in the problems studied in \cite{Freiberg_2005} or \cite{trig}. The key idea is that in these cases, the eigenvalues are also characterized as roots of entire functions. 
\end{remark}

An immediate consequence of Theorem \ref{prop6} is its counterpart for the eigenvalues of the Dirichlet Laplacian $\Delta_{W,V,\mathcal{D}}$ introduced in Section \ref{sec:trace}:
\begin{corollary}
	Let $\{\lambda_{i,\mathcal{D}}\}_{i\geq1}$ be the sequence of non-negative eigenvalues (counted according to their multiplicity) of $\Delta_{W,V,\mathcal{D}}$. Then, there exists $\rho\in (0,\frac{1}{2}]$ such that
	\begin{equation*}
		Cn^{1/\rho}\le \lambda_{n, \mathcal{D}}
	\end{equation*}
	for some constant $C>0$. Moreover,
	\begin{equation*}
		\sum_{i=1}^{\infty}\dfrac{1}{\lambda_{i,\mathcal{D}}^s}<\infty
	\end{equation*}
	for $s>\rho$. 
\end{corollary}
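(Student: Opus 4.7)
The plan is to derive this corollary as a direct consequence of Theorem \ref{prop6} together with the eigenvalue comparison already established in Section \ref{sec:trace}. The crucial observation is that the relation $\lambda_k \leq \lambda_{k,\mathcal{D}}$ for $k \geq 1$ (noting that $\lambda_0 = 0$ is the only eigenvalue not shared, since $-\Delta_{W,V,\mathcal{D}}$ is strictly positive) was already obtained by the Courant min-max principle in the discussion leading to \eqref{eq:traceability_condition_2}. Indeed, since $D(-\Delta_{W,V,\mathcal{D}}) \subset D(-\Delta_{W,V})$ and $H_{W,V,\mathcal{D}}(\mathbb{T}) \subset H_{W,V}(\mathbb{T})$, minimizing over the smaller family of $k$-dimensional subspaces produces a larger value.

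First, I would invoke Theorem \ref{prop6} to fix $\rho \in (0, \tfrac{1}{2}]$ and a constant $C > 0$ such that $C n^{1/\rho} \leq \lambda_n$ for all sufficiently large $n$. Combining this with $\lambda_n \leq \lambda_{n,\mathcal{D}}$ immediately yields
\begin{equation*}
C n^{1/\rho} \leq \lambda_n \leq \lambda_{n,\mathcal{D}},
\end{equation*}
which is the desired lower bound for the Dirichlet eigenvalues, possibly after adjusting the constant $C$ so that the inequality holds for all $n \geq 1$.

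For the summability claim, fix $s > \rho$. Since $\lambda_{n,\mathcal{D}} \geq \lambda_n > 0$ for $n \geq 1$, the pointwise inequality $\lambda_{n,\mathcal{D}}^{-s} \leq \lambda_n^{-s}$ holds, and comparison with the convergent series $\sum_{n \geq 1} \lambda_n^{-s}$ guaranteed by Theorem \ref{prop6} gives $\sum_{n \geq 1} \lambda_{n,\mathcal{D}}^{-s} < \infty$. No new obstacle arises here; the entire argument is a transfer principle from the $\Delta_{W,V}$ spectrum to the Dirichlet spectrum via the monotonicity of eigenvalues under restriction of the form domain. If anything, the only subtlety is bookkeeping at the bottom of the spectrum, where one must avoid comparing with $\lambda_0 = 0$, but this is already handled by \eqref{eq:traceability_condition_2}.
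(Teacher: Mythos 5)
Your argument is correct and is precisely the route the paper intends: the corollary is stated as an immediate consequence of Theorem \ref{prop6} combined with the min–max comparison $\lambda_k \le \lambda_{k,\mathcal{D}}$ already established in Section \ref{sec:trace}, with the summability transferred termwise. The only bookkeeping issue, the index shift coming from the zero eigenvalue of $-\Delta_{W,V}$, is harmless since it only affects the constant $C$, as you note.
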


To summarize, the strategy developed in this section can be used to study the abstract asymptotics of eigenvalues of compact operators.

\begin{theorem}\label{theorem:7colofmain}
	Let $A:H\to H$ be an injective compact operator defined on a Banach space $H$ whose set of eigenvalues $\mathcal{Z}:=\{\lambda_{i}\}_{i\in\mathbb{N}}$ converges to $0$. If $\mathcal{Z}$ is a subset of the zeros of an entire function $f(z)=\sum_{n=0}^{\infty} a_{n}z^{n}$ such that $$\rho:=-\operatorname{limsup}_{n \rightarrow+\infty} \frac{n \ln n}{\ln \left|a_n\right|}<\infty,$$
	then, if $n(r):=\#\{i\in\mathbb{N};|\lambda_{i}|<r\}$, the following assertions hold: 
	\begin{enumerate}
		\item There exists a constant $C>0$ such that $n(r)\le Cr^{\rho}$;
		\item For $s>\rho$, we have $\sum_{i}\frac{1}{|\lambda_{i}|^{s}}<\infty$.
	\end{enumerate} 
\end{theorem}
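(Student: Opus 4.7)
The plan is to derive both assertions as direct consequences of the classical theory of entire functions of finite order, essentially abstracting the argument used in the proof of Theorem \ref{prop6}. The starting observation is that the hypothesis
$$\rho = -\operatorname{limsup}_{n\to\infty} \frac{n\ln n}{\ln|a_n|}$$
is, via Stirling's approximation, precisely Hadamard's formula for the order of growth of the entire function $f$, in the sense that
$$\rho = \operatorname{limsup}_{r\to\infty} \frac{\log\log M(r)}{\log r}, \qquad M(r) := \max_{|z|=r}|f(z)|.$$
Since $\rho < \infty$ by assumption, $f$ is of finite order $\rho$. That this reformulation gives the usual notion of order is standard (see, e.g., Levin, Chapter 1).

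For assertion (1), I would invoke the standard Jensen-type counting bound for entire functions of finite order (Stein–Shakarchi, Complex Analysis, Ch.~5, Thm~2.1): denoting by $\widetilde{n}(r)$ the number of zeros of $f$ in the disk of radius $r$ (with multiplicity), one has, for every $\beta \geq \rho$, the existence of $C_\beta > 0$ with $\widetilde{n}(r) \leq C_\beta r^\beta$ for all sufficiently large $r$. Since $\mathcal{Z}$ is a subset of the zero set of $f$, the counting function $n(r) = \#\{i : |\lambda_i| < r\}$ is dominated by $\widetilde{n}(r)$, and we obtain $n(r) \leq Cr^\rho$ for large $r$, which proves (1) after adjusting the constant to cover small $r$.

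For assertion (2), I would argue in two equivalent ways. Directly: ordering the moduli $|\lambda_i|$ in nondecreasing order, the bound $n(r) \leq Cr^\rho$ immediately yields $|\lambda_n| \geq (n/C)^{1/\rho}$, hence for every $s > \rho$,
$$\sum_{i} \frac{1}{|\lambda_i|^s} \leq C^{s/\rho} \sum_{n=1}^\infty \frac{1}{n^{s/\rho}} < \infty,$$
since $s/\rho > 1$. Alternatively, one invokes Levin (Ch.~1, Thm.~4 of \cite{levin}): the order of an entire function dominates the convergence exponent of its zero set, so $\sum_i |z_i|^{-s}$ converges for every $s > \rho$ where $\{z_i\}$ are the zeros of $f$; since $\mathcal{Z} \subset \{z_i\}$, the same holds for $\{\lambda_i\}$.

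The only mildly delicate point is whether one obtains $n(r) \leq Cr^\rho$ at the exact critical exponent, as opposed to $Cr^{\rho+\varepsilon}$ for every $\varepsilon>0$. For assertion (2) the weaker form is entirely sufficient, since summability of $n^{-s/\rho}$ for $s>\rho$ only requires $s/\rho > 1$; the stronger form at the critical exponent follows from the same Stein–Shakarchi statement applied with $\beta = \rho$, which is valid when $\rho$ is not an integer and is the substantive case in our applications (as in the proof of Theorem \ref{prop6}, where we already observed $\rho \notin \mathbb{Z}$). Thus the abstract framework produces both conclusions with no new obstacles beyond those handled in Theorem \ref{prop6}.
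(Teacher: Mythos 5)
Your proposal is correct and follows essentially the same route as the paper: the paper's proof of this theorem simply defers to the proof of Theorem~\ref{prop6}, which likewise identifies $\rho$ with the order of $f$ via the coefficient formula, invokes the Jensen-type zero-counting bound of Stein--Shakarchi (Ch.~5, Thm.~2.1) for assertion (1), and uses the comparison with the convergence exponent of the zero set (Levin) for assertion (2). Your remark that the bound at the critical exponent is the only delicate point, while the weaker bound $n(r)\le C r^{\rho+\varepsilon}$ already suffices for assertion (2), is a fair and accurate observation about the same argument.
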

\begin{proof}
	The proof of this result is contained in the proof of Theorem \ref{prop6}.
\end{proof}

As previously mentioned, this result allows us to improve the convergence of the series $\sum_{i} \frac{1}{\lambda_{i,\mathcal{D}}^{s}}$ for all $s > \frac{1}{2}$, thereby establishing the traceability of the negative $s$-fractional powers of $\Delta_{W,V,\mathcal{D}}$. In particular, for the case $s = 1$, this recovers the traceability of the operator $(-\Delta_{W,V,\mathcal{D}})^{-1}$, which was already proved in Section~\ref{sec:trace}. In the same spirit, we have the following result:
\begin{theorem}\label{summarizationthem}
	Let $A:H\to H$ be a trace-class operator defined on a separable Hilbert space. Furthermore, let $\Lambda^{k}(A)$ denote the $k$-th exterior power of the operator $A$. Then, for every 
	$$s>-\limsup _{n \rightarrow+\infty} \frac{n \ln n}{\ln \left|\text{tr}\left[\Lambda^{k}(A)\right]\right|}$$ 
	we have $\sum_{i\in\mathbb{N}} \frac{1}{|\lambda_{i}|^{s}} <\infty$.
\end{theorem}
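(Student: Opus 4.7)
My plan is to realize the relevant $\lambda_i$ as zeros of an explicit entire function and then invoke Theorem \ref{theorem:7colofmain}. Since $A$ is trace-class on a separable Hilbert space, the Fredholm determinant $f(z):=\det(I-zA)$ is a well-defined entire function, and the Plemelj--Smithies formula supplies the power-series expansion
\[
f(z) \;=\; \sum_{k=0}^{\infty} (-z)^{k}\,\text{tr}\bigl[\Lambda^{k}(A)\bigr],
\]
so its coefficients are $a_{n}=(-1)^{n}\text{tr}[\Lambda^{n}(A)]$ and in particular $|a_{n}|=|\text{tr}[\Lambda^{n}(A)]|$. By Lidskii's theorem, the zeros of $f$, counted with algebraic multiplicity, are exactly the reciprocals $\lambda_{i}:=1/\mu_{i}$ of the non-zero eigenvalues $\{\mu_{i}\}_{i\in\mathbb{N}}$ of $A$; since $A$ is compact, $\mu_{i}\to 0$, so $|\lambda_{i}|\to\infty$, which is consistent with $f$ being a non-polynomial entire function.

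With these identifications in place, the argument from the proof of Theorem \ref{theorem:7colofmain} applies verbatim to $f$: one estimates the order of growth of $f$ from its coefficients, combines Jensen's inequality with the Stein--Shakarchi bound $n(r)\le C r^{\rho}$ for the zero-counting function, and identifies $\rho$ with the convergence exponent of the zero set $\{\lambda_{i}\}$. This immediately yields
\[
\sum_{i\in\mathbb{N}}\frac{1}{|\lambda_{i}|^{s}}<\infty \qquad\text{for every }s>\rho,
\]
where
\[
\rho \;=\; -\limsup_{n\to\infty}\frac{n\ln n}{\ln\bigl|\text{tr}[\Lambda^{n}(A)]\bigr|},
\]
which is precisely the claim.

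The main obstacle, in my view, is citational rather than analytical: one must invoke the Plemelj--Smithies expansion, valid for any trace-class operator on a separable Hilbert space (see, e.g., Simon's monograph on trace ideals, Gohberg--Goldberg--Krupnik, or Grothendieck's original memoir), together with Lidskii's theorem, in order to match the analytic multiplicities of the zeros of $f$ with the algebraic multiplicities of the eigenvalues of $A$. A secondary, minor point is that Theorem \ref{theorem:7colofmain} is phrased for a compact operator whose eigenvalues tend to zero, whereas here $|\lambda_{i}|\to\infty$; however, the underlying convergence-exponent argument only uses the entire function and its zero set, so this discrepancy does not affect its applicability.
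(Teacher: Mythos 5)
Your proposal is correct and takes essentially the same route as the paper, which likewise expands the Fredholm determinant $\det(I-zA)$ with coefficients $\pm\operatorname{tr}\left[\Lambda^{k}(A)\right]$ (citing Gohberg et al.) and then invokes Theorem~\ref{theorem:7colofmain}. Your additional care in identifying the zeros of $\det(I-zA)$ as the reciprocals of the nonzero eigenvalues of $A$ (rather than the eigenvalues themselves, as the paper loosely states) and in observing that the argument of Theorem~\ref{theorem:7colofmain} only uses the entire function and its zero set is a useful clarification of the paper's two-line proof, not a departure from it.
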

\begin{proof}
	Since $A$ is a trace-class operator, it follows from \cite[Theorem 5.2]{Gohberg_2000} that $z\mapsto \det(I-zA)=\sum_{k=0}^{\infty}z^{k}\mbox{tr}\left[\Lambda^{k}(A)\right]$ is an entire function whose zeros are exactly the eigenvalues of $A$. Consequently, using Theorem \ref{theorem:7colofmain}, the result follows.
\end{proof}

\section{Applications}\label{sec:applications}

In this section, we present several applications of the results derived in the previous sections. These applications are organized into three distinct subsections. The first subsection is devoted to the study of higher-order Sobolev spaces and the nuclearity of the space $C^{\infty}_{W,V}(\mathbb{T})$. We introduce the fractional-order Sobolev spaces, present some compatibility results, and discuss applications to the traceability of $(I+\Delta_{W,V})^{-s}$. We then show how this framework leads to a regularity result for fractional elliptic problems. The second subsection is dedicated to the existence of solutions for fractional stochastic differential equations driven by $V$-Gaussian white noise. In the third and final subsection, we take advantage of our characterization of $C^{\infty}_{W,V}(\mathbb{T})$ as a nuclear space to establish existence and uniqueness of solutions for parabolic stochastic partial differential equations.

\subsection{Nuclearity of $C^{\infty}_{W,V}(\mathbb{T})$}
Recall that the usual higher-order Sobolev spaces play a crucial role in the study of fractional Sobolev spaces. The situation in this context is no different. Based on the characterizations introduced in Section \ref{sec:review}, we begin by introducing the higher-order $W$-$V$-Sobolev spaces.
\begin{definition}\label{highersob}
	Given $k\in\mathbb{N}$, we define the $W$-$V$-Sobolev space of order $k$ as
	$$H^{k}_{W,V}(\mathbb{T})=\left\{f\in L^{2}_{V}(\mathbb{T}) : \exists D_{W,V}^{(n)}f\in L^{2}_{\kappa(n)}(\mathbb{T})\,\forall n=1,\ldots,k\right\},$$
	where $D_{W,V}^{(n)}$ is considered in the weak sense (see Definition \ref{defWleft}) and
	\begin{equation}\label{kappa_order}
		\kappa(n):=\left\{ \begin{array}{ll}
			W,\;\mbox{if}\; n\; \mbox{is odd}; \\
			V,\;\mbox{if}\; n\; \mbox{is even}.\end{array} \right. 
	\end{equation}
	Furthermore, the space $H^{k}_{W,V}(\mathbb{T})$, endowed with the norm
	$$\|f\|_{k,W,V}(\mathbb{T}):=\left(\|f\|_{V}^{2}+\sum_{i=1}^{k}\|\partial^{(i)}_{W,V}f\|_{\kappa(i)}^{2}\right)^{1/2},$$
	is a Hilbert space.
\end{definition}

\begin{remark}
	We note that, in order to define the weak $V$-derivatives, we must use the space $C^{\infty}_{W,V}(\mathbb{T})$—the space of regular functions associated with the adjoint problem—instead of $C^{\infty}_{V,W}(\mathbb{T})$.
\end{remark}

In our next result, we generalize the relation \eqref{fourierHWV} by characterizing $H^{k}_{W,V}(\mathbb{T})$ in terms of its Fourier coefficients.
\begin{theorem}\label{fouriersob}
	We have the following characterization:
	$$H^{k}_{W,V}(\mathbb{T})=\left\{f\in L^{2}_{V}(\mathbb{T}) : f=\alpha_{0}+\sum_{i=1}^{\infty}\alpha_{i}\nu_{i}\ \text{and}\ \sum_{i=1}^{\infty}\lambda_{i}^{k}\alpha_{i}^2<\infty\right\}.$$
\end{theorem}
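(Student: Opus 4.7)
The plan is to proceed by induction on $k$, with the base case $k=1$ being exactly \eqref{fourierHWV}. Writing the Fourier expansion $f = \alpha_0 + \sum_{i=1}^{\infty} \alpha_i \nu_i$ with $\alpha_i = \langle f,\nu_i\rangle_V$, I will first establish the Fourier-coefficient identities
\begin{equation*}
\langle D^{(2m)}_{W,V} f, \nu_i\rangle_V = (-\lambda_i)^m \alpha_i, \qquad \int_{\mathbb{T}} D^{(2m+1)}_{W,V} f\cdot D^-_W \nu_i\, dW = (-1)^m \lambda_i^{m+1}\alpha_i,
\end{equation*}
valid for every $f\in H^k_{W,V}(\mathbb{T})$ and every $i\ge 1$ whenever $2m\le k$ and $2m+1\le k$ respectively. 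These follow by iterating the weak integration-by-parts identity of Definition \ref{defWleft} together with its $V$-dual. Three ingredients make the iteration legitimate: the eigenequation $-\Delta_{W,V}\nu_i = \lambda_i\nu_i$; the regularity $\nu_i\in C^{\infty}_{W,V}(\mathbb{T})$ granted by Corollary \ref{regeigenvect}; and the fact that $D^-_W\nu_i\in C^{\infty}_{V,W}(\mathbb{T})$, which is immediate from the explicit representation of $\nu_i$ in Proposition \ref{caraceige} together with \eqref{difcossinWV}--\eqref{difsincosVW}. Each double application of integration by parts moves two derivatives off $f$ onto $\nu_i$ and, via the eigenequation, produces a single factor of $-\lambda_i$.

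Combining the identities above with Parseval in $L^2_V(\mathbb{T})$ (for the even-order statement) and in the closed subspace $\mathcal{W}\subset L^2_{W,0}(\mathbb{T})$ endowed with the complete orthonormal basis $\{\lambda_i^{-1/2}D^-_W\nu_i\}$ from Lemma \ref{Wderivclosed} (for the odd-order statement) yields
\begin{equation*}
\|D^{(j)}_{W,V} f\|^2_{\kappa(j)} = \sum_{i=1}^{\infty} \lambda_i^j \alpha_i^2, \qquad 1\le j\le k.
\end{equation*}
Taking $j=k$ proves the $\subseteq$ inclusion.

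For the reverse inclusion, I would consider the partial sums $S_N := \alpha_0 + \sum_{i=1}^{N}\alpha_i\nu_i\in C^{\infty}_{W,V}(\mathbb{T})$, for which the explicit formulas $D^{(2m)}_{W,V}\nu_i = (-\lambda_i)^m\nu_i$ and $D^{(2m+1)}_{W,V}\nu_i = (-\lambda_i)^m D^-_W\nu_i$ hold pointwise. Orthonormality of $\{\nu_i\}$ in $L^2_V$ and of $\{\lambda_i^{-1/2}D^-_W\nu_i\}$ in $\mathcal{W}$ then shows that $\{D^{(j)}_{W,V} S_N\}_N$ is Cauchy in $L^2_{\kappa(j)}(\mathbb{T})$ for each $1\le j\le k$; the hypothesis $\sum_i\lambda_i^k\alpha_i^2<\infty$ controls all lower orders, since only finitely many positive $\lambda_i$ lie below one. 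Letting $F_j$ denote the $L^2_{\kappa(j)}$-limit, I would pass to the limit in the identity defining the weak derivative for $S_N$, tested against an arbitrary element of $C^{\infty}_{V,W}(\mathbb{T})$ or $C^{\infty}_{W,V}(\mathbb{T})$ according to parity, to identify $F_j$ with $D^{(j)}_{W,V} f$ in the weak sense.

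The main obstacle I anticipate is the bookkeeping of test-function spaces at each step of the iterated integration by parts, since weak $W$-left and $V$-right derivatives are defined by duality against $C^{\infty}_{V,W}(\mathbb{T})$ and $C^{\infty}_{W,V}(\mathbb{T})$ respectively. In particular, verifying that $D^-_W\nu_i\in C^{\infty}_{V,W}(\mathbb{T})$, which is what allows the odd-order identities to go through, is where the generalized-trigonometric representation developed in Section \ref{sect42} plays an essential role.
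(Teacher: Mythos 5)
Your proposal is correct and follows essentially the same route as the paper's proof: the forward inclusion via Parseval's identity together with the orthonormal basis $\left\{\lambda_i^{-1/2}D^-_W\nu_i\right\}$ of $\mathcal{W}$ from Lemma \ref{Wderivclosed}, and the reverse inclusion via Cauchy partial sums built from eigenfunctions in $C^{\infty}_{W,V}(\mathbb{T})$. The only difference is presentational: the paper identifies the limit of the partial sums by invoking the completeness of $H^{k}_{W,V}(\mathbb{T})$ directly, whereas you pass to the limit in the weak-derivative pairing, which amounts to the same closedness argument.
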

\begin{proof}
	Let $k$ be odd and $f=\alpha_{0}+\sum_{i=1}^{\infty}\alpha_{i}\nu_{i}$. By Lemma \ref{Wderivclosed}, we have
	$$\sum_{i=1}^{\infty}\alpha_{i}\lambda_{i}^{\frac{k-1}{2}}\sqrt{\lambda_{i}}\frac{D^{-}_{W}\nu_{i}}{\sqrt{\lambda_i}}=D^{(k)}_{W,V}f\in L^{2}_{W}(\mathbb{T}).$$
	Indeed, by Parseval's identity,
	$$ \sum_{i=1}^{\infty}\alpha_{i}^2\lambda_{i}^{k} = \|D^{(k)}_{W,V}f\|^{2}_{W} < \infty.$$
	Conversely, let $f = \alpha_{0} + \sum_{i=1}^{\infty} \alpha_{i} \nu_{i}$ and suppose $\sum_{i=1}^{\infty} \lambda_{i}^{k} \alpha_{i}^2 < \infty$. Recall from \eqref{asymplambda} that $\lambda_{i} \to \infty$ as $i \to \infty$. This implies that for large $i \in \mathbb{N}$, $\lambda_{i} > 1$. Therefore, if $\sum_{i=1}^{\infty} \alpha_{i}^2 \lambda_{i}^{k} < \infty$, then $\sum_{i=1}^{\infty} \alpha_{i}^2 \lambda_{i}^{j} < \infty$ for all $j \le k$. Now, let $(f_k)_{k \in \mathbb{N}}$ be the sequence given by
	$$f_{k} = \alpha_{0} + \sum_{i=1}^{k} \alpha_{i} \nu_{i}.$$
	We have $f_{k} \in C^{\infty}_{W,V}(\mathbb{T})\subset H^{k}_{W,V}(\mathbb{T})$ and $D^{(n)}_{W,V} f_{k}$ is Cauchy in $L^{2}_{\kappa(n)}(\mathbb{T})$ for $n = 1, \ldots, k$. By the completeness of $H_{W,V}^k(\mathbb{T})$, there exists $\widetilde{f}\in H^{k}_{W,V}(\mathbb{T})$ such that $f_{k} \to \widetilde{f}$ with respect to the norm $\|\,\cdot\,\|_{k,W,V}$. By uniqueness of the $L^2_V(\mathbb{T})$-limit, we have $\widetilde{f} = f$. Thus, $f\in H^{k}_{W,V}(\mathbb{T})$.
\end{proof}

\begin{corollary}\label{approxsmoothhigh}
	The following characterization of the space $H^{k}_{W,V}(\mathbb{T})$ holds: 
	\begin{equation*}\label{characHk}
		H^{k}_{W,V}(\mathbb{T})=\overline{C^{\infty}_{W,V}(\mathbb{T})}^{\|\,\cdot\,\|_{k,W,V}}.
	\end{equation*}
\end{corollary}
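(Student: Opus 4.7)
The plan is that this corollary is almost immediate once we exploit the eigenbasis characterization in Theorem \ref{fouriersob}. The inclusion $\overline{C^{\infty}_{W,V}(\mathbb{T})}^{\|\,\cdot\,\|_{k,W,V}}\subset H^{k}_{W,V}(\mathbb{T})$ is free: $H^{k}_{W,V}(\mathbb{T})$ is a Hilbert space (Definition \ref{highersob}), it contains $C^{\infty}_{W,V}(\mathbb{T})$, and any norm-closed subspace of a Hilbert space is again in the Hilbert space. So the content is the reverse inclusion, and I would simply reuse the partial-sum argument already present in the proof of Theorem \ref{fouriersob}.

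More precisely, given $f\in H^{k}_{W,V}(\mathbb{T})$, Theorem \ref{fouriersob} gives the expansion $f=\alpha_{0}+\sum_{i=1}^{\infty}\alpha_{i}\nu_{i}$ in $L^{2}_{V}(\mathbb{T})$ with $\sum_{i=1}^{\infty}\lambda_{i}^{k}\alpha_{i}^{2}<\infty$. I would then set $f_{N}=\alpha_{0}+\sum_{i=1}^{N}\alpha_{i}\nu_{i}$ and make two observations. First, by Corollary \ref{regeigenvect} each $\nu_{i}$ admits a representative in $C^{\infty}_{W,V}(\mathbb{T})$, and since the constants belong to $C^{\infty}_{W,V}(\mathbb{T})$ by definition, $f_{N}\in C^{\infty}_{W,V}(\mathbb{T})$ for every $N$. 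Second, iterating $-\Delta_{W,V}\nu_{i}=\lambda_{i}\nu_{i}$ and using Lemma \ref{Wderivclosed} shows that $\|D^{(n)}_{W,V}\nu_{i}\|^{2}_{\kappa(n)}=\lambda_{i}^{n}$, so by Parseval
\begin{equation*}
\|f-f_{N}\|^{2}_{k,W,V}=\sum_{i=N+1}^{\infty}\alpha_{i}^{2}+\sum_{n=1}^{k}\sum_{i=N+1}^{\infty}\alpha_{i}^{2}\lambda_{i}^{n}.
\end{equation*}
Because $\lambda_{i}\to\infty$, one has $\lambda_{i}^{n}\le C_{n}(1+\lambda_{i}^{k})$ for $n\le k$, so the hypothesis $\sum_{i}\lambda_{i}^{k}\alpha_{i}^{2}<\infty$ together with $\sum_{i}\alpha_{i}^{2}<\infty$ makes every series on the right-hand side convergent, and its tails vanish as $N\to\infty$. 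This proves $f_{N}\to f$ in $\|\,\cdot\,\|_{k,W,V}$, hence $f\in\overline{C^{\infty}_{W,V}(\mathbb{T})}^{\|\,\cdot\,\|_{k,W,V}}$.

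The only mildly subtle point, and the one I would treat carefully, is identifying the strong limit of $D^{(n)}_{W,V}f_{N}$ (which exists in $L^{2}_{\kappa(n)}(\mathbb{T})$ by the Cauchy argument above) with the weak derivative $D^{(n)}_{W,V}f$ of the $L^{2}_{V}$-limit $f$. This is exactly what is already done in the converse part of the proof of Theorem \ref{fouriersob}: by completeness of $H^{k}_{W,V}(\mathbb{T})$ the sequence $(f_{N})$ has a limit $\widetilde{f}$ in the $\|\,\cdot\,\|_{k,W,V}$ norm, and uniqueness of the $L^{2}_{V}$-limit forces $\widetilde{f}=f$. I do not expect any real obstacle beyond bookkeeping; the corollary is essentially a repackaging of the constructive proof of Theorem \ref{fouriersob}.
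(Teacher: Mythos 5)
Your proposal is correct and follows essentially the same route as the paper: the corollary is precisely a repackaging of the converse direction in the proof of Theorem \ref{fouriersob}, where the partial sums $f_N=\alpha_0+\sum_{i=1}^N\alpha_i\nu_i$ lie in $C^{\infty}_{W,V}(\mathbb{T})$ and converge to $f$ in $\|\cdot\|_{k,W,V}$, while completeness of $H^k_{W,V}(\mathbb{T})$ gives the easy inclusion of the closure. Your explicit tail estimate via $\|D^{(n)}_{W,V}\nu_i\|^2_{\kappa(n)}=\lambda_i^n$ and the identification of the limit by uniqueness in $L^2_V(\mathbb{T})$ match the paper's argument.
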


We have the following strong regularity for these higher-order $W$-$V$-Sobolev spaces:

\begin{theorem}\label{regularity}
	For every $k \geq 1$,
	$$H^k_{W,V}(\mathbb{T}) \subset C_{W,V}^{k-1}(\mathbb{T}),$$
	where $C^0_{W,V}(\mathbb{T}) = \{f:\mathbb{T}\to\mathbb{R} : \text{$f$ is càdlàg and $D(f)\subset D(V)$}\}$, and $D(g)$ denotes the set of discontinuity points of $g$.
\end{theorem}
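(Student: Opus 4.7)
The plan is to argue by induction on $k$, combining the density of $C^\infty_{W,V}(\mathbb{T})$ in $H^k_{W,V}(\mathbb{T})$ (Corollary~\ref{approxsmoothhigh}) with a Sobolev-type embedding that upgrades $H^1$-convergence to uniform convergence. Uniform convergence is the right mode because it simultaneously preserves the c\`adl\`ag property and prevents new discontinuities from appearing outside the union of the discontinuity sets of the approximants.

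For the base case $k=1$, I would take $f \in H^1_{W,V}(\mathbb{T})$ and choose a sequence $(f_n) \subset C^\infty_{W,V}(\mathbb{T})$ with $f_n \to f$ in $\|\cdot\|_{1,W,V}$, as provided by Corollary~\ref{approxsmoothhigh}. The substantive analytic step is to establish
\begin{equation*}
\|g\|_\infty \leq C\bigl(\|g\|_V + \|D^-_W g\|_W\bigr) = C\|g\|_{1,W,V}
\end{equation*}
for every $g \in H^1_{W,V}(\mathbb{T})$. This follows by applying Theorem~\ref{sobchar2} to write $g(x) - g(0) = \int_{(0,x]} D^-_W g\, dW$, using Cauchy--Schwarz to bound the integral by $\|D^-_W g\|_W \sqrt{W(1)}$, and then controlling the additive constant $g(0)$ by comparing it with the spatial mean $V(1)^{-1}\int g\, dV$ (which is continuous on $L^2_V$) and absorbing the resulting error into the oscillation estimate. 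This embedding turns the $H^1$-convergence $f_n \to f$ into uniform convergence of the natural c\`adl\`ag representatives. Since each $f_n$ already lies in $C^0_{W,V}(\mathbb{T})$ by virtue of being a test function, and since the uniform limit of c\`adl\`ag functions whose discontinuity sets are contained in a common set $D(V)$ is c\`adl\`ag with discontinuity set still contained in $D(V)$, the limit $f$ also lies in $C^0_{W,V}(\mathbb{T})$.

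For the inductive step, assume $H^{k-1}_{W,V}(\mathbb{T}) \subset C^{k-2}_{W,V}(\mathbb{T})$. Given $f \in H^k_{W,V}(\mathbb{T})$, I would set $h := D^-_W f$ and note that, by the very definition of the alternating operators in \eqref{nderiv}, one has $D^{(j)}_{V,W} h = D^{(j+1)}_{W,V} f \in L^2_{\kappa(j+1)}(\mathbb{T})$ for $j = 0,\dots,k-1$; consequently $h \in H^{k-1}_{V,W}(\mathbb{T})$, the analogous Sobolev space with the roles of $V$ and $W$ interchanged. The symmetric version of the theorem (the inductive hypothesis applied to the swapped framework) yields $h \in C^{k-2}_{V,W}(\mathbb{T})$. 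Pairing this with the base case applied to $f$ and iterating on the chain $f \mapsto D^-_W f \mapsto D^+_V D^-_W f \mapsto \cdots$ gives, at each level, the existence and correct lateral continuity of $D^{(j)}_{W,V} f$ for every $j \leq k-1$, which is exactly what is required for $f \in C^{k-1}_{W,V}(\mathbb{T})$.

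The main obstacle is the Sobolev embedding in the base case. The difficulty is that pointwise evaluation is not a bounded linear functional on $L^2_V(\mathbb{T})$ at points outside $D(V)$, so the constant appearing in Theorem~\ref{sobchar2} cannot be controlled by pointwise data of $g$; it must instead be recovered from the spatial mean via Cauchy--Schwarz against the constant $1$. A secondary, but important, subtlety is propagating the containment $D(\tilde f) \subset D(V)$ to the uniform limit: here it is essential that our approximants are test functions in $C^\infty_{W,V}(\mathbb{T})$, for which the discontinuity sets are already controlled, so that uniform convergence (rather than merely $L^2_V$-convergence) prevents the creation of new jumps outside $D(V)$ in the limit.
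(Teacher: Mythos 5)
Your overall architecture---a base case for $k=1$ plus an induction that passes to $h:=D^-_W f$ in the $V,W$-swapped framework---is sound and close in spirit to the paper's argument, which simply invokes the characterization of \cite[Theorem 2]{keale} (Theorem~\ref{sobchar2} here) at every level of the induction. For the base case, however, your density-plus-embedding route is largely redundant: the very application of Theorem~\ref{sobchar2} that you use to prove $\|g\|_\infty\le C\|g\|_{1,W,V}$ already exhibits the c\`adl\`ag representative $g(0)+\int_{(0,x]}D^-_W g\,dW$, whose discontinuities are confined to the atoms of $dW$, so nothing is gained by approximating with test functions and passing to a uniform limit. If you keep that route, you must work throughout with the canonical c\`adl\`ag representatives (the sup-norm is meaningless on $L^2_V$-classes) and justify $\widetilde{f_n-f}=\tilde f_n-\tilde f$; this does hold, by uniqueness of c\`adl\`ag representatives, since $dV$ has full support. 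Note also that what either argument actually controls is containment of the jump set in $D(W)$ (an integral against $dW$, or a $W$-left differentiable c\`adl\`ag function, can only jump where $W$ does); the containment in $D(V)$ appearing in the statement is asserted in your proof ``by virtue of being a test function'' but is not justified by anything you write.

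The genuine gap is in the inductive step. Membership in $C^{k-1}_{W,V}(\mathbb{T})$ requires the \emph{pointwise} lateral derivatives $D^{(j)}_{W,V}\tilde f$ of the representative to exist at every point with the correct one-sided continuity, whereas what your argument produces is (a) a c\`adl\`ag representative $\tilde f$ of $f$ and (b), from the swapped inductive hypothesis, a representative $\tilde h\in C^{k-2}_{V,W}(\mathbb{T})$ of the \emph{weak} derivative $h=D^-_W f$. From (a) and (b) alone one cannot conclude that $\tilde f$ is $W$-left differentiable with $D^-_W\tilde f=\tilde h$; the missing link is the identification of weak and strong derivatives, which is precisely what the paper extracts from \cite[Theorem 2]{keale}: by Theorem~\ref{sobchar2}, $\tilde f(x)=\tilde f(0)+\int_{(0,x]}\tilde h\,dW$, and since $\tilde h$ is c\`agl\`ad, the generalized fundamental theorem of calculus recalled in Section~\ref{sec:review} gives $D^-_W\tilde f=\tilde h$ at every point; iterating this, with the roles of $W$ and $V$ interchanged at alternate levels, is what actually yields the pointwise derivatives you assert ``at each level.'' With this link inserted your induction closes and then coincides, in substance, with the paper's proof, which consists exactly of applying this representation-plus-FTC mechanism at each order.
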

\begin{proof}
	The case $k=1$ follows directly from \cite[Theorem 2]{keale}. By induction on $k$, and using \cite[Theorem 2]{keale} again, we obtain the result for any $k \geq 1$.
\end{proof}

As an immediate consequence of Theorem \ref{regularity}, we have the following corollary:
\begin{corollary}\label{hinfinityreg}
	The following equality of sets holds:
	$$\bigcap_{k=1}^{\infty}H^{k}_{W,V}(\mathbb{T}) =: H^{\infty}_{W,V}(\mathbb{T}) = C^\infty_{W,V}(\mathbb{T}).$$
\end{corollary}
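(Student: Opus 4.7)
The plan is to prove the two inclusions separately, with the nontrivial one being a direct consequence of Theorem~\ref{regularity}.

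For the inclusion $H^{\infty}_{W,V}(\mathbb{T}) \subseteq C^{\infty}_{W,V}(\mathbb{T})$, I would take $f \in H^{\infty}_{W,V}(\mathbb{T})$ and apply Theorem~\ref{regularity} for \emph{every} $k \geq 1$, obtaining that $f \in C^{k-1}_{W,V}(\mathbb{T})$ for all $k$. This yields strong pointwise $W$-$V$-derivatives of every order with the cadlag/caglad regularity alternation required by the definition of $C^{n}_{W,V,0}(\mathbb{T})$. Since the various representatives provided by Theorem~\ref{regularity} for different $k$ coincide on a common dense set and share the same one-sided continuity, they patch together into a single function lying in all the $C^{n}_{W,V,0}(\mathbb{T})$ spaces (up to a constant). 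The zero-mean conditions $\int f\, dV = 0$ or $\int f\, dW = 0$ embedded in the sets $C_{\sigma(n+1)}(\mathbb{T})$ are automatic for the higher derivatives by periodicity: applying the fundamental theorem of calculus to the periodic functions $D^{(n-1)}_{W,V}f$ yields $\int_{\mathbb{T}} D^{(n)}_{W,V}f\, d\kappa(n) = 0$. After subtracting off the appropriate constant, this places $f$ in $\langle 1 \rangle \oplus \bigcap_{n=1}^{\infty} C^{n}_{W,V,0}(\mathbb{T}) = C^{\infty}_{W,V}(\mathbb{T})$.

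For the reverse inclusion $C^{\infty}_{W,V}(\mathbb{T}) \subseteq H^{\infty}_{W,V}(\mathbb{T})$, I would take $f \in C^{\infty}_{W,V}(\mathbb{T})$, so $f = c + g$ with $g \in \bigcap_{n=1}^{\infty} C^{n}_{W,V,0}(\mathbb{T})$. For every $n \geq 1$, the strong pointwise derivative $D^{(n)}_{W,V}f$ exists and lies in $C_{\sigma(n)}(\mathbb{T})$, hence is bounded on the compact torus and therefore belongs to $L^{2}_{\kappa(n)}(\mathbb{T})$. Iterated application of the integration by parts formula (Proposition~\ref{intbypartsprop2}) against test functions in $C^{\infty}_{V,W}(\mathbb{T})$ (or $C^{\infty}_{W,V}(\mathbb{T})$, depending on the parity of $n$) shows that this strong derivative satisfies the defining duality relation \eqref{defWd2} of Definition~\ref{defWleft} at every order; that is, the strong derivative coincides with the weak derivative. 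Consequently $f \in H^{k}_{W,V}(\mathbb{T})$ for every $k$, and thus $f \in H^{\infty}_{W,V}(\mathbb{T})$.

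The most delicate point is the first inclusion, specifically the bookkeeping that reconciles the representative chosen by Theorem~\ref{regularity} at each level $k$ with the intersection structure of $\bigcap_n C^{n}_{W,V,0}(\mathbb{T})$. Since that theorem produces a potentially different modification on a $V$-null set for each $k$, one must argue that right-continuity (for odd orders) and left-continuity (for even orders), together with the density of any such null-set complement in $\mathbb{T}$, force all these modifications to coincide pointwise with a single function. This is the same type of uniqueness-of-representative argument used in the remark following Corollary~\ref{regeigenvect}, applied once rather than once per~$k$.
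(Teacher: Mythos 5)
Your proposal is correct and follows essentially the same route as the paper, which states this corollary as an immediate consequence of Theorem~\ref{regularity} (with the easy inclusion $C^{\infty}_{W,V}(\mathbb{T})\subset H^{k}_{W,V}(\mathbb{T})$ taken for granted, as it is also used implicitly in the proof of Theorem~\ref{fouriersob}). Your write-up simply makes explicit the two details the paper leaves unstated: the patching of the $V$-a.e.\ representatives via one-sided continuity (the same argument as in the remark after Corollary~\ref{regeigenvect}) and the identification of strong with weak derivatives via integration by parts.
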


In the following result, we use Theorem \ref{prop6} and the results established in this section to conclude that $C^{\infty}_{W,V}(\mathbb{T})$ is a nuclear space (see Appendix \ref{app:spde_kallianpur} for the definition of nuclear spaces).

\begin{theorem}
	The space $C^{\infty}_{W,V}(\mathbb{T})$ is a nuclear space. 
\end{theorem}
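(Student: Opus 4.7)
The plan is to use the standard criterion for nuclearity of a countably Hilbertian space: if $E = \bigcap_k H_k$ is the projective limit of a decreasing chain of Hilbert spaces with canonical inclusions $\iota_{k+1,k}:H_{k+1}\hookrightarrow H_k$, then $E$ is nuclear provided that for every $k$ there exists $m>k$ such that $\iota_{m,k}$ is Hilbert--Schmidt. By Corollary~\ref{hinfinityreg}, $C^{\infty}_{W,V}(\mathbb{T}) = H^{\infty}_{W,V}(\mathbb{T}) = \bigcap_{k} H^{k}_{W,V}(\mathbb{T})$, so it is enough to verify this Hilbert--Schmidt property along the chain of $W$-$V$-Sobolev spaces with their natural topology induced by $\{\|\cdot\|_{k,W,V}\}_{k\in\mathbb{N}}$.

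The first step will be to produce, in each $H^{k}_{W,V}(\mathbb{T})$, an explicit orthonormal basis built from the eigenfunctions $\{\nu_i\}_{i\geq 0}$ of $-\Delta_{W,V}$. Using Proposition~\ref{intbypartsprop2} together with Lemma~\ref{Wderivclosed} and iterating the eigenrelation $-\Delta_{W,V}\nu_i = \lambda_i\nu_i$, one obtains $\|D^{(j)}_{W,V}\nu_i\|_{\kappa(j)}^{2} = \lambda_i^{j}$ for every $j\leq k$, so
\[
\|\nu_i\|_{k,W,V}^{2} \;=\; 1+\lambda_i+\lambda_i^{2}+\cdots+\lambda_i^{k} \;=:\; \mu_{i,k}^{2}.
\]
Combining this with the Fourier characterization in Theorem~\ref{fouriersob} shows that $\{\nu_i/\mu_{i,k}\}_{i\geq 0}$ is an orthonormal basis of $H^{k}_{W,V}(\mathbb{T})$.

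With this basis in hand, I would compute the Hilbert--Schmidt norm of $\iota_{k+1,k}$ directly:
\[
\|\iota_{k+1,k}\|_{\mathrm{HS}}^{2} \;=\; \sum_{i\geq 0} \frac{\mu_{i,k}^{2}}{\mu_{i,k+1}^{2}} \;=\; 1 + \sum_{i\geq 1} \frac{1+\lambda_i+\cdots+\lambda_i^{k}}{1+\lambda_i+\cdots+\lambda_i^{k+1}},
\]
where the $i=0$ contribution is just $1$ since $\lambda_0 = 0$. Since $\lambda_i\to\infty$ by \eqref{asymplambda}, each summand is eventually bounded by $(k+1)/\lambda_i$, so summability reduces to the convergence of $\sum_{i\geq 1}\lambda_i^{-1}$. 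This is precisely the point where Theorem~\ref{prop6} is decisive: it provides some $\rho\in(0,1/2]$ with $\sum_{i\geq 1}\lambda_i^{-s}<\infty$ for every $s>\rho$, and the choice $s=1>1/2\geq\rho$ closes the argument. Hence $\iota_{k+1,k}$ is Hilbert--Schmidt for every $k$, the nuclearity criterion is satisfied, and the result follows.

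The genuine obstacle here is not the abstract functional-analytic setup but rather the spectral asymptotics: without the sharp estimate \eqref{tight_est} and the resulting bound $Cn^{1/\rho}\leq \lambda_n$ with $\rho\leq 1/2$ from Theorem~\ref{prop6}, one would not be able to guarantee summability of $\sum\lambda_i^{-1}$ in the completely general setting allowed by $W$ and $V$ (which may carry dense atomic parts). Once that spectral lower bound is granted, identifying the eigenfunction orthonormal basis at each Sobolev level and computing the ratio of norms is routine, and the Hilbert--Schmidt criterion for nuclearity can be invoked immediately.
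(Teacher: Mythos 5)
Your proposal is correct and follows essentially the same route as the paper: reduce nuclearity to showing that inclusions between the higher-order $W$-$V$-Sobolev spaces are Hilbert--Schmidt, diagonalize via the eigenfunctions $\nu_i$, and conclude from the spectral bound of Theorem~\ref{prop6} (the paper uses $\lambda_n \geq Cn^{1/\rho}$ directly, you use $\sum_i \lambda_i^{-1} < \infty$ since $1 > \rho$ --- the same ingredient). Your bookkeeping of the exact norm $\|\nu_i\|_{k,W,V}^2 = 1+\lambda_i+\cdots+\lambda_i^k$ is in fact slightly more careful than the paper's shorthand normalization, but the argument is the same.
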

\begin{proof}
	From Corollary \ref{hinfinityreg}, we have $C^{\infty}_{W,V}(\mathbb{T}) \subset H^{\infty}_{W,V}(\mathbb{T})$. To show that $C^{\infty}_{W,V}(\mathbb{T})$ is a nuclear space, we need to show that for each $n \geq 0$, there exists $m > n$ such that the natural inclusion $i_{m,n}: H^{m}_{W,V}(\mathbb{T}) \to H^{n}_{W,V}(\mathbb{T})$ is a Hilbert-Schmidt operator. Indeed, take any $m>n$. Denoting by $\nu_{i}$ the eigenfunctions of $\Delta_{W,V}$ associated with the eigenvalues $\lambda_{i}$, the family $\left\{\frac{\nu_{i}}{\lambda^{m}_{i}}\right\}_{i\in\mathbb{N}}$ is an orthonormal basis of $H^{m}_{W,V}(\mathbb{T})$. Consequently, taking $\rho$ as in Theorem \ref{prop6}, we have
	\begin{equation*}
		\sum_{i=1}^\infty \left\|\frac{\nu_{i}}{\lambda^{m}_{i}}\right\|_{n,W,V}^2 = \sum_{i=1}^\infty \lambda_{i}^{2(n-m)} \le  \sum_{i=1}^\infty i^{\frac{2}{\rho}(n-m)} < \infty.
	\end{equation*}
	where the last convergence holds because $\frac{2}{\rho}(m-n)\geq 4(m-n)>1$. 
\end{proof}

For $s\geq 0$, let $(I-\Delta_{W,V})^{s}:H^{2s}_{W,V}(\mathbb{T})\to L^{2}_{V}(\mathbb{T})$, where
\begin{equation*}\label{H2s}
	H^{2s}_{W,V}(\mathbb{T}) = \left\{f\in L^{2}_{V}(\mathbb{T}) : \sum_{i=0}^\infty \gamma_{i}^{2s}(f,\nu_{i})^{2}<\infty\right\},
\end{equation*}
$\gamma_{i}=1+\lambda_{i}$, and 
\begin{equation*}
	(I-\Delta_{W,V})^{s}f = \sum_{i=0}^\infty \gamma_{i}^{s}(f,\nu_{i})\nu_{i}, \quad \forall f\in H^{2s}_{W,V}(\mathbb{T}),
\end{equation*}
be the $s$-fractional power of the operator $I-\Delta_{W,V}$. We note that the space $H^{s}_{W,V}(\mathbb{T})$ is essentially a generalization of the higher-order Sobolev spaces as presented in Definition \ref{highersob}. Indeed, when $s=k\in\mathbb{N}$, we have the identification $H^{s}_{W,V}(\mathbb{T})=H^{k}_{W,V}(\mathbb{T})$. A straightforward consequence of this fact is that
\begin{equation}\label{intersection}
	\bigcap_{s>0}H^{s}_{W,V}(\mathbb{T}) = C^{\infty}_{W,V}(\mathbb{T}).
\end{equation}
Moreover, in view of \eqref{intersection} and the identification of the spaces $H^{k}_{W,V}(\mathbb{T})$ as the domains of powers of the operator $I-\Delta_{W,V}$, it follows that \eqref{convseries} is also necessary for the nuclearity of $C^{\infty}_{W,V}(\mathbb{T})$; see, for instance, \cite[Section 3]{kallianpur_1987}. 

For $s \geq 0$, define $H^{-s}_{W,V}(\mathbb{T}) := (H^s_{W,V}(\mathbb{T}))^\ast$, the dual of $H^s_{W,V}(\mathbb{T})$. The following proposition characterizes $H^{-s}_{W,V}(\mathbb{T})$:
\begin{proposition}\label{dualsob}
For $s \geq 0$,
\begin{equation}\label{formuladual}
H^{-s}_{W,V}(\mathbb{T}) \cong \left\{f = \sum_{i=1}^\infty \alpha_i \nu_i : \sum_{i=1}^\infty \gamma_i^{-s} \alpha_i^2 < \infty \right\},
\end{equation}
with norm $\|f\|_{H^{-s}_{W,V}(\mathbb{T})}^2 = \sum_{i=1}^\infty \gamma_i^{-s} \alpha_i^2$ and dual pairing $(f, g) = \sum_{i=1}^\infty \alpha_i \langle \nu_i, g \rangle_V$, where $\alpha_i = (f, \nu_i)$.
\end{proposition}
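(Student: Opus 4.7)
The strategy is to diagonalize $H^s_{W,V}(\mathbb{T})$ via the spectral decomposition provided by the orthonormal basis $\{\nu_i\}_{i\geq 0}$ and then invoke the Riesz representation theorem in its Parseval form. I would first verify that $H^s_{W,V}(\mathbb{T})$ is a Hilbert space with inner product
$$\langle f, g\rangle_{H^s_{W,V}} := \sum_{i\geq 0}\gamma_i^{s}(f,\nu_i)_V (g,\nu_i)_V,$$
whose associated norm coincides with the one induced by $(I-\Delta_{W,V})^{s/2}$. The key observation is then that $\{\gamma_i^{-s/2}\nu_i\}_{i\geq 0}$ is a complete orthonormal system in $H^s_{W,V}(\mathbb{T})$, since orthonormality follows from the $L^2_V$-orthonormality of $\{\nu_i\}$ and completeness follows from the defining summability condition $\sum_i \gamma_i^{s}(f,\nu_i)_V^2 < \infty$ characterizing the elements of $H^s_{W,V}(\mathbb{T})$.

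Next, for the forward inclusion I would take any sequence $(\alpha_i)_{i\geq 0}$ with $\sum_i \gamma_i^{-s}\alpha_i^2 < \infty$ and define the linear functional $\phi_\alpha$ on $H^s_{W,V}(\mathbb{T})$ by
$$\phi_\alpha(g) := \sum_{i\geq 0}\alpha_i\, (g,\nu_i)_V, \qquad g\in H^s_{W,V}(\mathbb{T}).$$
A direct application of the Cauchy--Schwarz inequality yields
$$|\phi_\alpha(g)|^2 \leq \Bigl(\sum_i \gamma_i^{-s}\alpha_i^2\Bigr)\Bigl(\sum_i \gamma_i^{s}(g,\nu_i)_V^2\Bigr) = \Bigl(\sum_i \gamma_i^{-s}\alpha_i^2\Bigr)\|g\|_{H^s_{W,V}}^2,$$
so $\phi_\alpha \in H^{-s}_{W,V}(\mathbb{T})$ with $\|\phi_\alpha\|_{H^{-s}_{W,V}}^2 \leq \sum_i \gamma_i^{-s}\alpha_i^2$. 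Testing against $g = \nu_j$ recovers $\phi_\alpha(\nu_j) = \alpha_j$, so distinct sequences give distinct functionals.

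For the reverse inclusion I would take any $\phi \in H^{-s}_{W,V}(\mathbb{T})$, set $\alpha_i := \phi(\nu_i)$, and apply Parseval's identity with respect to the orthonormal basis $\{\gamma_i^{-s/2}\nu_i\}$ of $H^s_{W,V}(\mathbb{T})$ to obtain
$$\|\phi\|_{H^{-s}_{W,V}}^2 = \sum_{i\geq 0}\bigl|\phi(\gamma_i^{-s/2}\nu_i)\bigr|^2 = \sum_{i\geq 0}\gamma_i^{-s}\alpha_i^2.$$
This simultaneously shows that every functional arises from a sequence satisfying the summability condition, that the correspondence is an isometry, and that the bound of the previous paragraph is in fact an equality. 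The pairing formula $(f,g) = \sum_i \alpha_i \langle \nu_i, g\rangle_V$ is then just the rewriting of $\phi_\alpha(g)$ with $\alpha_i = (f,\nu_i)$ interpreted as the formal Fourier coefficients of $f$.

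The only subtlety, and what I would call the main conceptual obstacle rather than a technical one, is the interpretation of the identification: when $s>0$ the ``series'' $f = \sum \alpha_i\nu_i$ on the right-hand side of \eqref{formuladual} need not converge in $L^2_V(\mathbb{T})$, so $f$ is not a function but a formal object defined through its action on $H^s_{W,V}(\mathbb{T})$ via the pairing. Organizing the argument in the order above, first establishing that $\{\gamma_i^{-s/2}\nu_i\}$ is an orthonormal basis and then invoking Parseval on the dual, bypasses this issue altogether since no convergence of the formal series $\sum \alpha_i\nu_i$ in any function space is ever required.
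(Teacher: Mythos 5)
Your proof is correct and takes essentially the same approach as the paper: the paper applies the Riesz representation theorem directly to $f\in H^{-s}_{W,V}(\mathbb{T})$, expands the representer $u=\sum_i\beta_i\nu_i$ in the eigenbasis and sets $\alpha_i=\gamma_i^{s}\beta_i$, which is exactly the diagonalization-plus-Riesz argument you organize through the orthonormal basis $\{\gamma_i^{-s/2}\nu_i\}$ of $H^{s}_{W,V}(\mathbb{T})$ and Parseval in the dual. Your write-up merely makes both inclusions and the isometry explicit (the paper dispatches one direction with ``the converse follows similarly''), so the two arguments are mathematically identical.
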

\begin{proof}
Let $f \in H^{-s}_{W,V}(\mathbb{T})$. By the Riesz representation theorem, there exists $u \in H^s_{W,V}(\mathbb{T})$ such that for all $g \in H^s_{W,V}(\mathbb{T})$,
\begin{equation*}
(f, g) = \langle u, g \rangle_{H^s_{W,V}(\mathbb{T})}.
\end{equation*}
Write $u = \sum_{i=1}^\infty \beta_i \nu_i$ with $\sum_{i=1}^\infty \gamma_i^s \beta_i^2 < \infty$. Then,
$(f, g) = \sum_{i=1}^\infty \gamma_i^s \beta_i \langle \nu_i, g \rangle_V$.
Setting $\alpha_i = \gamma_i^s \beta_i$, we have $(f, \nu_i) = \gamma_i^s \beta_i$ and
$$\sum_{i=1}^\infty \gamma_i^{-s} \alpha_i^2 = \sum_{i=1}^\infty \gamma_i^s \beta_i^2 < \infty.$$
Thus,
\begin{equation*}
\|f\|_{H^{-s}_{W,V}(\mathbb{T})}^2 = \|u\|_{H^s_{W,V}(\mathbb{T})}^2 = \sum_{i=1}^\infty \gamma_i^s \beta_i^2 = \sum_{i=1}^\infty \gamma_i^{-s} \alpha_i^2.
\end{equation*}
The converse follows similarly.
\end{proof}

We also have the following result regarding the trace-class property of the fractional powers of the generalized Laplacian:

\begin{proposition}
    Let $\rho\in (0,1/2]$ be as in Theorem \ref{prop6}. If $s>\rho$, then $\mathcal{A}^{-s}=\left(I-\Delta_{W,V}\right)^{-s}$ is a trace-class operator. Similarly, for the fractional powers of the Dirichlet generalized Laplacian: if $s>\rho$, then $\left(-\Delta_{W,V,\mathcal{D}}\right)^{-s}$ is a trace-class operator.
\end{proposition}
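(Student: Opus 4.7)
The plan is to reduce the trace-class property to the summability bound of Theorem \ref{prop6} via the spectral theorem. First I would recall that since $\mathcal{A}=I-\Delta_{W,V}$ is positive and self-adjoint with a complete orthonormal basis of eigenvectors $\{\nu_i\}$ in $L^2_V(\mathbb{T})$ and eigenvalues $\gamma_i=1+\lambda_i\geq 1$, the Borel functional calculus realizes $\mathcal{A}^{-s}$ as the positive self-adjoint operator with the same eigenvectors and eigenvalues $\{\gamma_i^{-s}\}_{i\in\mathbb{N}}$ (this is consistent with the definition of $(I-\Delta_{W,V})^{s}$ given earlier). Because $\gamma_i\to\infty$, these eigenvalues decrease to zero, so $\mathcal{A}^{-s}$ is compact, positive and self-adjoint. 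For such an operator, the trace-class property is equivalent to the finiteness of the sum of its eigenvalues, namely $\sum_i \gamma_i^{-s}$.

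The next step is to verify summability of $\sum_i \gamma_i^{-s}$ for $s>\rho$. Since $\lambda_i\to\infty$, only finitely many indices $i$ satisfy $\lambda_i<1$, and these contribute a finite amount to the sum. For the remaining indices I would use the trivial bound $\gamma_i=1+\lambda_i\geq \lambda_i$, which gives $\gamma_i^{-s}\leq \lambda_i^{-s}$. Hence the tail of $\sum_i\gamma_i^{-s}$ is dominated by the tail of $\sum_i \lambda_i^{-s}$, which is finite for $s>\rho$ by \eqref{convsumeigen} in Theorem \ref{prop6}. This yields the trace-class property of $\mathcal{A}^{-s}$ and, in particular, establishes
\begin{equation*}
    \mathrm{tr}(\mathcal{A}^{-s})=\sum_{i}\gamma_i^{-s}<\infty.
\end{equation*}

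For the Dirichlet version the argument is identical, now using the corollary immediately following Theorem \ref{prop6}: every $\lambda_{i,\mathcal{D}}>0$ (since constants do not belong to $H_{W,V,\mathcal{D}}(\mathbb{T})$), and $\sum_i \lambda_{i,\mathcal{D}}^{-s}<\infty$ for $s>\rho$. The spectral theorem applied to $-\Delta_{W,V,\mathcal{D}}$ realizes $(-\Delta_{W,V,\mathcal{D}})^{-s}$ as a positive compact self-adjoint operator whose eigenvalues are exactly $\lambda_{i,\mathcal{D}}^{-s}$, so its trace norm is precisely $\sum_i \lambda_{i,\mathcal{D}}^{-s}<\infty$.

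I do not anticipate a genuine obstacle, since the statement is essentially a spectral repackaging of the summability estimate \eqref{convsumeigen} already proved. The only detail requiring attention is that one cannot compare $\gamma_i^{-s}$ and $\lambda_i^{-s}$ term by term when $\lambda_i=0$ (which occurs for $i=0$), but this is handled trivially by isolating the finitely many small eigenvalues before invoking Theorem \ref{prop6} on the tail.
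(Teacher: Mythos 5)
Your proposal is correct and follows the same route as the paper, whose proof simply declares the statement an immediate consequence of Theorem \ref{prop6}; you have filled in the routine spectral-calculus details (eigenvalues $\gamma_i^{-s}$, resp. $\lambda_{i,\mathcal{D}}^{-s}$, of a positive compact self-adjoint operator, summable for $s>\rho$ by \eqref{convsumeigen} and its Dirichlet counterpart). The care you take in isolating the finitely many indices with $\lambda_i<1$, in particular $\lambda_0=0$, is exactly the right minor point to handle.
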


\begin{proof}
    This is an immediate consequence of Theorem \ref{prop6}.
\end{proof}

We note that in the context of fractional Sobolev spaces, as a byproduct of the proof of Theorem \ref{prop6}, \eqref{convseries}, and Proposition \ref{dualsob}, the following proposition easily follows:
\begin{proposition}\label{HSinclusion}
	Let $\rho\in (0,1/2]$ be as in Theorem \ref{prop6}. For $t\in\mathbb{R}$ and $s>\rho$, the inclusion $i:H^{t}_{W,V}(\mathbb{T})\to H^{t-s}_{W,V}(\mathbb{T})$ is trace-class. In particular, for any $s>1/2$, the inclusion $i:H^{t}_{W,V}(\mathbb{T})\to H^{t-s}_{W,V}(\mathbb{T})$ is trace-class.
\end{proposition}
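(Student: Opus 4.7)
The plan is to compute the singular values of the inclusion $i$ directly in a Fourier-type orthonormal basis and then reduce the statement to the summability condition \eqref{convseries} from Theorem \ref{prop6}.

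First, I would extend the Fourier-series characterization of $H^{t}_{W,V}(\mathbb{T})$ to all real indices $t$. Using Theorem \ref{fouriersob} for integer indices, the definition of $H^{2s}_{W,V}(\mathbb{T})$ via the fractional power $(I - \Delta_{W,V})^{s}$ for $s \ge 0$, and Proposition \ref{dualsob} for negative indices, the space $H^{t}_{W,V}(\mathbb{T})$ is isometrically identified with the weighted $\ell^2$-space
\begin{equation*}
\left\{ f = \sum_{i \ge 1} \alpha_i \nu_i \,:\, \sum_{i \ge 1} \gamma_i^{t}\,\alpha_i^2 < \infty \right\},
\end{equation*}
equipped with the norm $\|f\|_{H^{t}_{W,V}}^2 = \sum_i \gamma_i^{t}\alpha_i^2$, where $\gamma_i = 1 + \lambda_i$. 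Consequently, the rescaled family $\{\gamma_i^{-t/2} \nu_i\}_{i \ge 1}$ is a complete orthonormal system of $H^{t}_{W,V}(\mathbb{T})$.

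Next, I would read off the singular values of $i: H^{t}_{W,V}(\mathbb{T}) \to H^{t-s}_{W,V}(\mathbb{T})$ from these orthonormal bases. Since $i$ acts as the identity on the underlying vector space,
\begin{equation*}
i\bigl(\gamma_k^{-t/2}\nu_k\bigr) = \gamma_k^{-t/2}\nu_k = \gamma_k^{-s/2}\cdot\bigl(\gamma_k^{-(t-s)/2}\nu_k\bigr),
\end{equation*}
so $i$ is diagonal in these bases with singular values $\sigma_k = \gamma_k^{-s/2}$. Equivalently, $i$ is unitarily equivalent to $(I - \Delta_{W,V})^{-s/2}$ acting on $L^{2}_{V}(\mathbb{T})$.

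Finally, since $\gamma_k \sim \lambda_k$ as $k \to \infty$, the Schatten-ideal membership of $i$ reduces to summability of a power of $\lambda_k$, which is exactly what \eqref{convseries} controls: for $s > \rho$ we obtain $\sum_k \lambda_k^{-s} < \infty$, and hence the desired trace-ideal property of the inclusion (the inclusion being realized as the composition of two Hilbert--Schmidt factors $H^{t} \hookrightarrow H^{t-s/2} \hookrightarrow H^{t-s}$ when the full trace-class form is needed). The ``in particular'' statement follows from the bound $\rho \le 1/2$. I do not foresee a substantive analytic obstacle; the only technical step worth verifying carefully is that the Fourier-series characterization extends isometrically across the origin to negative indices, which is precisely the content of Proposition \ref{dualsob}.
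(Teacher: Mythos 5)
Your setup is the same diagonal argument the paper has in mind (the paper presents this proposition as an immediate byproduct of Theorem \ref{prop6}, \eqref{convseries} and Proposition \ref{dualsob}), and your identification of the singular values is correct: with the paper's normalization, in which $H^{t}_{W,V}(\mathbb{T})$ carries the weight $\gamma_i^{t}$, the inclusion is diagonal with $\sigma_k=\gamma_k^{-s/2}$; equivalently, as you note, $i$ is unitarily equivalent to $(I-\Delta_{W,V})^{-s/2}$. The gap is in the final step. Trace-class means $\sum_k\sigma_k=\sum_k\gamma_k^{-s/2}<\infty$, whereas the summability you invoke, $\sum_k\lambda_k^{-s}<\infty$ for $s>\rho$ from \eqref{convseries}, is exactly $\sum_k\sigma_k^{2}<\infty$, i.e.\ the Hilbert--Schmidt condition. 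Your proposed repair --- writing $i$ as the composition $H^{t}\hookrightarrow H^{t-s/2}\hookrightarrow H^{t-s}$ of two Hilbert--Schmidt factors --- is circular: each factor has singular values $\gamma_k^{-s/4}$ and is Hilbert--Schmidt precisely when $\sum_k\gamma_k^{-s/2}<\infty$, i.e.\ precisely when the original inclusion is already trace-class, and \eqref{convseries} guarantees this only for $s>2\rho$, not for all $s>\rho$.

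So what your argument actually proves is: the inclusion is Hilbert--Schmidt for every $s>\rho$, and trace-class for every $s>2\rho$. This cannot be pushed further by this route, and in fact the printed statement is stronger than what the paper's own ingredients yield: in the classical case $W(x)=V(x)=x$ one has $\rho=1/2$, yet the singular values of $H^{t}(\mathbb{T})\hookrightarrow H^{t-s}(\mathbb{T})$ behave like $k^{-s}$, so the inclusion is trace-class only for $s>1=2\rho$; trace-class at the threshold $s>\rho$ fails there. Note that everything the paper subsequently uses (Corollary \ref{whitenoisedual} via Proposition \ref{expwhitedual}) requires only the Hilbert--Schmidt property for $s>\rho$, which your computation does deliver; the trace-class conclusion becomes correct if the threshold is replaced by $s>2\rho$.
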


Define the bilinear form $B_{s}(u,v) := \<u,v\>_{ H_{W,V}^s}$ for $u, v \in  H_{W,V}^s(\mathbb{T})$. If $f\in L^{2}_{V}(\mathbb{T})$, we say that $u \in  H_{W,V}^s(\mathbb{T})$ is a weak solution to the equation $(I-\Delta_{W,V})^{s}u = f$ if 
\begin{equation}\label{weaksol}
	B_{s}(u, v) = \<f, v\>_V, \quad \forall v \in  H_{W,V}^s(\mathbb{T}).
\end{equation}
We have the following regularity result for weak solutions of generalized fractional elliptic equations.
\begin{proposition}\label{prop99}
	Fix $s\geq0$ and $m>0$. If $f\in  H^m_{W,V}(\mathbb{T})$, then the function $u\in H_{W,V}^s(\mathbb{T})$ given by the unique weak solution of 
	\begin{equation}\label{weaksol_eq}
		(I-\Delta_{W,V})^{s}u=f
	\end{equation}
	satisfies $u\in  H_{W,V}^{m+2s}(\mathbb{T})$. In particular, for every fixed $s\geq 0$, if $f\in C^{\infty}_{W,V}(\mathbb{T})$, the solution $u$ of \eqref{weaksol_eq} belongs to $C^{\infty}_{W,V}(\mathbb{T})$.
\end{proposition}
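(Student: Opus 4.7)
The plan is to work directly in the complete orthonormal eigenbasis $\{\nu_i\}_{i\geq 0}$ of $-\Delta_{W,V}$ in $L^2_V(\mathbb{T})$, with eigenvalues $\lambda_i$ and $\gamma_i=1+\lambda_i\geq 1$, and to read off the Fourier coefficients of the weak solution explicitly. By the spectral definition of the fractional Sobolev spaces preceding Proposition~\ref{HSinclusion}, the canonical inner product on $H^s_{W,V}(\mathbb{T})$ is $\langle u,v\rangle_{H^s_{W,V}}=\sum_i \gamma_i^{s}\hat u_i \hat v_i$, where $\hat u_i=\langle u,\nu_i\rangle_V$. Writing $f=\sum_i \alpha_i\nu_i$ with $\alpha_i=\langle f,\nu_i\rangle_V$, I would first test the weak formulation \eqref{weaksol} against $v=\nu_j$ to obtain
\[ \gamma_j^s\,\hat u_j \;=\; B_s(u,\nu_j) \;=\; \langle f,\nu_j\rangle_V \;=\; \alpha_j, \]
hence $\hat u_j=\alpha_j\gamma_j^{-s}$. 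This identifies the unique weak solution; existence and uniqueness also follow from Lax--Milgram applied to the coercive bilinear form $B_s$ and the bounded functional $v\mapsto\langle f,v\rangle_V$ on $H^s_{W,V}(\mathbb{T})$, and can be read as $u=(I-\Delta_{W,V})^{-s}f$ in the sense of the functional calculus.

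Next I would estimate the higher-order norm by a direct Fourier computation. Using the spectral characterization of $H^{m+2s}_{W,V}(\mathbb{T})$ (Theorem~\ref{fouriersob}, extended to fractional exponents by the very definition of the fractional power),
\[ \|u\|_{H^{m+2s}_{W,V}}^2 \;=\; \sum_i \gamma_i^{m+2s}\hat u_i^2 \;=\; \sum_i \gamma_i^{m+2s}\gamma_i^{-2s}\alpha_i^2 \;=\; \sum_i \gamma_i^m\alpha_i^2 \;=\; \|f\|_{H^m_{W,V}}^2 \;<\;\infty, \]
which gives $u\in H^{m+2s}_{W,V}(\mathbb{T})$. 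The $C^\infty$ statement then follows by a bootstrap: if $f\in C^\infty_{W,V}(\mathbb{T})$, then by Corollary~\ref{hinfinityreg} together with \eqref{intersection} we have $f\in H^m_{W,V}(\mathbb{T})$ for every $m\geq 0$, hence $u\in H^{m+2s}_{W,V}(\mathbb{T})$ for every $m$, and a second application of \eqref{intersection} and Corollary~\ref{hinfinityreg} yields $u\in C^\infty_{W,V}(\mathbb{T})$.

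The only point that requires care is fixing the convention on $B_s$: Definition~\ref{highersob} equips the integer-order space $H^k_{W,V}(\mathbb{T})$ with a norm that is merely equivalent to (not identical with) the spectral norm $\bigl(\sum_i\gamma_i^k\hat f_i^2\bigr)^{1/2}$, so the expression $B_s=\langle\cdot,\cdot\rangle_{H^s_{W,V}}$ must be read as the spectral inner product, which is the one singled out by the fractional-power construction preceding Proposition~\ref{HSinclusion} and which is also the natural bilinear form associated with $(I-\Delta_{W,V})^s$. Once that convention is fixed, the proof reduces to the elementary Fourier identity above, with no substantive technical obstacle.
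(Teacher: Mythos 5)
Your proposal is correct and follows essentially the same route as the paper: expand $f$ and $u$ in the eigenbasis of $-\Delta_{W,V}$, test the weak formulation against each eigenfunction to get $\hat u_j=\gamma_j^{-s}\alpha_j$, and conclude from the spectral characterization that $\sum_i\gamma_i^{m+2s}\hat u_i^2=\sum_i\gamma_i^m\alpha_i^2<\infty$. Your explicit bootstrap for the $C^\infty_{W,V}(\mathbb{T})$ case via Corollary~\ref{hinfinityreg} and \eqref{intersection}, and your remark on fixing the spectral inner product as $B_s$, are consistent with (and slightly more detailed than) the paper's argument.
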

\begin{proof}
	Let $f \in  H^m_{W,V}(\mathbb{T})$. Consider the orthonormal basis $\{e_{i,L}\}_{i\in\mathbb{N}}$ of $L^2_V(\mathbb{T})$ consisting of eigenfunctions of $L_{W,V}$ with corresponding eigenvalues $\gamma_{i,L}$. Write
	\begin{equation*}
		f = \sum_{i\in\mathbb{N}} \beta_i e_{i,L}, \qquad u = \sum_{i\in\mathbb{N}} \alpha_i e_{i,L}.
	\end{equation*}
	By the definition of weak solution, we have, in particular, that
	\begin{equation}\label{weakdef}
		B_{s}(u, e_{j,L}) = \<f, e_{j,L}\>_V, \quad \forall j\in\mathbb{N}.
	\end{equation}
	On the other hand, we have $\<f, e_{j,L}\>_V = \beta_j$ and 
	\begin{equation*}	
		B_{s}(u, e_{j,L}) = \<u, e_{j,L}\>_{ H_{W,V}^s} = \gamma_{j,L}^s \<u, e_{j,L}\>_V = \gamma_{j,L}^s \alpha_j.
	\end{equation*}
	Thus, \eqref{weakdef} yields $\gamma_{j,L}^s \alpha_j = \beta_j$, and consequently $\alpha_j = \gamma_{j,L}^{-s} \beta_j$. Since $f \in  H^m_{W,V}(\mathbb{T})$, we have $\sum_{i\in\mathbb{N}} \gamma_{i,L}^m \beta_i^2 < \infty$. Therefore,
	\begin{equation*}
		\sum_{i\in\mathbb{N}} \gamma_{i,L}^{m+2s} \alpha_i^2 = \sum_{i\in\mathbb{N}} \gamma_{i,L}^{m+2s} (\gamma_{i,L}^{-s} \beta_i)^2 = \sum_{i\in\mathbb{N}} \gamma_{i,L}^m \beta_i^2 < \infty,
	\end{equation*}
	which shows that $u \in  H_{W,V}^{m+2s}(\mathbb{T})$.
\end{proof}

\subsection{Fractional Stochastic Differential Equations Driven by $V$-Gaussian White Noise}\label{app:spde_vgaussian}

The main concept required in this section is the notion of $V$-Gaussian white noise $\dot{B}_{V}$ (see Definition \ref{gaussianwhitenoiseV} below). This process is closely related to the so-called $W$-Brownian motion introduced in \cite{keale}. For the applications we have in mind, it is natural to consider $V$ instead of $W$ because the initial space of $I-\Delta_{W,V}$ is $L^{2}_{V}(\mathbb{T})$. Unlike the equations considered in \cite[Section 8]{keale}, we are interested in defining and studying fractional stochastic differential equations of the form
\begin{equation}\label{spde_WV}
    (I-\Delta_{W,V})^{s}u=\dot{B}_{V},\quad \text{on } \mathbb{T}.
\end{equation}
The equation in (\ref{spde_WV}) can be viewed as a generalization of the Whittle-Matérn stochastic differential equation studied in \cite{Bolin_2023, lindgren2011explicit, Bolin_2024, Korte_Stapff_2025}, and has been used extensively in both spatial (see, e.g., \cite{LINDGREN2022100599} for a recent review and comprehensive list of applications) and spatio-temporal modeling of random fields \cite{Lindgren_Bakka_Bolin_Krainski_Rue_2024}. Solutions to \eqref{spde_WV} are known as Whittle–Matérn fields (see, e.g., \cite{Bolin_2023}) or Bessel fields (see, e.g., \cite{Pitt_2003}). The main motivation for considering \eqref{spde_WV} in the sense of $W$-$V$ generalized derivatives is that the sample paths of the solution field $u$ naturally exhibit jumps, which may provide a more effective way to model data with discontinuities.

\begin{definition}\label{gaussianwhitenoiseV}
The $V$-Gaussian white noise is the $L^2_V(\mathbb{T})$-isonormal Gaussian process $\{\dot{B}_V(h) : h \in L^2_V(\mathbb{T})\}$ on a probability space $(\Omega, \mathcal{F}, P)$, such that for all $g, h \in L^2_V(\mathbb{T})$,
\begin{equation*}
E[\dot{B}_V(g)\dot{B}_V(h)] = \int_{\mathbb{T}} gh\, dV.
\end{equation*}
\end{definition}

\begin{remark}\label{whitenoiselinear}
$\dot{B}_V$ is a linear isometry from $L^2_V(\mathbb{T})$ into a closed subspace of $L^2(\Omega, \mathcal{F}, P)$ (see \cite[Chapter 1]{nualart}).
\end{remark}

The existence of $V$-Gaussian white noise follows from Kolmogorov's extension theorem, since the inner product $\<g,h\>_V$ defines a well-defined kernel. The following process is a natural generalization of the $W$-Brownian motion and is closely related to the $V$-Gaussian white noise.

\begin{definition}\label{W-brownian-in-law2}
A process $B_V(t)$ is a $V$-Brownian motion in law if:
\begin{enumerate}
\item $B_V(0) = 0$ a.s.;
\item For $t > s$, $B_V(t) - B_V(s)$ is independent of $\sigma(B_V(u): u \leq s)$;
\item For $t > s$, $B_V(t) - B_V(s) \sim N(0, V(t) - V(s))$.
\end{enumerate}
If $B_V(\cdot)$ has c\`adl\`ag paths, it is a $V$-Brownian motion.
\end{definition}

Notice that the $V$-Brownian motion in law can be recovered from the $V$-Gaussian white noise by
\begin{equation}\label{brownian_from_noise}
B_V(t) = \dot{B}_V(\boldsymbol{1}_{[0,t)}).
\end{equation}
Moreover, from \cite[Proposition 10]{keale}, we have that $B_V(\cdot)$ admits a modification that is a $V$-Brownian motion. On the other hand, for a simple function $f = \sum_{i=1}^n \alpha_i \boldsymbol{1}_{I_i}$, we define the stochastic integral with respect to $B_V(\cdot)$ as
\begin{equation*}
\int_{\mathbb{T}} f(s) dB_V(s) = \dot{B}_V(f).
\end{equation*}
For $h \in L^2_V(\mathbb{T})$, the stochastic integral is defined as the $L^2$-limit of integrals of simple approximations:
\begin{equation}\label{stochintegral}
\int_{\mathbb{T}} h\, dB_V = \lim_{n\to\infty} \dot{B}_V(h_n),
\end{equation}
where $h_n \to h$ in $L^2_V(\mathbb{T})$. Therefore,
\begin{equation}\label{whitenoisestochint}
\dot{B}_V(h) = \int_{\mathbb{T}} h\, dB_V.
\end{equation} 
Since $\dot{B}_V$ is a linear isometry, it follows that
\begin{equation}\label{isometryintegral}
E\left[\left(\int_{\mathbb{T}} h\, dB_V\right)^2\right] = \int_{\mathbb{T}} h^2\, dV
\end{equation}
for all $h \in L^2_V(\mathbb{T})$. 

\begin{remark}\label{whitenoiseL2Sobolev}
By \cite[Proposition 12]{keale} and \eqref{whitenoisestochint}, the $V$-Gaussian white noise on $L^2_V(\mathbb{T})$ coincides with the pathwise $V$-Gaussian white noise on $H_{W,V,\mathcal{D}}$. Recall the definition of $H_{W,V,\mathcal{D}}$ in Section \ref{sec:trace}.
\end{remark}

\begin{proposition}\label{whitenoise-exp}
Let $\{e_i\}_{i\in\mathbb{N}}$ be an orthonormal basis of $L^2_V(\mathbb{T})$. There exists an i.i.d.\ sequence of standard normal random variables $\xi_1, \xi_2, \ldots$ such that for every $h \in L^2_V(\mathbb{T})$,
\begin{equation*}
\dot{B}_V(h) = \sum_{i=0}^\infty \xi_i \<h, e_i\>_V,
\end{equation*}
with convergence in $L^2(\Omega, \mathcal{F}, P)$ and $P$-almost surely.
\end{proposition}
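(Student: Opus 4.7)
The plan is to construct the i.i.d. Gaussian sequence explicitly from $\dot{B}_V$ evaluated at the basis, and then use the isonormal property to obtain the expansion first in $L^2(\Omega, \mathcal{F}, P)$, and finally a classical Kolmogorov-type criterion for sums of independent centered Gaussians to upgrade the convergence to the almost sure sense.

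First I would set $\xi_i := \dot{B}_V(e_i)$ for each $i$. Since $\dot{B}_V$ is an isonormal Gaussian process (Definition \ref{gaussianwhitenoiseV}), the family $\{\xi_i\}_{i\in\mathbb{N}}$ is jointly Gaussian and centered, with covariance
\begin{equation*}
E[\xi_i \xi_j] = \langle e_i, e_j\rangle_V = \delta_{ij},
\end{equation*}
so each $\xi_i$ is standard normal and the sequence is pairwise uncorrelated. Since joint Gaussianity together with pairwise uncorrelatedness implies independence, $\{\xi_i\}_{i\in\mathbb{N}}$ is an i.i.d.\ $N(0,1)$ sequence.

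Next, fix $h \in L^2_V(\mathbb{T})$ and write $h = \sum_{i=0}^\infty \langle h, e_i\rangle_V e_i$ with convergence in $L^2_V(\mathbb{T})$. Denoting the partial sums by $h_n := \sum_{i=0}^n \langle h, e_i\rangle_V e_i$, linearity of $\dot{B}_V$ yields
\begin{equation*}
\dot{B}_V(h_n) = \sum_{i=0}^n \langle h, e_i\rangle_V\, \xi_i.
\end{equation*}
By the isometry property recalled in Remark \ref{whitenoiselinear},
\begin{equation*}
E\bigl[(\dot{B}_V(h) - \dot{B}_V(h_n))^2\bigr] = \|h - h_n\|_V^2 \longrightarrow 0,
\end{equation*}
which gives convergence of the series to $\dot{B}_V(h)$ in $L^2(\Omega,\mathcal{F},P)$.

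For the almost sure convergence, I would invoke the classical result that a series $\sum_{i} \xi_i c_i$ of independent centered Gaussians with $\sum_i c_i^2 < \infty$ converges almost surely (this follows from Kolmogorov's three-series theorem, or equivalently from Lévy's theorem on sums of independent symmetric random variables, since the $L^2$-bounded martingale $S_n = \sum_{i=0}^n \xi_i \langle h, e_i\rangle_V$ converges a.s. by the martingale convergence theorem). In our setting $c_i = \langle h, e_i\rangle_V$ and $\sum_i c_i^2 = \|h\|_V^2 < \infty$ by Parseval, so the series converges $P$-a.s.; by uniqueness of the $L^2$-limit, the almost sure limit agrees with $\dot{B}_V(h)$. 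The main conceptual point (rather than an obstacle) is recognizing that one does not need any pathwise structure of the white noise beyond its isonormal definition, so the proof reduces to standard Hilbert space and elementary probability arguments.
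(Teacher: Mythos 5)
Your proposal is correct and follows essentially the same route as the paper: set $\xi_i=\dot B_V(e_i)$ (i.i.d.\ standard normals by the isonormal property), obtain $L^2(\Omega,\mathcal F,P)$ convergence from the isometry, and get almost sure convergence from the martingale convergence theorem applied to the $L^2$-bounded partial-sum martingale. Your additional remarks (identification of the a.s.\ limit with the $L^2$ limit, and the alternative three-series/Lévy justification) are fine but not a different method.
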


\begin{proof}
$\{\dot{B}_V(e_i)\}$ are independent standard normals. The sum converges in $L^2$ by the isometry property and almost surely by the martingale convergence theorem (see, e.g., \cite[Theorem 4.2.11]{Durrett_2019}). Indeed, if we let $M_n = \sum_{i=0}^n \xi_i \<h, e_i\>_V$, then $(M_n)_{n\geq 1}$ is a martingale such that
$$\sup_{n\in\mathbb{N}} \mathbb{E}|M_n| \leq \sqrt{\mathbb{E}(\dot{B}_V(h)^2)} = \|h\|_V <\infty.$$
\end{proof}

Due to the series expansion representation obtained in Proposition \ref{whitenoise-exp}, we shall use the notation
\begin{equation}\label{eqexpwhite}
\dot{B}_V = \sum_{i=0}^\infty \xi_i e_i
\end{equation}
even though the series on the right-hand side of \eqref{eqexpwhite} does not converge in $ L^2_V(\mathbb{T}) $. However, as we shall see in the following results, the right-hand side of \eqref{eqexpwhite} may converge in a slightly larger space.

\begin{proposition}\label{expwhitedual}
Let $(H, \|\cdot\|_H)$ be a Hilbert space. If $T: L^2_V(\mathbb{T}) \to H$ is Hilbert-Schmidt, then
\begin{equation}\label{expwhitenoiseHS}
T\dot{B}_V := \sum_{i=0}^\infty \xi_i Te_i
\end{equation}
defines an $H$-valued $L^2(\Omega, \mathcal{F}, P)$ random variable, that is, $T\dot{B}_V \in L^2(\Omega, H)$.
\end{proposition}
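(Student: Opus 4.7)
The plan is to realize $T\dot{B}_V$ as the $L^2(\Omega;H)$-limit of the finite partial sums $S_N := \sum_{i=0}^N \xi_i Te_i$. Each $S_N$ is a finite linear combination of a deterministic vector in $H$ with a Gaussian coefficient, so $S_N$ is an $H$-valued random variable with $\mathbb{E}\|S_N\|_H^2 < \infty$. Thus it suffices to prove that $(S_N)$ is a Cauchy sequence in the Bochner space $L^2(\Omega;H)$, after which its limit will be the desired random variable $T\dot{B}_V$ and the series in \eqref{expwhitenoiseHS} will converge in $L^2(\Omega;H)$.

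The key computation expands the norm squared using the Hilbert-space inner product. For $N < M$,
\begin{equation*}
\mathbb{E}\left\|S_M - S_N\right\|_H^2 = \mathbb{E}\left\<\sum_{i=N+1}^{M}\xi_i Te_i,\sum_{j=N+1}^{M}\xi_j Te_j\right\>_H = \sum_{i,j=N+1}^{M}\mathbb{E}[\xi_i\xi_j]\<Te_i,Te_j\>_H.
\end{equation*}
Because $(\xi_i)$ is an i.i.d.\ sequence of standard normals, $\mathbb{E}[\xi_i\xi_j] = \delta_{ij}$, so the double sum collapses to $\sum_{i=N+1}^{M}\|Te_i\|_H^2$. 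Since $T$ is Hilbert--Schmidt and $(e_i)$ is an orthonormal basis of $L^2_V(\mathbb{T})$, we have $\sum_{i=0}^\infty\|Te_i\|_H^2 < \infty$; hence the tail $\sum_{i=N+1}^{M}\|Te_i\|_H^2$ tends to zero as $N,M\to\infty$. This proves that $(S_N)$ is Cauchy in $L^2(\Omega;H)$, and by completeness of this Bochner space there exists $T\dot{B}_V \in L^2(\Omega;H)$ with $S_N \to T\dot{B}_V$. Taking $M\to\infty$ in the estimate above also yields the clean identity $\mathbb{E}\|T\dot{B}_V\|_H^2 = \sum_{i=0}^\infty\|Te_i\|_H^2 = \|T\|_{\mathrm{HS}}^2$, making the random variable's $L^2$-norm explicit.

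There is no real obstacle beyond justifying the interchange of expectation and the finite inner product (which is immediate since only finitely many terms appear before passing to the limit) and invoking completeness of $L^2(\Omega;H)$. One technical point worth mentioning: the construction appears to depend on the chosen orthonormal basis $(e_i)$, but independence of the choice follows by noting that for any two orthonormal bases the respective partial sum sequences have the same $L^2(\Omega;H)$-limit, using the isometry property of $\dot{B}_V$ from Remark \ref{whitenoiselinear} together with the bilinearity developed above. This justifies the notation $T\dot{B}_V$ and completes the argument.
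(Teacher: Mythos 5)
Your proof is correct and follows essentially the same route as the paper: both rest on the identity $\mathbb{E}\bigl\|\sum_{N<i\le M}\xi_i Te_i\bigr\|_H^2=\sum_{N<i\le M}\|Te_i\|_H^2$ (orthonormality of the Gaussian coefficients) together with the Hilbert--Schmidt bound, and conclude convergence of the partial sums. If anything, your write-up is slightly more careful, since you deduce Cauchyness in the Bochner space $L^2(\Omega;H)$ and invoke its completeness, whereas the paper phrases the same estimate as giving almost-sure Cauchyness.
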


\begin{proof}
For every $m, n > 0$ we have the estimate 
\begin{equation}\label{convergence_oftheseries}
E\left(\left\|\sum_{n<i\le m} \xi_i Te_i\right\|_H^2 \right)= \sum_{n<i\le m} \|Te_i\|_H^2 \leq \|T\|_{HS}^2 < \infty.
\end{equation}
The inequality in \eqref{convergence_oftheseries} shows that $\sum_{n<i\le m} \xi_i Te_i$ is Cauchy with probability one and consequently defines a well-defined $H$-valued random variable on $\Omega$. Furthermore, by exploiting the upper bound in \eqref{convergence_oftheseries}, it follows that $T\dot{B}_V \in L^2(\Omega, H)$.
\end{proof}

\begin{corollary}\label{whitenoisedual}
Let $\rho \in (0,1/2]$ be as in Theorem \ref{prop6}. If $s > \rho$, then $P$-almost surely, $\dot{B}_V|_{H^s_{W,V}(\mathbb{T})} \in H^{-s}_{W,V}(\mathbb{T})$, and
\begin{equation*}
\dot{B}_V = \sum_{i=0}^\infty \xi_i e_i,
\end{equation*}
where the series on the right-hand side converges in $H^{-s}_{W,V}(\mathbb{T})$.
\end{corollary}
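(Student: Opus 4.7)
The plan is to apply Proposition~\ref{expwhitedual} to the natural inclusion $i\colon L^2_V(\mathbb{T})\to H^{-s}_{W,V}(\mathbb{T})$. First, taking $t=0$ in Proposition~\ref{HSinclusion} shows that $i$ is trace-class whenever $s>\rho$, and in particular Hilbert--Schmidt. Choosing the orthonormal eigenbasis $\{\nu_i\}_{i\in\mathbb{N}}$ of $-\Delta_{W,V}$ as our $\{e_i\}$, which is permissible since Proposition~\ref{whitenoise-exp} is valid for any orthonormal basis of $L^2_V(\mathbb{T})$, Proposition~\ref{expwhitedual} then produces an $H^{-s}_{W,V}(\mathbb{T})$-valued random element
\begin{equation*}
F := \sum_{i=0}^\infty \xi_i \nu_i,
\end{equation*}
with convergence in $L^2(\Omega, H^{-s}_{W,V}(\mathbb{T}))$ and, by the Cauchy estimate \eqref{convergence_oftheseries} used in its proof, also $P$-almost surely in $H^{-s}_{W,V}(\mathbb{T})$.

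It remains to identify $F$ with $\dot{B}_V|_{H^s_{W,V}(\mathbb{T})}$. For any fixed $h\in H^s_{W,V}(\mathbb{T})$, the explicit formula for the dual pairing from Proposition~\ref{dualsob}, combined with the almost sure convergence of the partial sums in $H^{-s}_{W,V}(\mathbb{T})$ and the continuity of the pairing in its first argument, yields
\begin{equation*}
(F, h) = \lim_{N\to\infty}\sum_{i=0}^N \xi_i \langle \nu_i, h\rangle_V \quad \text{a.s.}
\end{equation*}
On the other hand, Proposition~\ref{whitenoise-exp} applied with the basis $\{\nu_i\}$ identifies exactly the same series with the a.s. value of $\dot{B}_V(h)$. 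Hence $(F,h)=\dot{B}_V(h)$ almost surely for each fixed $h\in H^s_{W,V}(\mathbb{T})$, so $F$ serves as the canonical $H^{-s}_{W,V}(\mathbb{T})$-valued realization of the restriction $\dot{B}_V|_{H^s_{W,V}(\mathbb{T})}$, giving the stated series representation.

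The one subtlety I anticipate is that the identification $(F,h)=\dot{B}_V(h)$ holds on an $h$-dependent null set, which a priori prevents a uniform statement over all $h\in H^s_{W,V}(\mathbb{T})$. This obstacle dissolves because the trace-class step of Proposition~\ref{expwhitedual} already produces $F(\omega)$ as a well-defined element of $H^{-s}_{W,V}(\mathbb{T})$ for $\omega$ outside a single null set; we therefore simply \emph{define} $\dot{B}_V|_{H^s_{W,V}(\mathbb{T})}$ to be this $F$, with the pointwise-in-$h$ coincidence confirming that the definition is the natural one, since both candidate versions agree almost surely on each test element.
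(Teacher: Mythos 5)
Your proposal is correct and follows essentially the same route as the paper: Hilbert--Schmidt (indeed trace-class) inclusion from Proposition~\ref{HSinclusion}, application of Proposition~\ref{expwhitedual} to get the $H^{-s}_{W,V}(\mathbb{T})$-valued limit, and identification with $\dot{B}_V$ on each $h\in H^s_{W,V}(\mathbb{T})$ via Proposition~\ref{whitenoise-exp}. Your explicit handling of the $h$-dependent null set is a slightly more careful articulation of a point the paper's proof passes over silently, but it is the same argument.
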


\begin{proof}
By Proposition \ref{HSinclusion}, the inclusion $i : L^2_V(\mathbb{T}) \to H^{-s}_{W,V}(\mathbb{T})$ is trace-class and thus Hilbert-Schmidt. Therefore, by Proposition \ref{expwhitedual},
\begin{equation*}
i(\dot{B}_V) = \sum_{i=0}^\infty \xi_i i(e_i)
\end{equation*}
is a well-defined $L^2(\Omega, \mathcal{F}, P)$ random variable in $H^{-s}_{W,V}(\mathbb{T})$. Furthermore, by Proposition \ref{whitenoise-exp}, we have the following equality for every $h \in H^s_{W,V}(\mathbb{T}) \subset L^2_V(\mathbb{T})$:
\begin{equation*}
\dot{B}_V(h) = i(\dot{B}_V)(h).
\end{equation*}
Therefore, $\dot{B}_V|_{H^s_{W,V}(\mathbb{T})} = i(\dot{B}_V)$.
\end{proof}

We are now prepared to focus our attention on the fractional Matérn-type SPDE of the form
\begin{equation}\label{fracstoch}
(\kappa^2 I - \Delta_{W,V})^\beta u = \dot{B}_V,
\end{equation}
where $ \kappa $ in \eqref{fracstoch} is a well-defined function on $ \mathbb{T} $, bounded and bounded away from zero. We observe that when $ V(x) = W(x) = x $, equation~\eqref{fracstoch} reduces to the Whittle–Matérn equation on the one-dimensional torus (i.e., the circle).

\begin{remark}
Unlike the SPDE in \cite[Section 8]{keale}, \eqref{fracstoch} is driven by $L^2_V(\mathbb{T})$-white noise.
\end{remark}

In what follows, we present the main result of this section concerning the existence and regularity of the sample paths of the solution to \eqref{fracstoch}.

\begin{theorem}
Let $\rho \in (0,1/2]$ be as in Theorem \ref{prop6}. Let $\beta > \rho/2$. Then, the solution to \eqref{fracstoch} is given by
\begin{equation*}
u = (\kappa^2 I - \Delta_{W,V})^{-\beta} \dot{B}_V,
\end{equation*}
which is a centered Gaussian random variable that belongs to $L^2(\Omega, H^{2\beta - \rho - \epsilon}_{W,V}(\mathbb{T}))$ for all $\epsilon > 0$, and has covariance operator $(\kappa^2 I - \Delta_{W,V})^{-2\beta}$.
\end{theorem}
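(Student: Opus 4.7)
The plan is to reduce the problem to a Hilbert--Schmidt estimate that follows directly from the preceding sections. Write $A := \kappa^2 I - \Delta_{W,V}$. Since $\kappa^2$ is bounded and bounded away from zero, $\kappa^2 I$ is a bounded self-adjoint operator on $L^2_V(\mathbb{T})$, so $A$ is self-adjoint on $\mathcal{D}_{W,V}(\mathbb{T})$, strictly positive (its quadratic form is $\int \kappa^2 f^2\,dV + \int (D_W^- f)^2\,dW$), and has compact resolvent. The Borel functional calculus then defines $A^{-\beta}$ for every $\beta>0$. Setting $c_1 := \min\{\inf_{\mathbb{T}}\kappa^2,1\}$ and $c_2 := \max\{\sup_{\mathbb{T}}\kappa^2,1\}$ yields the form comparison
$$c_1\,\langle (I-\Delta_{W,V}) f, f\rangle_V \;\le\; \langle A f, f\rangle_V \;\le\; c_2\,\langle (I-\Delta_{W,V}) f, f\rangle_V,$$
from which standard interpolation/iteration arguments give $\mathcal{D}(A^\beta) = H^{2\beta}_{W,V}(\mathbb{T})$ with equivalent norms. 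Consequently $A^{-\beta} : L^2_V(\mathbb{T}) \to H^{2\beta}_{W,V}(\mathbb{T})$ is a bounded isomorphism.

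The decisive step is to show that $A^{-\beta} : L^2_V(\mathbb{T}) \to H^{2\beta - \rho - \epsilon}_{W,V}(\mathbb{T})$ is Hilbert--Schmidt. Factor this operator as
$$L^2_V(\mathbb{T}) \xrightarrow{\;A^{-\beta}\;} H^{2\beta}_{W,V}(\mathbb{T}) \xrightarrow{\;\iota\;} H^{2\beta-\rho-\epsilon}_{W,V}(\mathbb{T}).$$
The first arrow is bounded by the preceding paragraph, and the second is trace-class (hence Hilbert--Schmidt) by Proposition \ref{HSinclusion}, since $\rho+\epsilon>\rho$. The composition of a bounded operator with a Hilbert--Schmidt operator is Hilbert--Schmidt, so the required property follows.

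Next, Proposition \ref{expwhitedual} applied with $T = A^{-\beta}$ and $H = H^{2\beta - \rho - \epsilon}_{W,V}(\mathbb{T})$ ensures that $u := A^{-\beta} \dot{B}_V$ is a well-defined element of $L^2(\Omega, H^{2\beta - \rho - \epsilon}_{W,V}(\mathbb{T}))$, representable in any ONB $\{e_i\}$ of $L^2_V(\mathbb{T})$ by the convergent series $\sum_i \xi_i A^{-\beta} e_i$. As the $L^2(\Omega)$-limit of finite Gaussian sums, $u$ is centered Gaussian. For the covariance, using self-adjointness of $A^{-\beta}$ (via functional calculus) and the isometry property of $\dot{B}_V$, for every $f,g \in L^2_V(\mathbb{T})$ we have
$$\mathbb{E}\bigl[\langle u, f\rangle_V\, \langle u, g\rangle_V\bigr] \;=\; \mathbb{E}\bigl[\dot{B}_V(A^{-\beta} f)\,\dot{B}_V(A^{-\beta} g)\bigr] \;=\; \langle A^{-\beta} f, A^{-\beta} g\rangle_V \;=\; \langle A^{-2\beta} f, g\rangle_V,$$
identifying the covariance operator as $(\kappa^2 I - \Delta_{W,V})^{-2\beta}$.

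The main obstacle is the domain identification $\mathcal{D}(A^\beta) = H^{2\beta}_{W,V}(\mathbb{T})$ for non-constant $\kappa$ and arbitrary $\beta > \rho/2$. For $\beta \le 1$ it follows from the form comparison together with complex interpolation between $L^2_V(\mathbb{T})$ and $\mathcal{D}(A) = \mathcal{D}(I - \Delta_{W,V})$ (this last equality holding because $A - (I-\Delta_{W,V}) = (\kappa^2-1)I$ is bounded). For $\beta > 1$ the identification extends by iteration, again exploiting boundedness of $\kappa^2 - 1$. Once this technical point is settled, the remainder of the proof is a direct assembly of Proposition \ref{HSinclusion} and Proposition \ref{expwhitedual}.
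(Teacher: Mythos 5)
Your overall architecture matches the paper's in outline (establish a Hilbert--Schmidt mapping property for $(\kappa^2 I-\Delta_{W,V})^{-\beta}$, then invoke Proposition \ref{expwhitedual}, then read off Gaussianity and the covariance from the series expansion), but the key step is obtained differently: the paper computes the Hilbert--Schmidt norm directly, bounding $\sum_i \gamma_i^{2\beta-\rho-\epsilon}(\kappa^2+\lambda_i)^{-2\beta}$ via the eigenvalue lower bound of Theorem \ref{prop6}, i.e.\ it works in the eigenbasis of $-\Delta_{W,V}$ and treats the eigenvalues of $\kappa^2 I-\Delta_{W,V}$ as $\kappa^2+\lambda_i$, whereas you factor $A^{-\beta}$ through the identification $\mathcal{D}(A^\beta)=H^{2\beta}_{W,V}(\mathbb{T})$ and then use the trace-class inclusion of Proposition \ref{HSinclusion}. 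Your covariance and Gaussianity arguments are fine.

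The genuine gap is the step you yourself flag as ``the main obstacle'': the claim that $\mathcal{D}(A^\beta)=H^{2\beta}_{W,V}(\mathbb{T})$ ``extends by iteration'' to $\beta>1$ for a non-constant $\kappa$ that is merely bounded and bounded away from zero. The perturbation argument $\mathcal{D}(A)=\mathcal{D}(I-\Delta_{W,V})$ plus Heinz--Kato/complex interpolation does settle $\beta\le 1$, but iteration requires multiplication by $\kappa^2$ to act boundedly on $H^{2}_{W,V}(\mathbb{T})$ (and on the higher-order spaces): indeed $f\in\mathcal{D}(A^2)$ means $f\in H^2_{W,V}$ and $\kappa^2 f-\Delta_{W,V}f\in H^2_{W,V}$, so $\mathcal{D}(A^2)=H^4_{W,V}$ forces $\kappa^2 f\in H^2_{W,V}$ for all $f\in H^4_{W,V}$, which fails for rough $\kappa$ (a bounded measurable $\kappa^2$ is not a multiplier of $H^2_{W,V}$, whose elements must be $W$-$V$-differentiable with prescribed jump structure). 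Without that, the interpolation base for $\beta\in(k,k+1)$ collapses, and one cannot even conclude that $A^{-\beta}$ maps $L^2_V(\mathbb{T})$ boundedly into $H^{2\beta}_{W,V}(\mathbb{T})$; operator monotonicity arguments do not rescue this since $t\mapsto t^\beta$ is not operator monotone for $\beta>1$. So as written your proof covers only $\beta\le 1$ (or constant $\kappa$, where $A$ commutes with $\Delta_{W,V}$ and the identification is immediate from the functional calculus). It is worth noting that the paper's own computation carries the same implicit restriction, since for non-constant $\kappa$ the eigenpairs of $\kappa^2 I-\Delta_{W,V}$ are not $(\kappa^2+\lambda_i,\nu_i)$; but if you want a complete argument in the regime where the theorem's proof is actually valid, you should either assume $\kappa$ constant (or commuting regularity of $\kappa^2$ as a multiplier on the scale $H^{s}_{W,V}$) or replace the iteration step by a direct spectral estimate as in the paper.
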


\begin{proof}
Observe that the eigenvalues of $(\kappa^2 I - \Delta_{W,V})^{-\beta}$ are given by $((\kappa^2 + \lambda_i)^{-\beta})_{i\in\mathbb{N}}$, where $(\lambda_i)_{i\in\mathbb{N}}$ are the eigenvalues of $-\Delta_{W,V}$. By Theorem \ref{prop6}, we have that $\kappa^2 + Cn^{1/\rho} \leq \kappa^2 + \lambda_n$ for $n\in\mathbb{N}$ and some constant $C>0$. Since $H_{W,V}^{s_2} \subset H_{W,V}^{s_1}$ for $s_1 < s_2$, it is enough to prove the result for $\epsilon > 0$ such that $2\beta - \rho - \epsilon > 0$. Therefore, since $\beta > \rho/2$, we have
$$\sum_{i=1}^\infty \frac{\lambda_i^{2\beta - \rho - \epsilon}}{(\kappa^2 + \lambda_i)^{2\beta}} \leq \sum_{i=1}^\infty \frac{1}{(\kappa^2 + Cn^{1/\rho})^{\rho + \epsilon}} < \infty,$$
where $\epsilon > 0$ was chosen so that $2\beta > \rho + \epsilon$. 

Therefore, $(\kappa^2 I - \Delta_{W,V})^{-\beta}$ is a Hilbert-Schmidt operator from $L^2_V(\mathbb{T})$ to $H_{W,V}^{2\beta - \rho - \epsilon}(\mathbb{T})$. By Proposition~\ref{expwhitedual}, we have that $u \in L^2(\Omega, H^{2\beta - \rho - \epsilon}(\mathbb{T}))$. Finally, by writing $u$ in terms of the expansion \eqref{expwhitenoiseHS}, we readily obtain that $u$ is a centered Gaussian random field with covariance operator $(\kappa^2 I - \Delta_{W,V})^{-2\beta}$.
\end{proof}

\subsection{Stochastic Partial Differential Equations}
Our goal in this section is to study stochastic partial differential equations (SPDEs) taking values in an analogue of the space of distributions. To this end, we work with the dual of the space $C_{W,V}^\infty(\mathbb{T})$. For simplicity, we set $D_{W,V}(\mathbb{T}) = C^{\infty}_{W,V}(\mathbb{T})$ and denote its strong dual by $D_{W,V}'(\mathbb{T})$. The space $D_{W,V}'(\mathbb{T})$ thus plays the role of the space of distributions on $\mathbb{T}$. Note that when $W(x) = V(x) = x$, the space $D_{W,V}'(\mathbb{T})$ coincides with the classical space of distributions on $\mathbb{T}$.

We are interested in the following parabolic stochastic partial differential equation in $D_{W,V}'(\mathbb{T})$:
\begin{equation}\label{eq:stochastic_partial_differential_equation}
dY_t = \alpha \Delta_{W,V}' Y_t \, dt + \beta \, dN_t,
\end{equation}
where $\alpha, \beta > 0$, $N$ is a mean-zero $D_{W,V}'(\mathbb{T})$-valued martingale, and $\Delta_{W,V}'$ denotes the action of $\Delta_{W,V}$ on $D_{W,V}'(\mathbb{T})$ as given by relation \eqref{eq:dual_action}. This SPDE generalizes (in the one-dimensional case) the equation obtained in \cite{farfansimasvalentim} as the equilibrium fluctuations of an interacting particle system with conductances.

In this section, we use basic results from \cite{kallianpur_1987}, which are summarized in Appendix \ref{app:spde_kallianpur}. We begin with the following result:

\begin{proposition}\label{prop:compatible_family_delta_W_V}
The triple $(C^{\infty}_{W,V}(\mathbb{T}), \Delta_{W,V}, L^2_{V}(\mathbb{T}))$ is a special compatible family (see Appendix \ref{app:spde_kallianpur} for the definition of special compatible families). Furthermore, the restriction of $\Delta_{W,V}$ to $D_{W,V}(\mathbb{T})$ belongs to $L(D_{W,V}(\mathbb{T}))$ and is the infinitesimal generator of a $(C_{0},1)$-semigroup on $D_{W,V}(\mathbb{T})$, which we denote by $P_{W,V}(t)$.
\end{proposition}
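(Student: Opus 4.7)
The plan is to combine the Fourier characterization of the higher-order Sobolev spaces obtained in Theorem~\ref{fouriersob} with the spectral functional calculus associated with the complete orthonormal eigenbasis $\{\nu_i\}_{i\in\mathbb{N}}$ of $-\Delta_{W,V}$. First, I would recall that Corollary~\ref{regeigenvect} guarantees $\nu_i \in C^{\infty}_{W,V}(\mathbb{T})$ for every $i$, while Corollary~\ref{hinfinityreg} yields $D_{W,V}(\mathbb{T}) = \bigcap_n H^n_{W,V}(\mathbb{T})$ with its natural projective (Fréchet) topology. The nuclearity of this space, already proved earlier in this section, together with the fact that each $\|\cdot\|_{n,W,V}^2$ is equivalent (via Theorem~\ref{fouriersob}) to $\sum_i (1+\lambda_i)^n \alpha_i^2$ for $f = \alpha_0 + \sum_i \alpha_i \nu_i$, is precisely the data needed to show that $(C^{\infty}_{W,V}(\mathbb{T}), \Delta_{W,V}, L^2_V(\mathbb{T}))$ is a special compatible family in the sense of Appendix~\ref{app:spde_kallianpur}.

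For the continuity of $\Delta_{W,V}$ on $D_{W,V}(\mathbb{T})$, I would observe that if $f = \alpha_0 + \sum_i \alpha_i \nu_i \in H^{n+2}_{W,V}(\mathbb{T})$, then $\Delta_{W,V} f = -\sum_i \lambda_i \alpha_i \nu_i$, so that Theorem~\ref{fouriersob} gives
\begin{equation*}
\|\Delta_{W,V} f\|^2_{n,W,V} \asymp \sum_i (1+\lambda_i)^n \lambda_i^2 \alpha_i^2 \le \sum_i (1+\lambda_i)^{n+2} \alpha_i^2 \asymp \|f\|^2_{n+2,W,V}.
\end{equation*}
Thus $\Delta_{W,V}: H^{n+2}_{W,V}(\mathbb{T}) \to H^{n}_{W,V}(\mathbb{T})$ is bounded for every $n \ge 0$, which implies continuity on the projective limit, i.e.\ $\Delta_{W,V}|_{D_{W,V}(\mathbb{T})} \in L(D_{W,V}(\mathbb{T}))$.

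For the semigroup, I would define
\begin{equation*}
P_{W,V}(t) f := \alpha_0 + \sum_{i=1}^\infty e^{-\lambda_i t} \alpha_i \nu_i, \qquad f = \alpha_0 + \sum_{i=1}^\infty \alpha_i \nu_i \in D_{W,V}(\mathbb{T}),\ t\ge 0,
\end{equation*}
and verify its properties in each norm $\|\cdot\|_{n,W,V}$. Since $\lambda_i \ge 0$ implies $e^{-\lambda_i t}\le 1$, the bound $\|P_{W,V}(t)f\|_{n,W,V} \le \|f\|_{n,W,V}$ holds uniformly for $n\in\mathbb{N}$ and $t\ge 0$, which is precisely the equicontinuity encoded in the ``$(C_0,1)$'' label. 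The semigroup property and the identity at $t=0$ are immediate from the eigenvalue formula. To identify $\Delta_{W,V}$ as the infinitesimal generator, I would compute
\begin{equation*}
\frac{P_{W,V}(t) f - f}{t} = \sum_{i=1}^\infty \frac{e^{-\lambda_i t}-1}{t}\, \alpha_i \nu_i \xrightarrow[t\to 0^+]{} -\sum_{i=1}^\infty \lambda_i \alpha_i \nu_i = \Delta_{W,V} f,
\end{equation*}
with convergence in every $H^n_{W,V}(\mathbb{T})$; the elementary inequality $|e^{-x}-1|/x \le 1$ for $x>0$ together with the fact that $f\in H^{n+2}_{W,V}(\mathbb{T})$ (so that $\sum_i \lambda_i^2 (1+\lambda_i)^n \alpha_i^2 <\infty$) makes this a direct application of the dominated convergence theorem in $\ell^2$.

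The main subtlety I anticipate is not any single estimate but rather ensuring that the strong continuity of $P_{W,V}(t)$ and the convergence of the difference quotients occur simultaneously in all the Hilbert norms $\|\cdot\|_{n,W,V}$, so as to yield continuity and differentiability in the Fréchet topology of $D_{W,V}(\mathbb{T})$ rather than in a single Hilbert space. This is handled precisely because the bound $e^{-\lambda_i t} \le 1$ is uniform in $i$ and $t$, so that each Hilbert-level estimate can be done independently and then combined through the definition of the projective topology, with no cross-level compatibility issues arising.
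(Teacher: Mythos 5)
Your proposal is correct, and for the first claim it matches the paper: the verification that $(C^{\infty}_{W,V}(\mathbb{T}), \Delta_{W,V}, L^2_{V}(\mathbb{T}))$ is a special compatible family rests, in both cases, on the self-adjointness and nonnegativity of $-\Delta_{W,V}$ from Section~\ref{sec:review}, the eigenvalue asymptotics of Theorem~\ref{prop6} (which give $\sum_i (1+\lambda_i)^{-2r}<\infty$ for $2r>\rho$, i.e.\ condition 3 of Definition~\ref{def:special_compatible_family}; it would be slightly cleaner to cite this summability directly rather than routing it through the nuclearity statement), and the identifications \eqref{H2s}, \eqref{intersection}, and Corollary~\ref{hinfinityreg} of the intersection space with $C^{\infty}_{W,V}(\mathbb{T})$. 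Where you genuinely diverge is the second claim: the paper disposes of it in one line by invoking the abstract Proposition~\ref{prop:special_compatible_family} (the Kallianpur--Perez-Abreu result that a special compatible family automatically yields $L|_{X_\infty}\in L(X_\infty)$ generating a $(C_0,1)$-semigroup), whereas you reconstruct the semigroup explicitly as $P_{W,V}(t)f=\alpha_0+\sum_i e^{-\lambda_i t}\alpha_i\nu_i$ and verify continuity of $\Delta_{W,V}$ on the projective limit, the $(C_0,1)$ bounds, strong continuity, and the generator identification norm by norm via the uniform bounds $e^{-\lambda_i t}\le 1$ and $|e^{-x}-1|/x\le 1$. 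Your route is longer but self-contained and makes the semigroup concrete (which is also essentially how the abstract result is proved); the paper's route is shorter and emphasizes that everything needed is already packaged in the special-compatible-family framework. Both are valid; your estimates are correct, with the only cosmetic caveat that the equivalence $\|f\|_{n,W,V}^2\asymp\sum_i(1+\lambda_i)^n\alpha_i^2$ has constants depending on $n$, which is harmless for the projective-limit argument.
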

\begin{proof}
    The fact that the triple $(C^{\infty}_{W,V}(\mathbb{T}), \Delta_{W,V}, L^2_{V}(\mathbb{T}))$ is a special compatible family follows from Theorem \ref{prop6} and equations \eqref{H2s}, \eqref{formuladual}, and \eqref{intersection}. The remaining claims are consequences of Proposition \ref{prop:special_compatible_family}.
\end{proof}

We now state the following result regarding the existence and uniqueness of solutions to equations of the form \eqref{eq:stochastic_partial_differential_equation}:

\begin{theorem}\label{thm:existence_uniqueness_spde_W_V}
Let $\alpha, \beta > 0$. Let $N = (N_t)_{t \geq 0}$ be a mean-zero $D_{W,V}'(\mathbb{T})$-valued martingale such that $N_0 = 0$ and $E(N_t(f))^2 < \infty$ for all $f \in D_{W,V}(\mathbb{T})$. Let $Y_0$ be a $D_{W,V}'(\mathbb{T})$-valued random variable such that $E\|Y_0\|^2_{-r_0} < \infty$ for some $r_0 \in \mathbb{N}$. Then, the stochastic partial differential equation \eqref{eq:stochastic_partial_differential_equation} admits a unique solution given by
\begin{equation*}
Y_t = P_{W,V}'(t)Y_0 + \int_0^t P_{W,V}'(t-s) \beta \, dN_s,
\end{equation*}
where $P_{W,V}'(t)$ denotes the action of $P_{W,V}(t)$ on $D_{W,V}'(\mathbb{T})$ as given by relation \eqref{eq:dual_action}.
Furthermore, $Y \in D([0,\infty); D_{W,V}'(\mathbb{T}))$ almost surely, and for every $t_0 > 0$ there exists $p_{t_0} > 0$ such that $Y^{t_0} \in D([0,t_0]; D_{W,V}'(\mathbb{T}))$ almost surely and 
$$\mathbb{E}\left(\sup_{0\leq t\leq t_0}\|Y_t\|_{-p_{t_0}}^2\right)<\infty,$$
where $Y^{t_0}$ denotes the restriction of $Y$ to the interval $[0,t_0]$.
\end{theorem}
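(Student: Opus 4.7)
The plan is to reduce the claim to the abstract existence and uniqueness theorem for parabolic SPDEs on duals of nuclear spaces due to Kallianpur and Perez-Abreu (summarized in Appendix \ref{app:spde_kallianpur}), and to obtain the quantitative sample-path regularity by exploiting the Hilbert-Schmidt structure inherent to the scale $\{H^{s}_{W,V}(\mathbb{T})\}_{s\in\mathbb{R}}$.

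First, I would verify that the hypotheses of the abstract existence-uniqueness theorem are met. Proposition \ref{prop:compatible_family_delta_W_V} already provides that $(C^{\infty}_{W,V}(\mathbb{T}), \Delta_{W,V}, L^2_V(\mathbb{T}))$ is a special compatible family and that $\alpha\Delta_{W,V}$ (being a constant multiple of $\Delta_{W,V}$) generates a $(C_0,1)$-semigroup $P_{W,V}(\alpha t)$ on $D_{W,V}(\mathbb{T})$. Nuclearity of $D_{W,V}(\mathbb{T})$, which was established earlier, allows us to transfer this semigroup to the dual via \eqref{eq:dual_action}, giving a weakly continuous semigroup $P'_{W,V}(\alpha t)$ on $D'_{W,V}(\mathbb{T})$. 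The assumptions on $N$ (mean zero martingale with $E(N_t(f))^2<\infty$ for every $f\in D_{W,V}(\mathbb{T})$, $N_0=0$) and on $Y_0$ (namely $E\|Y_0\|_{-r_0}^2<\infty$ for some $r_0$) are exactly those required to apply the abstract theorem in Appendix \ref{app:spde_kallianpur}.

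Applying that theorem yields existence and uniqueness of a $D'_{W,V}(\mathbb{T})$-valued process satisfying \eqref{eq:stochastic_partial_differential_equation}, together with the mild-solution representation
\begin{equation*}
Y_t = P'_{W,V}(\alpha t)Y_0 + \int_0^t P'_{W,V}(\alpha(t-s))\,\beta\,dN_s,
\end{equation*}
where the stochastic integral is interpreted in the dual-of-nuclear sense (as constructed in \cite{kallianpur_1987}). Uniqueness follows from the standard variation-of-parameters argument: any two solutions differ by a process solving the homogeneous equation with zero initial datum, which is identically zero by the semigroup property.

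For the pathwise regularity, the key point is to choose an appropriate level $p_{t_0}$ in the Hilbert scale \eqref{formuladual}. Since $P_{W,V}(\alpha t)$ acts continuously on each $H^s_{W,V}(\mathbb{T})$ (as it is diagonalized in the basis $\{\nu_i\}$ with eigenvalues $e^{-\alpha\lambda_i t}$, which are uniformly bounded by $1$), the adjoint semigroup acts contractively on each $H^{-s}_{W,V}(\mathbb{T})$. Using the martingale assumption on $N$, one deduces, by Proposition \ref{HSinclusion}, that for $s>\rho$ the inclusion $L^2_V(\mathbb{T})\hookrightarrow H^{-s}_{W,V}(\mathbb{T})$ is Hilbert-Schmidt, so the stochastic convolution takes values in $H^{-p_{t_0}}_{W,V}(\mathbb{T})$ for some $p_{t_0}$ depending on $r_0$ and on the growth of $E(N_{t_0}(f))^2$ as $f$ ranges over a bounded set in some $H^{q}_{W,V}(\mathbb{T})$. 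The càdlàg property in $D'_{W,V}(\mathbb{T})$ and the bound
\begin{equation*}
E\Bigl(\sup_{0\le t\le t_0}\|Y_t\|_{-p_{t_0}}^2\Bigr)<\infty
\end{equation*}
then follow from Doob's maximal inequality applied to the stochastic convolution in the Hilbert space $H^{-p_{t_0}}_{W,V}(\mathbb{T})$, combined with the contractivity of $P'_{W,V}(\alpha t)$ on that space.

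The main obstacle I anticipate is the careful choice of $p_{t_0}$ and the verification that the stochastic convolution admits the required Hilbertian realization: one must choose $p_{t_0}$ large enough so that (i) $E\|Y_0\|^2_{-p_{t_0}}<\infty$ (which is automatic from $p_{t_0}\ge r_0$), (ii) the covariance functional of $N$ defines a trace-class operator on $H^{-p_{t_0}}_{W,V}(\mathbb{T})$ via Proposition \ref{HSinclusion}, and (iii) the semigroup acts as a contraction. With such $p_{t_0}$ fixed, the mild solution becomes an $H^{-p_{t_0}}_{W,V}(\mathbb{T})$-valued càdlàg process, which in turn gives the asserted regularity in $D'_{W,V}(\mathbb{T})$ by continuity of the inclusion $H^{-p_{t_0}}_{W,V}(\mathbb{T})\hookrightarrow D'_{W,V}(\mathbb{T})$.
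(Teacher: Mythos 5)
Your proposal is correct and takes essentially the same route as the paper: the proof consists of verifying, via Proposition \ref{prop:compatible_family_delta_W_V}, that the hypotheses of the abstract Kallianpur--Perez-Abreu result (Proposition \ref{prop:existence_uniqueness_spde}) hold and then invoking it, which is exactly the paper's argument. Your additional sketch of the c\`adl\`ag property and of the bound $\mathbb{E}\bigl(\sup_{0\le t\le t_0}\|Y_t\|_{-p_{t_0}}^2\bigr)<\infty$ via Proposition \ref{HSinclusion} and Doob's inequality is redundant, since these statements are already part of the conclusion of Proposition \ref{prop:existence_uniqueness_spde}.
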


\begin{proof}
The proof follows from Proposition \ref{prop:compatible_family_delta_W_V} and Proposition \ref{prop:existence_uniqueness_spde}.
\end{proof}

Finally, we have the following result concerning Gaussian solutions:

\begin{corollary}
Let $N = (N_t)_{t \geq 0}$ be a mean-zero $D_{W,V}'(\mathbb{T})$-valued Gaussian martingale with quadratic variation
\begin{equation*}
\langle N(f) \rangle_t = t \int_{\mathbb{T}} \left[ D_W^- f \right]^2 \, dW.
\end{equation*}
Let $Y_0$ be a Gaussian random field in $D_{W,V}'(\mathbb{T})$, independent of $N$, such that $E\|Y_0\|^2_{-r_0} < \infty$ for some $r_0 \in \mathbb{N}$. Then, the stochastic partial differential equation \eqref{eq:stochastic_partial_differential_equation} admits a unique solution.
Furthermore, the solution $Y_t$ is a Gaussian process.
\end{corollary}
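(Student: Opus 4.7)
The plan is to reduce the existence and uniqueness statement to Theorem~\ref{thm:existence_uniqueness_spde_W_V} and then separately establish the Gaussian character of the solution.

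The first step is to verify that the Gaussian martingale $N$ meets the hypotheses of Theorem~\ref{thm:existence_uniqueness_spde_W_V}. We clearly have $N_0 = 0$ since $N_0$ is mean-zero Gaussian with quadratic variation zero, so $N_0(f) = 0$ almost surely for each $f$, and the countability/nuclearity of the dual pairing propagates this to $N_0 = 0$ in $D_{W,V}'(\mathbb{T})$. For the moment condition, note that for any $f \in D_{W,V}(\mathbb{T}) \subset H_{W,V}(\mathbb{T})$, the weak derivative $D_W^- f$ lies in $L^2_W(\mathbb{T})$, so
\begin{equation*}
\mathbb{E}[N_t(f)^2] \;=\; \langle N(f)\rangle_t \;=\; t\int_{\mathbb{T}} \bigl[D_W^- f\bigr]^2 \, dW \;<\;\infty.
\end{equation*}
Together with the hypothesis $\mathbb{E}\|Y_0\|_{-r_0}^2 < \infty$, Theorem~\ref{thm:existence_uniqueness_spde_W_V} gives existence and uniqueness of the solution in the explicit form
\begin{equation*}
Y_t \;=\; P_{W,V}'(t) Y_0 + \beta\int_0^t P_{W,V}'(t-s)\,dN_s.
\end{equation*}

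The second step is to verify Gaussianity. Fix $f \in D_{W,V}(\mathbb{T})$. Using the duality relation \eqref{eq:dual_action} and the defining property of the stochastic integral in the Kallianpur--Perez-Abreu framework, we may write
\begin{equation*}
Y_t(f) \;=\; Y_0\bigl(P_{W,V}(t) f\bigr) \;+\; \beta\int_0^t\! dN_s\bigl(P_{W,V}(t-s) f\bigr).
\end{equation*}
The first summand is the image of the Gaussian field $Y_0$ under a fixed element of $D_{W,V}(\mathbb{T})$ and is therefore Gaussian. The second summand, being the stochastic integral of the deterministic $D_{W,V}(\mathbb{T})$-valued function $s \mapsto P_{W,V}(t-s)f$ against the Gaussian martingale $N$, is the $L^2$-limit of Riemann-type sums $\sum_j N_{s_{j+1}-s_j}(P_{W,V}(t-s_j)f)$; each such sum is a finite linear combination of increments of $N$ paired with fixed test functions, hence Gaussian. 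Since $L^2$-limits of Gaussians are Gaussian, the stochastic integral is Gaussian, and independence between $Y_0$ and $N$ gives the Gaussianity of $Y_t(f)$.

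To obtain joint Gaussianity of the process, I would apply the same argument to arbitrary finite linear combinations $\sum_k c_k Y_{t_k}(f_k)$, which reduce to an independent sum of a linear functional of $Y_0$ and an $L^2$-limit of finite linear combinations of increments of $N$. All such quantities are Gaussian, so $(Y_{t_1}(f_1),\dots,Y_{t_n}(f_n))$ is jointly Gaussian for every choice of $t_k$ and $f_k$, which by definition means that $Y$ is a Gaussian process with values in $D_{W,V}'(\mathbb{T})$. The main technical point to be careful about is the precise interpretation of the stochastic integral in the Hilbert scale induced by the nuclear topology of $D_{W,V}(\mathbb{T})$, but the construction summarized in Appendix~\ref{app:spde_kallianpur} provides the approximation scheme that transfers finite-dimensional Gaussianity to the infinite-dimensional limit.
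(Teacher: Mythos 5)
Your existence--uniqueness step matches the paper: you verify $N_0=0$, the moment bound $\mathbb{E}[N_t(f)^2]=\langle N(f)\rangle_t<\infty$, and the condition on $Y_0$, and then invoke Theorem~\ref{thm:existence_uniqueness_spde_W_V}. The gap is in the Gaussianity step. You bypass Proposition~\ref{prop:gaussianity_solution} and argue directly that the Riemann-type sums $\sum_j \bigl(N_{s_{j+1}}-N_{s_j}\bigr)\bigl(P_{W,V}(t-s_j)f\bigr)$ are Gaussian ``because each term is an increment of $N$ paired with a fixed test function.'' But the summands involve \emph{different} test functions $P_{W,V}(t-s_j)f$ at different times, and the definition of a $D_{W,V}'(\mathbb{T})$-valued Gaussian martingale used in this framework (see the Appendix) only asserts that, for each \emph{fixed} $f$, the real-valued process $(N_t(f))_{t\geq 0}$ is Gaussian. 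It does not give joint Gaussianity of the family $\{N_t(g): t\geq 0,\ g\in D_{W,V}(\mathbb{T})\}$ across distinct test functions and times, so a linear combination mixing different $g$'s need not be Gaussian on the basis of that definition alone. Your proposal silently assumes this stronger joint Gaussianity, and the same assumption underlies your final ``finite linear combinations $\sum_k c_k Y_{t_k}(f_k)$'' step.

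The missing ingredient is exactly the hypothesis of Proposition~\ref{prop:gaussianity_solution}: independent increments of $N$. With independent increments, each increment $(N_{s_{j+1}}-N_{s_j})(g_j)$ is Gaussian (difference of two values of the Gaussian process $N(g_j)$ for a single fixed $g_j$), the terms over disjoint time intervals are independent, and sums of independent Gaussians are Gaussian; this is what makes the $L^2$-limit argument go through without assuming joint Gaussianity across test functions. The paper's proof therefore reduces the corollary to checking independent increments, and this is where the specific form of the quadratic variation is used beyond mere integrability: since $\langle N(f)\rangle_t = t\int_{\mathbb{T}}[D_W^- f]^2\,dW$ is deterministic and linear in $t$, L\'evy's characterization shows each $N(f)$ is a time-changed Brownian motion, hence has independent increments. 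To repair your argument you should either verify independent increments in this way and then cite Proposition~\ref{prop:gaussianity_solution}, or explicitly prove (rather than assume) joint Gaussianity of $N$ as a $D_{W,V}'(\mathbb{T})$-valued process, which again essentially requires the independent-increments property.
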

\begin{proof}
By Theorem \ref{thm:existence_uniqueness_spde_W_V}, the stochastic partial differential equation \eqref{eq:stochastic_partial_differential_equation} admits a unique solution.
It remains to verify that the solution is Gaussian. By Proposition \ref{prop:gaussianity_solution}, it suffices to check that $N$ has independent increments. However, by L\'evy's characterization of Brownian motion, for each $f \in D_{W,V}(\mathbb{T})$, the process $N(f)$ is a time-changed Brownian motion. Therefore, $N$ has independent increments.
\end{proof}

\begin{remark}
    Observe that it is immediate that if we consider fractional powers $(-\Delta_{W,V})^s$ of the Laplacian $\Delta_{W,V}$ for $s>0$, then $(C^{\infty}_{W,V}(\mathbb{T}), (-\Delta_{W,V})^s, L^2_{V}(\mathbb{T}))$ is also a special compatible family. Therefore, results analogous to those in this section hold for the fractional stochastic partial differential equations
    \begin{equation*}
    dY_t = - \alpha [(-\Delta_{W,V})^s]' Y_t \, dt + \beta \, dN_t,
    \end{equation*}
    with $\alpha, \beta > 0$ and the same conditions on $N$ and $Y_0$ as in Theorem \ref{thm:existence_uniqueness_spde_W_V} or Proposition \ref{prop:gaussianity_solution}, where $[(-\Delta_{W,V})^s]'$ denotes the action of $(-\Delta_{W,V})^s$ on $D_{W,V}'(\mathbb{T})$ as given by relation \eqref{eq:dual_action}.
\end{remark}

\noindent\textbf{Author Contributions} Kelvin J.R Almeida-Sousa and Alexandre B. Simas contributed equally to this work.\\\\
\textbf{Funding}
The first author acknowledges the support of CAPES (Coordenação de Aperfeiçoamento de Pessoal de Nível Superior), Brazil, through grant No. 88882.440725/2019-01.\\\\
\textbf{Data availability} Not applicable.

\subsection*{Declarations}
\textbf{Conflict of interest} The authors have no Conflict of interest.\\\\
\textbf{Ethics approval } Not applicable.\\\\

\appendix
\section{Stochastic Partial Differential Equations in Nuclear Spaces}\label{app:spde_kallianpur}
In this appendix, we provide a brief overview of basic concepts related to nuclear spaces, as well as some results concerning stochastic differential equations in such spaces.

Let $X$ be a vector space equipped with a sequence of inner products $\langle \cdot, \cdot \rangle_n$ for $n \in \mathbb{N}$, such that the associated norms are increasing: for all $x \in X$ and $n < m$, we have $\|x\|_n \leq \|x\|_m$. Denote by $X_n$ the completion of $X$ with respect to $\|\cdot\|_n$. Define
\begin{equation*}
X_\infty = \bigcap_{n=1}^\infty X_n.
\end{equation*}
The space $(X_\infty, (\|\cdot\|_n)_{n \in \mathbb{N}})$ is called a countably Hilbert space and forms a Fréchet space with respect to the metric
\begin{equation}\label{metric_nuclear}
d(f, g) = \sum_{n=1}^\infty 2^{-n} \frac{\|f - g\|_n}{1 + \|f - g\|_n}.
\end{equation}
Since the norms are increasing, we have $X_m \subset X_n$ for all $m \geq n$.

A countably Hilbert space $X_\infty$ is called \emph{nuclear} if, for each $n \geq 0$, there exists $m > n$ such that the natural inclusion $i_{m,n}: X_m \to X_n$ is a Hilbert--Schmidt operator. That is, for any orthonormal basis $\{e_j\}_{j \geq 1}$ of $X_m$, we have
\begin{equation*}
\sum_{j=1}^\infty \|e_j\|_n^2 < \infty.
\end{equation*}

Let $L(X_\infty)$ denote the space of continuous linear operators from $X_\infty$ to itself. Let $X_{-n}$ denote the Hilbert space dual of $X_n$. For any $y \in X_{-n}$, define
\begin{equation*}
\|y\|_{-n} = \sup_{\substack{\|x\|_n \leq 1\\ x\in X_n}} |y(x)|.
\end{equation*}
We have
\begin{equation*}
X_{-n} \subset X_{-m} \quad \text{for all } m \geq n.
\end{equation*}

Let $ X_{-\infty} $ be the topological dual of $ X_{\infty} $ with respect to the strong topology, which is defined by the system of neighborhoods of zero of the form $ \{ y \in X_{-\infty} : \|y\|_{-\infty,B} < \epsilon \} $, where
\begin{equation*}
\|y\|_{-\infty,B} = \sup\{ |y(x)| : x \in B \},
\end{equation*}
and $ B $ is a bounded subset of $X_{\infty}$. That is, for every neighborhood $V$ of zero in the topology induced by the metric $d(\cdot,\cdot)$ given in \eqref{metric_nuclear}, there exists a constant $r > 0$ such that $B \subset r V$. Thus,
\begin{equation*}
X_{-\infty} = \bigcup_{r=1}^{\infty} X_{-r}.
\end{equation*}

Let $ T \in L(X_\infty) $. The operator $T$ induces a continuous linear operator on $X_{-\infty}$, denoted by $T'$, which is defined as follows: for $y \in X_{-\infty}$ and $x \in X_\infty$,
\begin{equation}\label{eq:dual_action}
    (T'y)(x) := y(Tx).
\end{equation}

We now define the notions of a $ (C_{0},1) $-semigroup and its infinitesimal generator on a countably Hilbert nuclear space.

\begin{definition}
A family of linear operators $ \{S(t) : t \geq 0\} $ on $ X_\infty $ is called a $ (C_{0},1) $-semigroup if:
\begin{enumerate}
    \item $ S(t_1)S(t_2) = S(t_1 + t_2) $ for all $ t_1, t_2 \geq 0 $, and $ S(0) = I $;
    \item The mapping $ t \mapsto S(t)x $ is continuous in $ X_\infty $ for each $ x \in X_\infty $;
    \item For every $ q \geq 0 $, there exist constants $ M_q > 0 $, $ \sigma_q > 0 $, and $ p \geq q $ such that
    \begin{equation*}
    \|S(t)x\|_q \leq M_q e^{\sigma_q t} \|x\|_p, \quad \forall x \in X_\infty,\ t > 0.
    \end{equation*}
\end{enumerate}
\end{definition}

\begin{definition}
Let $\{S(t) : t \geq 0\}$ be a $(C_0,1)$-semigroup on $X_\infty$. The infinitesimal generator $A$ of $S(t)$ is defined by
\begin{equation*}
A x = \lim_{s \downarrow 0} \frac{S(s)x - x}{s}
\end{equation*}
whenever the limit exists in $X_\infty$. The domain of $A$, denoted $D(A)$, is the set of all $x \in X_\infty$ for which the above limit exists.
\end{definition}

\begin{definition}\label{def:special_compatible_family}
A triple $(X_{\infty}, L, H)$ is called a special compatible family if the following conditions hold:
\begin{enumerate}
    \item $H$ is a real separable Hilbert space with inner product $\langle \cdot, \cdot \rangle_H$;
    \item $L$ is a closed, densely defined, self-adjoint operator on $H$ with $\langle -L \varphi, \varphi \rangle_H \geq 0$ for all $\varphi \in D(L)$;
    \item There exists $r > 0$ such that $(I - L)^{-r}$ is Hilbert--Schmidt;
    \item $X_{\infty}$ is defined as
    \begin{equation*}
    X_{\infty} = \left\{ \varphi \in H : \| (I - L)^r \varphi \|_H^2 < \infty \text{ for all } r \in \mathbb{R} \right\},
    \end{equation*}
    with inner product
    \begin{equation*}
    \langle \varphi, \psi \rangle_r = \sum_{j=1}^\infty (1 + \lambda_j)^{2r} \langle \varphi, \varphi_j \rangle_H \langle \psi, \varphi_j \rangle_H,
    \end{equation*}
    where $\{\varphi_j\}_{j \geq 1}$ is a complete orthonormal system of eigenvectors of $-L$ with $-L \varphi_j = \lambda_j \varphi_j$.
\end{enumerate}
\end{definition}

The following proposition summarizes some results proved in \cite[Section 3]{kallianpur_1987}:

\begin{proposition}\label{prop:special_compatible_family}
Suppose $(X_{\infty}, L, H)$ is a special compatible family in the sense of Definition \ref{def:special_compatible_family}. Then, the restriction of $L$ to $X_{\infty}$ (also denoted by $L$) belongs to $L(X_{\infty})$, is the infinitesimal generator of a $(C_{0},1)$-semigroup on $X_{\infty}$, and $X_{\infty}$ is a nuclear space.
\end{proposition}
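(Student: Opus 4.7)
The plan is to verify the three assertions—continuity of $L$ on $X_\infty$, existence of a $(C_0,1)$-semigroup generated by $L$, and nuclearity of $X_\infty$—by systematically exploiting the spectral decomposition afforded by the orthonormal basis $\{\varphi_j\}_{j\geq 1}$ of eigenvectors of $-L$ with eigenvalues $\lambda_j\geq 0$. Throughout, any $\varphi\in X_\infty$ can be written as $\varphi=\sum_j a_j\varphi_j$ with $a_j=\langle \varphi,\varphi_j\rangle_H$, and the norm $\|\varphi\|_r^2=\sum_j(1+\lambda_j)^{2r}a_j^2$ is finite for every $r\in\mathbb{R}$.

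First, for the nuclearity of $X_\infty$, I would observe that the family $\{e_j^{(m)}:=(1+\lambda_j)^{-m}\varphi_j\}_{j\geq 1}$ is an orthonormal basis of $X_m$. Hence, for $m>n$,
\begin{equation*}
\sum_{j=1}^{\infty}\|e_j^{(m)}\|_n^2=\sum_{j=1}^{\infty}(1+\lambda_j)^{2(n-m)}.
\end{equation*}
By hypothesis (3) in Definition \ref{def:special_compatible_family}, $(I-L)^{-r}$ is Hilbert--Schmidt for some $r>0$, which is exactly the statement $\sum_j(1+\lambda_j)^{-2r}<\infty$. Choosing $m=n+r$ (or any larger value) shows that the inclusion $i_{m,n}:X_m\to X_n$ is Hilbert--Schmidt, giving nuclearity.

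Second, for the continuity of $L$ on $X_\infty$, the computation
\begin{equation*}
\|L\varphi\|_r^2=\sum_{j=1}^{\infty}\lambda_j^2(1+\lambda_j)^{2r}a_j^2\le \sum_{j=1}^{\infty}(1+\lambda_j)^{2(r+1)}a_j^2=\|\varphi\|_{r+1}^2
\end{equation*}
shows that $L$ maps $X_{r+1}$ continuously into $X_r$ for every $r$, and hence $L\in L(X_\infty)$ with respect to the Fréchet topology.

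Third, for the semigroup, I would define $S(t)\varphi:=\sum_j e^{-\lambda_j t}a_j\varphi_j$ for $\varphi=\sum_j a_j\varphi_j\in X_\infty$; this is the standard functional-calculus semigroup generated by $L$ on $H$. Since $\lambda_j\ge 0$, we immediately obtain the uniform bound
\begin{equation*}
\|S(t)\varphi\|_r^2=\sum_{j=1}^{\infty}e^{-2\lambda_j t}(1+\lambda_j)^{2r}a_j^2\le \|\varphi\|_r^2,
\end{equation*}
so the $(C_0,1)$-growth condition holds with $M_r=1$, $\sigma_r=0$, and $p=r$. Strong continuity $t\mapsto S(t)\varphi$ in each $\|\cdot\|_r$ follows from dominated convergence applied to the tail $\sum_{j\ge N}(1+\lambda_j)^{2r}a_j^2$ together with uniform continuity of each finite truncation. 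Identifying $L$ with the infinitesimal generator reduces to showing $\|s^{-1}(S(s)\varphi-\varphi)-L\varphi\|_r\to 0$ as $s\downarrow 0$, for which I would bound the summand by $|s^{-1}(e^{-\lambda_j s}-1)+\lambda_j|^2(1+\lambda_j)^{2r}a_j^2$, dominate it by $4\lambda_j^2(1+\lambda_j)^{2r}a_j^2\le 4(1+\lambda_j)^{2(r+1)}a_j^2$ (which is summable since $\varphi\in X_\infty$), and conclude by dominated convergence.

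The only slightly delicate step is the last one: verifying that the derivative of the semigroup coincides with $L$ in the Fréchet topology of $X_\infty$ rather than merely in $H$. The domination by the $\|\cdot\|_{r+1}$-norm, which is finite for $\varphi\in X_\infty$, is what makes the argument work and is precisely where the countably-Hilbert structure is used in an essential way.
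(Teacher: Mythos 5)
Your proof is correct. Note, however, that the paper does not prove this proposition at all: it is stated explicitly as a summary of results established in Perez-Abreu and Kallianpur \cite{kallianpur_1987}, Section 3, so there is no in-paper argument to compare against. What you have done is reconstruct, in a self-contained way, essentially the standard spectral argument underlying that reference: the orthonormal systems $\{(1+\lambda_j)^{-m}\varphi_j\}$ in $X_m$ reduce nuclearity to the summability $\sum_j(1+\lambda_j)^{-2r}<\infty$ guaranteed by the Hilbert--Schmidt hypothesis on $(I-L)^{-r}$; the bound $\|L\varphi\|_r\le\|\varphi\|_{r+1}$ gives continuity of $L$ in the Fr\'echet topology; and the functional-calculus semigroup $S(t)\varphi=\sum_j e^{-\lambda_j t}\langle\varphi,\varphi_j\rangle_H\varphi_j$, together with the domination $\bigl|s^{-1}(e^{-\lambda_j s}-1)+\lambda_j\bigr|\le 2\lambda_j$ and dominated convergence in each $\|\cdot\|_r$, identifies $L$ as the generator of a $(C_0,1)$-semigroup. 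All the estimates check out. Two cosmetic points: when the exponent $r$ from the Hilbert--Schmidt hypothesis is not an integer you should take $m$ to be any integer with $m-n\ge r$ (your parenthetical ``or any larger value'' already covers this, since $(1+\lambda_j)\ge 1$); and the definition of a $(C_0,1)$-semigroup formally asks for $\sigma_q>0$, so take, say, $\sigma_q=1$ rather than $\sigma_q=0$, which only weakens your bound. The value of your version is that it makes the proposition independent of the external citation; the cost is only length, since the argument is the same spectral one the cited work uses.
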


Throughout this section, let $(\Omega, \mathcal{F}, (\mathcal{F}_t)_{t \geq 0}, P)$ be a filtered probability space satisfying the usual conditions; that is, the filtration is $P$-complete and right-continuous.

\begin{definition}
A stochastic process $M = (M_t)_{t \geq 0}$ is called an $X_{-\infty}$-valued $\mathcal{F}_t$-martingale if, for every $y \in X_{\infty}$, the real-valued process $(M_t(y))_{t \geq 0}$ is an $\mathcal{F}_t$-martingale. We say that $M$ is a Gaussian martingale if, for every $y \in X_{\infty}$, the real-valued process $(M_t(y))_{t \geq 0}$ is a Gaussian process.
\end{definition}

Let $ M = (M_t)_{t \geq 0} $ be an $ X_{-\infty} $-valued $\mathcal{F}_t$-martingale with $ M_0 = 0 $. We are interested in the existence and uniqueness of solutions to the stochastic evolution equation:
\begin{equation}\label{eq:stochastic_evolution}
    d\xi_t = A' \xi_t \, dt + dM_t, \qquad \xi_0 = \gamma,
\end{equation}
where $ \gamma $ is an $ X_{-\infty} $-valued random variable.

\begin{definition}
A process $ \xi = (\xi_t)_{t \geq 0} $ is called an $ X_{-\infty} $-valued solution to equation~\eqref{eq:stochastic_evolution} if:
\begin{enumerate}
    \item $ \xi_t $ is $ X_{-\infty} $-valued, progressively measurable, and $\mathcal{F}_t$-adapted;
    \item For all $ y \in X_{-\infty} $ and $ t \geq 0 $,
    \begin{equation*}
    \xi_t(y) = \gamma(y) + \int_0^t \xi_s(Ay) \, ds + M_t(y).
    \end{equation*}
\end{enumerate}
\end{definition}

We denote by $D(T; W)$ the space of right-continuous processes with left limits (càdlàg processes) on $T$ with values in the space $W$. Also, for a stochastic process $Z=(Z_t)_{t\geq 0}$ and $t_0>0$, we denote by $Z^{t_0}=(Z_t)_{t\in [0,t_0]}$ the restriction of $Z$ to the interval $[0,t_0]$.

\begin{proposition}[Existence and Uniqueness]\label{prop:existence_uniqueness_spde}
Suppose the following hold:
\begin{enumerate}
    \item $ \gamma $ is an $ X_{-\infty} $-valued, $ \mathcal{F}_0 $-measurable random variable such that for some $ r_0 \in\mathbb{N} $, $ \mathbb{E}\|\gamma\|^2_{-r_0} < \infty $;
    \item $ M = (M_t)_{t \geq 0} $ is an $ X_{-\infty} $-valued martingale with $ M_0 = 0 $ and, for each $ y \in X_{-\infty} $, $ \mathbb{E}(M_t(y))^2 < \infty $;
    \item $ A $ is a continuous linear operator on $ X_\infty $ and is the infinitesimal generator of a $ (C_{0},1) $-semigroup $ \{S(t)\} $ on $ X_\infty $.
\end{enumerate}
Then, the stochastic evolution equation above admits a unique solution given by
\begin{equation*}
\xi_t = S'(t)\gamma + \int_0^t S'(t-s) \, dM_s.
\end{equation*}
Furthermore, $\xi \in D([0,\infty); X_{-\infty})$ almost surely, and for every $t_0>0$ there exists $p_{t_0}>0$ such that $\xi^{t_0} \in D([0,t_0]; X_{-p_{t_0}})$ almost surely, and 
\begin{equation*}
\mathbb{E}\left(\sup_{0\leq t\leq t_0}\|\xi_t\|_{X_{-p_{t_0}}}^2\right)<\infty.
\end{equation*}
\end{proposition}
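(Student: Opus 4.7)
The plan is to follow the classical mild-solution/Duhamel approach adapted to the countably Hilbert nuclear setting, essentially replicating the argument of Kallianpur--Perez-Abreu. First, I would extract from condition~(3) a dual semigroup growth estimate: for each $q\geq 0$ the $(C_{0},1)$ property provides $p\geq q$ and constants $M_q,\sigma_q>0$ with $\|S(t)x\|_q\leq M_q e^{\sigma_q t}\|x\|_p$ for $x\in X_\infty$, which by transposition yields $\|S'(t)y\|_{-p}\leq M_q e^{\sigma_q t}\|y\|_{-q}$ for $y\in X_{-q}$. In particular, for each fixed horizon $t_0>0$, after choosing $q=q_{t_0}$ large enough (in particular larger than $r_0$), one obtains a corresponding $p=p_{t_0}$ such that $S'(\cdot)$ is a strongly continuous family from $X_{-q}$ into $X_{-p}$ with uniform operator bound $M_q e^{\sigma_q t_0}$ on $[0,t_0]$.

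Next I would construct the solution candidate $\xi_t := S'(t)\gamma + \int_0^t S'(t-s)\,dM_s$. To define the stochastic convolution rigorously, I would invoke a regularization theorem for $X_{-\infty}$-valued square-integrable martingales (the nuclearity of $X_\infty$ together with hypothesis~(2) are precisely what guarantee this) in order to realize $M^{t_0}$ as a c\`adl\`ag martingale in a fixed Hilbert space $X_{-q}$ with $\mathbb{E}\sup_{t\leq t_0}\|M_t\|_{-q}^2<\infty$; then the convolution becomes a standard $X_{-p}$-valued stochastic integral because $S'(t-\cdot)$ is a uniformly bounded strongly measurable family from $X_{-q}$ to $X_{-p}$. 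Doob's $L^2$ maximal inequality applied to the martingale part, combined with the uniform semigroup bound on the deterministic part $S'(t)\gamma$ and the hypothesis $\mathbb{E}\|\gamma\|_{-r_0}^2<\infty$, will deliver almost sure c\`adl\`ag paths in $X_{-p_{t_0}}$ and the required moment inequality $\mathbb{E}\sup_{t\leq t_0}\|\xi_t\|_{-p_{t_0}}^2<\infty$.

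To verify that $\xi$ solves the weak equation I would fix $y\in X_\infty$ and exploit that $A$ generates $S(t)$ on $X_\infty$, so $s\mapsto S(t-s)y$ is $C^1$ with derivative $-AS(t-s)y$. A stochastic Fubini argument (whose integrability hypotheses follow from the estimates of the previous step) allows one to interchange the $ds$-integral with the $dM_u$-integral and telescope via the semigroup identity, yielding
\begin{equation*}
\int_0^t \xi_s(Ay)\,ds = \bigl(S'(t)\gamma(y)-\gamma(y)\bigr)+\Bigl(\int_0^t S'(t-s)\,dM_s(y)-M_t(y)\Bigr)=\xi_t(y)-\gamma(y)-M_t(y),
\end{equation*}
which is precisely the integral equation. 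Uniqueness would follow from the standard cancellation trick: given two solutions $\xi,\eta$ with $\zeta:=\xi-\eta$, fix $t$ and $y\in X_\infty$ and consider $g(s):=\zeta_s(S(t-s)y)$ on $[0,t]$; the contribution from the weak equation satisfied by $\zeta$ cancels against the contribution from differentiating $S(t-s)y$, so $g$ is constant and $\zeta_t(y)=\zeta_0(y)=0$ for every $y\in X_\infty$, hence $\zeta_t=0$.

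The main technical obstacle is the rigorous construction of the stochastic convolution as a process in a single Hilbert space $X_{-p_{t_0}}$ independent of $t$, together with the associated stochastic Fubini theorem. Hypothesis~(2) only provides a cylindrical second-moment bound, so one must pass from this pointwise-in-$y$ information to genuine c\`adl\`ag $L^2$-integrability of $M$ in some fixed $X_{-q}$. This is the content of the regularization lemma for nuclear-space valued martingales, and it is precisely at this point that the nuclearity of $X_\infty$ enters in an essential way. Once this reduction is in place, the rest of the argument reduces to the familiar semigroup/Duhamel machinery on separable Hilbert spaces.
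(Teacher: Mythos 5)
The paper itself gives no proof of this proposition: it appears in Appendix~\ref{app:spde_kallianpur} as a statement summarizing results of Perez-Abreu and Kallianpur \cite{kallianpur_1987}, so there is no internal argument to compare yours against. Your sketch reconstructs exactly the route of that source — transposed $(C_0,1)$ growth bounds giving $\|S'(t)y\|_{-p}\leq M_q e^{\sigma_q t}\|y\|_{-q}$, regularization of the $X_{-\infty}$-valued square-integrable martingale into a single Hilbert space $X_{-q}$ (the point where nuclearity is essential), construction of the stochastic convolution, a stochastic Fubini argument for the weak equation, and the cancellation trick $g(s)=\zeta_s(S(t-s)y)$ for uniqueness — so in substance it is the intended proof. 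One step is stated too quickly: Doob's $L^2$ maximal inequality does not apply directly to $\int_0^t S'(t-s)\,dM_s$, because the stochastic convolution is not a martingale (the integrand depends on $t$), so neither the sup-moment bound nor the c\`adl\`ag property in $X_{-p_{t_0}}$ follows from Doob alone. The standard repair, available here precisely because $A\in L(X_\infty)$ and $S$ is a $(C_0,1)$-semigroup, is to use stochastic Fubini to write
\begin{equation*}
\int_0^t S'(t-s)\,dM_s = M_t + \int_0^t A'S'(t-s)M_s\,ds ,
\end{equation*}
and then combine Doob applied to the regularized martingale $M$ with the uniform operator bounds on $A'S'(t-s)$ between the appropriate levels $X_{-q}\to X_{-p}$; with that adjustment (which you partially acknowledge as the ``main technical obstacle'') your plan is the correct and standard one.
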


\begin{remark}
Condition 2 is satisfied if $ \mathbb{E}(M_t(y))^2 = t Q(y, y) $, where $ Q $ is a positive definite, continuous bilinear form on $ X_{-\infty} \times X_{-\infty} $.
\end{remark}

\begin{proposition}[Gaussianity of the Solution]\label{prop:gaussianity_solution}
Let $M$ be a Gaussian martingale with independent increments. Also, let $ \gamma $ be a Gaussian random variable in $ X_{-\infty} $ such that $ \mathbb{E}\|\gamma\|^2_{-r_0} < \infty $ for some $ r_0 \in\mathbb{N} $, and assume that $ \gamma $ is independent of the martingale $M$. Then, the solution $ \xi = (\xi_t) $ is a Gaussian process in $ X_{-\infty} $.
\end{proposition}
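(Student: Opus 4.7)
The plan is to reduce the claim to showing that, for every finite collection of times $0\le t_{1}\le\cdots\le t_{k}$, test elements $y_{1},\ldots,y_{k}\in X_{\infty}$, and scalars $a_{1},\ldots,a_{k}\in\mathbb{R}$, the real random variable
$$
\Phi := \sum_{j=1}^{k} a_{j}\,\xi_{t_{j}}(y_{j})
$$
is Gaussian. Once this is established, the finite-dimensional distributions of $(\xi_{t}(y))_{t\ge 0,\,y\in X_{\infty}}$ are multivariate Gaussian, which is exactly the defining property for $\xi$ to be an $X_{-\infty}$-valued Gaussian process. So the first step will be to insert the explicit solution formula $\xi_{t}=S'(t)\gamma+\int_{0}^{t}S'(t-s)\,dM_{s}$ into $\Phi$ and, using \eqref{eq:dual_action}, write
$$
\Phi = \gamma\!\left(\sum_{j=1}^{k}a_{j}S(t_{j})y_{j}\right) + \sum_{j=1}^{k}a_{j}\int_{0}^{t_{j}}\!dM_{s}\bigl(S(t_{j}-s)y_{j}\bigr).
$$

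The first summand is a linear evaluation of the Gaussian element $\gamma$ and is therefore Gaussian. For the second summand, the key step is to verify that each stochastic integral $I_{j}:=\int_{0}^{t_{j}}dM_{s}(S(t_{j}-s)y_{j})$ is Gaussian, and moreover that the vector $(I_{1},\ldots,I_{k})$ is jointly Gaussian. I would do this by approximation: for a partition $0=s_{0}<s_{1}<\cdots<s_{N}=\max_j t_j$ of $[0,\max_j t_j]$, consider the Riemann sums
$$
I_{j}^{(N)} = \sum_{i:\,s_{i+1}\le t_{j}} \bigl[M_{s_{i+1}}-M_{s_{i}}\bigr]\!\bigl(S(t_{j}-s_{i})y_{j}\bigr).
$$
Each increment $M_{s_{i+1}}-M_{s_{i}}$ is a Gaussian element of $X_{-\infty}$ (since $M$ is Gaussian) and applied to the deterministic vector $S(t_{j}-s_{i})y_{j}$ produces a real Gaussian random variable; because $M$ has independent increments, the family of increments is independent, so $(I_{1}^{(N)},\ldots,I_{k}^{(N)})$ is a linear image of a Gaussian vector and is therefore jointly Gaussian.

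Next, I would pass to the limit. By construction of the stochastic integral (see Proposition \ref{prop:existence_uniqueness_spde} and the $L^2$-convergence argument underlying it), $I_{j}^{(N)}\to I_{j}$ in $L^{2}(\Omega)$ as the partition mesh tends to zero. Since $L^{2}$-limits of jointly Gaussian vectors are jointly Gaussian (because convergence in $L^{2}$ implies convergence in distribution, and the class of Gaussian laws on $\mathbb{R}^{k}$ is closed under weak limits), the vector $(I_{1},\ldots,I_{k})$ is Gaussian, and consequently so is $\sum_{j}a_{j}I_{j}$. Finally, because $\gamma$ is independent of $M$ (and hence of every $I_{j}$), the sum of the two Gaussian contributions is Gaussian, giving Gaussianity of $\Phi$.

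The only subtle point is the joint-Gaussianity step for the stochastic integrals, since the martingale $M$ takes values in an infinite-dimensional dual and a priori its Gaussian character is only required test-function by test-function. I would address this by noting that for each increment $M_{s_{i+1}}-M_{s_{i}}$ the linear map $y\mapsto (M_{s_{i+1}}-M_{s_{i}})(y)$ is a real centered Gaussian family on $X_{\infty}$ (apply L\'evy's characterization or the definition of $X_{-\infty}$-valued Gaussian martingale to each pair of test elements), so evaluations on any finite set of deterministic vectors produce a Gaussian vector; the independent-increments assumption then upgrades this to joint Gaussianity across different $i$'s, and this is precisely what is needed to close the argument.
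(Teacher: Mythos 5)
Your argument is essentially sound, but note that the paper does not actually prove this proposition: Appendix \ref{app:spde_kallianpur} is a review, and Propositions \ref{prop:existence_uniqueness_spde} and \ref{prop:gaussianity_solution} are imported from Perez-Abreu and Kallianpur \cite{kallianpur_1987}, so there is no in-paper proof to compare against. Your route --- reduce to finite-dimensional distributions, split the mild solution into the initial term $\gamma(S(t_j)y_j)$ and the stochastic convolution, approximate the convolution by Riemann sums over increments of $M$, use independent increments for joint Gaussianity of the sums, and pass to the limit --- is the standard argument and essentially the one underlying the cited reference, with independence of $\gamma$ and $M$ closing the proof.

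Two points deserve sharpening. First, the paper's definition of a Gaussian martingale only requires $(M_t(y))_{t\geq 0}$ to be Gaussian for each \emph{fixed} $y$; the joint Gaussianity of one increment evaluated at several test elements does not follow from applying the definition ``to each pair of test elements,'' nor from L\'evy's characterization. The correct mechanism is linearity of $M_t\in X_{-\infty}$ in the test element: any linear combination $\sum_l c_l\,[M_{s_{i+1}}-M_{s_i}](z_l)$ equals $[M_{s_{i+1}}-M_{s_i}]\bigl(\sum_l c_l z_l\bigr)$, which is Gaussian by the definition applied to the single element $\sum_l c_l z_l$, and Cram\'er--Wold then gives joint Gaussianity of the increment's evaluations; independence of increments upgrades this to joint Gaussianity across different $i$. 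The same linearity argument is what makes $\gamma(w)$ Gaussian for $w=\sum_j a_j S(t_j)y_j$. Second, your limit step presupposes that $\int_0^{t}S'(t-s)\,dM_s$ is the $L^2$ (or in-probability) limit of exactly these Riemann sums; this is part of the construction in \cite{kallianpur_1987}, using continuity of $s\mapsto S(t-s)y$ in $X_\infty$ for a $(C_0,1)$-semigroup and square-integrability of $M_t(y)$, and either mode of convergence preserves Gaussianity, so the step is fine once you cite it explicitly rather than gesturing at ``the construction.''
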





\bibliography{sn-bibliography}





\end{document}